\documentclass[12pt,a4paper]{amsart}
\usepackage{amsmath,amsfonts,latexsym,amssymb,amsthm,pictexwd,dcpic}
\usepackage{latexsym}
\usepackage[pdftex]{graphicx}
\usepackage{epstopdf}
\usepackage{hyperref}
\usepackage{color}

\pagestyle{plain}
\makeatletter
\makeatother


\newcommand{\field}[1]{\mathbb{#1}}
\newcommand{\rz}{\field{R}}

\newcommand{\zz}{\field{Z}}
\newcommand{\nz}{\field{N}}

\newcommand{\I}{\mathrm{i}}

\newcommand{\arccosh}{\mathrm{arccosh}}
\newcommand{\arccot}{\mathrm{arccot}}


 \newtheorem{theorem}{Theorem}[section]
 
 \newtheorem{example}[theorem]{Example}
 \newtheorem{lemma}[theorem]{Lemma}
 \newtheorem{corollary}[theorem]{Corollary}

 \newtheorem{remark}[theorem]{Remark}



\linespread{1.3}
\addtolength{\oddsidemargin}{-15mm}
\addtolength{\evensidemargin}{-15mm}
\addtolength{\textwidth}{30mm}
\setlength{\topmargin}{5mm}
\setlength{\headsep}{0pt}
\addtolength{\textheight}{22mm}
\addtolength{\footskip}{5mm}


\DeclareMathOperator{\tr}{tr}

\newcommand{\genus}{\textup{g}}



\title{An Algorithm for the Computation of Eigenvalues, Spectral Zeta Functions and Zeta-Determinants on Hyperbolic Surfaces
}

\author[A. Strohmaier]{Alexander Strohmaier}
\address{Department of Mathematical Sciences,  Loughborough University,  Loughborough, Leicestershire, LE11 3TU,
UK} \email{a.strohmaier@lboro.ac.uk}

\author[V. Uski]{Ville Uski}
\address{Department of Mathematical Sciences,  Loughborough University,  Loughborough, Leicestershire, LE11 3TU,
UK}
\email{V.Uski@lboro.ac.uk}

\thanks{This work was supported by the Leverhulm grant F/00 261/Z}

\begin{document}

\maketitle

\begin{abstract}
 We present a rigorous scheme that makes it possible to compute eigenvalues 
 of the Laplace operator on hyperbolic surfaces within a given precision. The method is based
 on an adaptation of the method of particular solutions to the case of locally symmetric spaces and on
 explicit estimates for the approximation of eigenfunctions on hyperbolic surfaces by certain basis functions.
  It can be applied to check whether or not there is an eigenvalue in an $\epsilon$-neighborhood of a given number
 $\lambda>0$.
 This makes it possible to find all the eigenvalues in a specified interval, up to a given precision with rigorous
 error estimates. The method converges exponentially fast with the number of basis functions used.
 Combining the knowledge of the eigenvalues with the Selberg trace formula we are able to compute
 values and derivatives of the spectral zeta function again with error bounds. As an example we calculate the spectral determinant
 and the Casimir energy of the Bolza surface and other surfaces.
 \end{abstract}

\section{Introduction}

Let $(M,g)$ be a compact Riemannian manifold of dimension $n$ and let $\Delta$ be the (positive) Laplace operator
on functions, which in local coordinates is given by
$$
 \Delta = - \sum_{i,k=1}^n \frac{1}{\sqrt{|g|}} \frac{\partial}{\partial x_i} \sqrt{|g|} g^{ik} \frac{\partial}{\partial x_k}.
$$
Here $|g|$ denotes the determinant of the metric tensor and  $g^{ik}$ are the components of the dual metric on the cotangent bundle.
This operator is formally self-adjoint on the space of smooth functions with inner product
$$
 \langle f_1, f_2 \rangle = \int_M f_1(x) \overline{f_2}(x) \sqrt{|g|} dx_1, \cdots dx_n
$$
defined by the Riemannian measure $\sqrt{|g|} dx_1 \cdots dx_n$.
Then $\Delta$ extends to a self-adjoint operator $L^2(M) \supset H^2(M) \to L^2(M)$
with compact resolvent. This means there exists an orthonormal basis $\{\phi_j \mid j \in \mathbb{N}_0\}$
in $L^2(M)$ consisting of eigenfunctions
$$
 \Delta \phi_j = \lambda_j \phi_j
$$
which we assume to be ordered such that $0=\lambda_0 \leq \lambda_1 \leq \lambda_2 \leq \ldots$.

It is a classical problem to compute the eigenvalues and the eigenfunctions of the Laplace operator
on a given manifold. Its solution allows complete control over the functional calculus and thus over solutions
to the heat equation, the wave equation, or the Schr\"odinger equation.

In this paper we show how an adaptation of the method of particular solutions can be used to compute eigenvalues
on manifolds with a high accuracy, beyond of what can currently be achieved by finite element methods
or boundary element methods.
We will focus primarily on the case of 
two dimensional oriented surfaces of constant curvature. These are the simplest examples of manifolds with
non-trivial spectral geometry. 
They are topologically classified by their genus. By the Gauss-Bonnet theorem the curvature $\kappa$ 
of a surface of genus $\genus$ can always be normalized 
to be $\kappa=\mathrm{sign}(\genus-1)$ by multiplying the metric by a positive constant.
This divides oriented surfaces of constant curvature into three categories
\begin{itemize}
 \item $\kappa=+1, \genus=0$: up to isometries there is only one such manifold: the round sphere $S^2$. The
 spectrum of the Laplace operator on the round sphere is of course well known and the spherical harmonics provide a
 basis consisting of eigenfunctions.
 \item $\kappa=0, \genus=1$: such manifolds are isometric to flat tori which are obtained as quotients of $\mathbb{R}^2$ by 
  co-compact lattices. The moduli space of equivalence classes of flat metrics on a given topological torus
  is the modular surface $\mathrm{SL}(2,\mathbb{Z})\backslash \mathbb{H}$, where 
  $\mathbb{H}=\{x+\mathrm{i} y \in \mathbb{C} \mid y>0\}$ is the upper half space. For a surface $\mathbb{R}^2/L$
  a basis consisting of eigenfunctions is obtained by taking the Fourier modes associated with points in the dual lattice $L^*$
  and the corresponding eigenvalues are the squares of the norms of these points.
 \item $\kappa=-1, \genus>1$: these are the so called hyperbolic surfaces. They can be obtained either as quotients
 $\Gamma \backslash \mathbb{H}$ by co-compact hyperbolic lattices in $\mathrm{SL}(2,\mathbb{R})$ or by
 glueing hyperbolic pairs of pants. The moduli space of hyperbolic metrics on a given topological surface
 is a quotient of the Teichm\"uller space $\mathcal{T}_{\genus}$ by a discrete group, the mapping class group.
 The Teichm\"uller space for a genus $\genus$ surface has dimension $6 \genus -6$ so that a constant curvature metric
 on a topological surface may be given by specifying $6 \genus -6$ parameters.
 Unlike in the previous two cases the spectrum of the Laplace operator of a hyperbolic surface can not currently
 be explicitly computed.
\end{itemize}

The main purpose of this article is to establish an algorithm to compute the eigenvalues and eigenfunctions
of the Laplace operator on a hyperbolic surface up to a certain precision and with mathematically rigorous error bounds.
Once this is achieved for a number of eigenvalues it is then possible to compute values of the spectral zeta function
and the spectral determinant of the Laplace operator on such surfaces. We demonstrate how this can be done using the Selberg
trace formula in such a way
that the error can again be bounded rigorously.

The method we use is adapted to a pants decomposition of the surface. For a particular
basis of functions we establish various bounds and estimates that prove that the method of particular solutions
can be applied to an interval and yields all eigenvalues in that interval. In particular we prove a novel estimate that allows us
to show non-existence of eigenvalues in a certain interval.

\subsection{Organization of the article and results}
In section \ref{mps} we describe how an adaptation of the method of particular solutions can be applied to manifolds.
The method uses a finite dimensional space of test functions that solve the eigenvalue equation with eigenvalue $\lambda$ 
on open submanifolds that are glued together along co-dimension one hypersurfaces. The  glueing conditions for the eigenfunctions,
continuity of the eigenfunctions and their normal derivatives, is measured in terms of the differences of the function
values and normal derivatives along the hypersurfaces. For a test function $\phi$ we introduce in Equation (\ref{functionaldef}) 
the number $F_{-1/2,-3/2}(\phi)$ that measures these differences in suitable Sobolev norms.
We prove that if $\| \phi \|_{L^2(M)}=1$ and $F_{-1/2,-3/2}(\phi)$ is small then $\lambda$ must be close to an eigenvalue. 
Our Theorem  \ref{main01} together with the subsequent estimates on the constants gives a quantitative version of this statement
for hyperbolic surfaces that are glued along geodesic segments.

In section \ref{sec:surfaces} we review the construction of hyperbolic surfaces from pairs of pants.
This gives an explicit parametrization of Teichm\"uller space in terms of Fenchel-Nielsen coordinates and shows 
that hyperbolic surfaces can be cut open along geodesic segments 
into pairs of pants and subsequently into subsets of hyperbolic cylinders.

In section \ref{sec:MPS_on_surface} we describe a set of basis functions for a surface of genus $\genus$
that is decomposed into pairs of pants. We prove that true eigenfunctions can be approximated by linear combinations
of this set of basis functions with explicit bounds on the error. Section \ref{algo}  deals with the construction of 
$m \times k$ matrices $\mathbf{B}^0_\lambda$, $\mathbf{B}_\lambda$ and $\mathbf{C}_\lambda$ with the following properties. 
\begin{enumerate}
\item
 The distance of $\lambda$ to the spectrum can be bounded from above in terms of the first singular value
 $\sigma_1(\mathbf{B}^0_\lambda)$ of $\mathbf{B}^0_\lambda$ and its singular vector (Theorem \ref{mainm01}).
\item The smallest relative singular value $\sigma_1(\mathbf{B}_\lambda,\mathbf{C}_\lambda)$ is bounded from above by
$$c_1(k,\lambda) + c_2(\lambda) \mathrm{dist}(\mathrm{spec}(\Delta),\lambda),$$  where $c_1(k,\lambda)$ and $c_2(\lambda)$
are explicitly computable constants and $c_1$ is exponentially decaying in $k$ (Theorem \ref{maindist}).
\end{enumerate}
Whereas the first property allows to prove that an eigenvalue is in a certain interval, the second property
allows to determine intervals in which there are no eigenvalues. Both estimates together
can be used to find all eigenvalues in a specified interval.
We demonstrate that these matrices can be computed within a given precision and show that the singular values can be
bounded from above and below using interval arithmetics.
As a proof of concept we implemented our method in Fortran and in Mathematica. This resulted in programs that allow
to compute eigenvalues rather accurately for a surface of genus $\genus$ with given Fenchel-Nielsen
coordinates.
A Mathematica program was used to compute the first eigenvalues of the Bolza surface
with extremely high accuracy.

The spectral zeta function $\zeta_{\Delta}(s)$ is defined as the meromorphic continuation of the function
$$
 \zeta_{\Delta}(s)=\sum_{i=1}^\infty \lambda_i^{-s},
$$
where $(\lambda_i)_{i \in \nz}$ are the non-zero eigenvalues of the Laplace operator repeated according to their multiplicities.
The (zeta-regularized) spectral determinant $\mathrm{det}_\zeta(\Delta)$ is defined by
$\log \mathrm{det}_\zeta(\Delta)= -\zeta'_{\Delta}(0)$ (zero is not a pole of the meromorphic continuation).
Since the meromorphic continuation is contructed from the full spectrum it is a priori not enough to
know a finite part of the spectrum to compute the spectral determinant up to a certain accuracy.
However, we show in section \ref{zeta} that the Selberg trace formula may be used in conjunction with the list of eigenvalues
up to a certain threshold to calculate values of the spectral zeta function, in particular the Casimir energy and the spectral determinant,
up to a certain precision depending on that threshold.
The spectral determinant 
for the Laplace operator on hyperbolic surfaces is of particular importance.
In the seminal paper \cite{MR960228} Osgood, Phillips and Sarnak showed that the spectral determinant is maximized
at the hyperbolic metric in each conformal class of a given volume. Thus, understanding the extremal properties of the
determinant as a function on the space of metrics of fixed volume in dimension two is equivalent to the understanding
of the spectral determinant as a function on the Teichm\"uller space of hyperbolic metrics. Of course values of the determinant
for non-hyperbolic surfaces can be computed  from the value for the corresponding uniformized
hyperbolic surface using the Polyakov formula.

Finally, section \ref{zetaexa} contains of collection of examples of interesting surfaces of genus two and three 
for which we computed the spectral determinant and the value of the spectral zeta function at the point $-1/2$.
The set of examples contains the isolated surfaces of genus two with large symmetry group which are well known to be critical points
for any value of the spectral zeta function or the spectral determinant (see e.g. \cite{MR2452638}).
We conjecture that the value of the spectral
determinant is maximized at the Bolza surface and we compute the
value of the spectral determinant rather accurately. Such a conjecture would imply that the global maximum
of the spectral determinant as a function on the space of all metrics of fixed volume is attained at this point.
The appendix contains explicit estimate for various resolvents as well as explicit estimates of the
$L^\infty$ norm of eigenfunctions and derivatives of eigenfunctions. These estimates are needed to make the constants
in our estimates explicit but they may also be interesting in their own right.

\subsection{Discussion} 
 Eigenvalues of hyperbolic surfaces of genus two have been calculated in the physics literature by Aurich and Steiner
 in \cite{Aurich:1989} using the finite element method, and in \cite{MR1214552} using the boundary element method.
 In both cases the authors relied on the realization of the surface by geodesic octagons.
 It is quite interesting that the Hadamard-Gutzwiller model discussed in \cite{Aurich:1989}
 is actually the same as the Bolza surface. Using the group action and the decomposition into geodesics
 triangles Ninnemann \cite{Ninnemann:1995} computed the eigenvalues of the regular geodesic octagon
 with every second side identified. The resulting surface, which he also refers to as the Gutzwiller octagon,
 does however not coincide with the Bolza surface. Its Fenchel Nielsen coordinates are given in Section
 \ref{regoct}.
 There as well as in other parts of the physics literature the Bolza surface is referred to as the regular octagon.
 
 The method of particular solutions is based on an article by Fox, Henrici and Moler
 \cite{Fox:1967}. It was subsequently further developed and revived by Betcke and Trefethen \cite{Betcke:2005}
 to achieve high accuracy in eigenvalue computations on domains with Dirichlet boundary conditions.
 Further versions, including
 domain decompositions and matching of boundary data including the normal derivative were described in
 \cite{MR2652084} and \cite{MR2337576}. 
 Explicit error bounds for the eigenvalues in the method of particular solutions for domains in 
 $\mathbb{R}^n$ were given in \cite{1968} and further improved 
 in by Alex Barnett \cite{MR2519590}. We would also like to refer to the latter article for further background
 and literature on the method of particular solutions. Our estimate for the error bound does not use 
 \cite{1968} but is instead based on the more refined information contained in the resolvent of the Laplace operator
 on the manifold. 
 Thus, for generic eigenfunctions it is expected to give an improvement of the order of the square root of the eigenvalues.
 Estimates that provide such an improvement without this genericity assumption have recently been obtained
 for domains in $\rz^n$
 in \cite{MR2519590} and \cite{MR2812557}.

 Finally, for hyperbolic surfaces with cusps Hejhal (\cite{Hejhal:92AMS} and \cite{Hejhal:99}) introduced
 a  method to compute Maass cusp forms which is based on the group action and
 the expansion of the embedded eigenvalues on the cusp. After most of the work in this paper was completed we
 learned that Booker, Str\"ombergsson and Venkatesh \cite{MR2249995} have recently used
 the pre-trace formula together with the Taylor expansion of boundary data of quasi-modes
 to rigorously verify embedded eigenvalues for the modular surface $SL(2,\mathbb{R})\backslash \mathbb{H}$.
 Their way of certifying eigenvalues is in spirit similar to the method we use to prove that our computed 
 eigenvalues are accurate within the error bounds. Apart from the fact that their method applies to a different
 geometric situation (surfaces with at least one cusp) the method of bounding the error
 is different when it comes to technical detail: for example in \cite{MR2249995} it is estimated in terms of the $L^\infty$-norm
 of the boundary data instead of the $L^2$-norm.
 It would be very interesting in the future to combine these ideas. In particular a modification of our estimates
 applies to hyperbolic surfaces with cusps. The problem of adjusting the step size in the search for Maass cusp forms
 on $SL(2,\mathbb{Z})\backslash \mathbb{H}$ in such a way that
 no eigenvalues are missed could be tackled in this way.
 
 Formulae for the spectral determinant based on the length spectrum were given in the mathematics literature by 
 Fried \cite{MR837526} and by  Pollicott and Rocha \cite{MR1474163}, and in the physics literature by Aurich and Steiner
 \cite{MR0888533,AS:1992}.
In \cite{MR1474163}, exponential convergence was proved
and a numerical algorithm was established for the case of surfaces of genus $2$ in $mw$-Fenchel Nielsen
coordinates without twisting. We are not able to confirm the numerical values obtained
in \cite{MR1474163} in the three examples there but obtain quite different values. We believe that the part of the
length spectrum computed in \cite{MR1474163} was not sufficient to obtain the correct values.
In contrast to the other methods employed our approach to compute the spectral determinant and 
the values for the zeta function allows for explicit error
estimates even if the length spectrum is unknown.

\section{The method of particular solutions on manifolds} \label{mps}

The method of particular solutions is a method to approximate eigenvalues and eigenfunctions
of a differential operator. We will consider here the case of the Laplace operator 
$\Delta: C^\infty(M) \to C^\infty(M)$ acting on functions on a compact oriented Riemannian manifold $M$.

To fix notations suppose that $\Gamma \subset M$ is a closed subset which is the finite
union of oriented compact codimension one submanifolds $\Gamma_\alpha$, possibly with boundary.
We will assume here for simplicity that the $\Gamma_\alpha$ intersect at most at boundary points, i.e.
$\Gamma_\alpha \cap \Gamma_\beta \subset \partial \Gamma_\alpha \cap \partial \Gamma_\beta$.
Removing $\Gamma$ from $M$ results in an open manifold $M \backslash \Gamma$ that is the 
interior of a manifold with piecewise smooth
boundary. This manifold might however have corners or other singularities. 
We would like to define the class of functions that are "smooth up to the boundary" on 
$M \backslash \Gamma$. In order to do this let us be more precise about how the manifold with piecewise
smooth boundary is constructed from $M$ and $\Gamma$.
The space $M \backslash \Gamma$ equipped with the geodesic distance is not complete and we denote
by $\overline{M \backslash \Gamma}$ its abstract metric completion. Since $M$ is complete
there is a natural continuous surjection $\pi: \overline{M \backslash \Gamma} \to M$. 
We define a function $f \in C(\overline{M \backslash \Gamma})$ to be smooth if and only if
for every point $x \in \overline{M \backslash \Gamma}$ there exists an open neighborhood
$\mathcal{U}$ such that there exists a smooth function $g$ on $M$ with $f|_\mathcal{U}=\pi^*(g)|_\mathcal{U}$.
The corresponding sheaf of smooth functions endows $\overline{M \backslash \Gamma}$ with the structure
of a smooth manifold with piecewise smooth boundary.
We write $C^\infty(\overline{M \backslash \Gamma})$ for the space of smooth functions
on $\overline{M \backslash \Gamma}$.

Of course an element $\phi \in C^\infty(\overline{M \backslash \Gamma})$ can be understood as a function in
$L^\infty(M)$ as the boundary of $\overline{M \backslash \Gamma}$ has zero measure and the interior
of $\overline{M \backslash \Gamma}$ can be identified with $M \backslash \Gamma$.
We will now make this identification without further mention. 
Since each $\Gamma_\alpha$ is oriented there is a natural unit normal vector field $n_\alpha$ to $\Gamma_\alpha$
and a closed tubular neighborhood of $\Gamma_\alpha$ diffeomorphic to $\Gamma_\alpha \times [-\epsilon,\epsilon]$.
A function $\phi \in C^\infty(\overline{M \backslash \Gamma})$ therefore has two boundary values on $\Gamma_\alpha$,
a right boundary value $\phi^+_\alpha$ and a left boundary value $\phi^-_\alpha$. Both are smooth functions on $\Gamma_\alpha$.
Of course the function $\phi\in C^\infty(\overline{M \backslash \Gamma})$ is continuous on $M$ if and only if for all indices $\alpha$
$$
  \forall x \in \Gamma_{\alpha}: \qquad \phi^+_\alpha(x)=\phi^-_\alpha(x).
$$
Similarly,  $\phi \in C^1(M)$ if and only if for all $\alpha$
\begin{gather*}
 \forall x \in \Gamma_{\alpha}: \qquad \phi^+_\alpha(x)=\phi^-_\alpha(x),\quad \textrm{and}\\
 \forall x \in \Gamma_{\alpha}: \qquad (n_\alpha \phi)^+(x)= (n_\alpha \phi)^-(x).
\end{gather*}
Here, $(n_\alpha \phi)^\pm$ denotes the right and left limits of the normal derivative of $\phi$.

Let us define $D_{\alpha} \phi$ to be the function on each manifold $\Gamma_\alpha$ given by 
$\phi^+_\alpha-\phi^-_\alpha$ and let $D_{n_\alpha} \phi$ be the function given by
$(n_\alpha \phi)^+ - (n_\alpha \phi)^-$. Let $D \phi$ and $D_n \phi$
be the corresponding functions in $L^\infty(\Gamma)$. As the 
boundary of $\Gamma$ has zero measure in $\Gamma$ these functions are well defined and they are smooth
on each $\Gamma_\alpha$.
 
Let $s,t \leq 0$. The functional $F^\Gamma_{s,t}(\phi)$ defined by
\begin{gather} \label{functionaldef}
 F^\Gamma_{s,t}(\phi):=(\Vert D  \phi \Vert_{H^s(\Gamma)}^2 + \Vert D_n \phi \Vert_{H^t(\Gamma)}^2)^{1/2},
\end{gather}
measures the continuity of the function and its normal derivatives in different Sobolev
norms.
Here the $H^s$ norm on $\Gamma$ is defined as
$$
 \Vert f \Vert_{H^s(\Gamma)}^2 = \sum_{\alpha} \Vert f_\alpha \Vert_{H^s(\Gamma_\alpha)}^2.
$$

Let now as above $\phi \in C^\infty(\overline{M \backslash \Gamma})$ and let 
$\chi \in C^\infty(\overline{M \backslash \Gamma})$ be defined by $\chi(x)=(\Delta-\lambda) \phi(x)$
for $x \in M \backslash \Gamma$.
Suppose that $f \in C^\infty(M)$ is any test function. Then, by Green's formula (see e.g. Prop. 4.1 in \cite{MR1395148}),
\begin{gather*}
 \int_M \phi ((\Delta-\lambda) f)(x) d\mu(x) = \\ = \int_M  ((\Delta-\lambda)\phi)(x) f(x) d\mu(x) +
 \int_{\partial \overline{M \backslash \Gamma}} \left( \mathbf{n} \phi(x) f(x) -   \phi(x) \mathbf{n} f(x) \right) d\nu(x) = \\=
 \int_M  \chi(x) f(x) d\mu(x) + \int_\Gamma (D_n \phi)(x) f(x) d\nu_\Gamma(x) - \int_\Gamma (D \phi)(x) (\mathbf{n} f)(x) d\nu_\Gamma(x),
\end{gather*}
where $\mathbf{n}$ denotes the unit normal vector field along $\Gamma$, $d\mu(x) = \sqrt{g} dx$ is the measure
induced by the Riemannian metric on $M$, and $d\nu(x),d\nu_\Gamma(x)$ are the measures on 
$\partial \overline{M \backslash \Gamma}$ and $\Gamma$ induced by the Riemannian metrics there.

If we think of $\phi$ as a distribution on $M$, then the above simply means that we have in the distributional sense
 $$
  (\Delta-\lambda) \phi = \chi + D \phi \; \delta'_\Gamma  +  D_n \phi \; \delta_\Gamma ,
 $$
 where $\delta_\Gamma$ is the Dirac delta distribution on $\Gamma$ 
 and $\delta'_\Gamma=\mathbf{n} \delta_\Gamma$ is its normal derivative so that for a test function
 $f \in C^\infty(M)$ the above distributions are given by
 $$
   \left( D_n \phi\;\delta_\Gamma \right) (f) = \int_\Gamma (D_n \phi)(x) f(x) d\nu_\Gamma(x)
 $$
 and
 $$
 \left( D \phi (\delta'_\Gamma) \right)(f) = -\int_\Gamma (D \phi)(x) (\mathbf{n} f)(x)  d\nu_\Gamma(x).
 $$
 Note that the distribution 
 $$
  h = D \phi \;\delta'_\Gamma  +  D_n \phi \; \delta_\Gamma
 $$
 is in $H^{-2}(M)$. 
 Denote by $\tilde F^\Gamma(\phi)$ its $H^{-2}$-norm, where the $H^s$-norm is defined by
 $\Vert \psi \Vert_{H^s(M)}=\Vert (1+\Delta)^{s/2} \psi \Vert_2$.
 
By the trace theorem for Sobolev spaces (see e.g. Prop. 4.5 in \cite{MR1395148}) the maps
$f \mapsto f|_\Gamma$ and $f \mapsto (\mathbf{n} f)|_\Gamma$ extend to bounded maps from $H^2(M) \to H^{3/2}(\Gamma)$
and $H^2(M) \to H^{1/2}(\Gamma)$ respectively. Hence, by duality,
 $$
  \Vert D \phi \; \delta'_\Gamma\Vert_{H^{-2}(M)} \leq \tilde C \Vert D \phi \Vert_{H^{-\frac{1}{2}}(\Gamma)}
 $$
 and
 $$
  \Vert D_n \phi \; \delta_\Gamma \Vert_{H^{-2}(M)} \leq \tilde C' \Vert D_n \phi \Vert_{H^{-\frac{3}{2}}(\Gamma)},
 $$
 for some constants $\tilde C,\tilde C' >0$. 
 Thus, there exists a constant $C>0$
 depending only on the geometry of $M$ and $\Gamma$ such that
 \begin{gather} \label{const}
  \tilde F^\Gamma(\phi) \leq C \cdot F^\Gamma_{-\frac{1}{2},-\frac{3}{2}}(\phi).
 \end{gather}
 A constant $C$ in this estimate can be explicitly found for a given manifold, for example by using gluing functions
 and the Fourier transform. We will illustrate in section \ref{sec:MPS_on_surface}
 how to get a bound on $C$ for a given hyperbolic surface when $\Gamma$ consists of geodesic segments 
 (cf. (\ref{sbound1}) and (\ref{sbound2})).
 
The method of particular solutions relies on the following simple observation.
\begin{theorem} \label{main01}
 Suppose that $\phi \in C^\infty(\overline{M \backslash \Gamma}) \subset L^2(M)$ with $\Vert \phi \Vert_2 = 1$ such that  
 \begin{gather*}
   \forall x \in M \backslash \Gamma: \quad (\Delta - \lambda) \phi(x) = \chi(x),\\
   \chi \in L^2(M), \quad 0 \leq \Vert \chi \Vert \leq \eta.
 \end{gather*}
 If $\tilde F^\Gamma (\phi)< \epsilon <1$, then
  there is an eigenvalue of $\Delta$ in the interval 
 $$
  [\lambda - \frac{(1+\lambda)\epsilon+\eta}{1-\epsilon},\lambda + \frac{(1+\lambda)\epsilon+\eta}{1-\epsilon}].
 $$
\end{theorem}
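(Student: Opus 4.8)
The plan is to run the standard resolvent argument, exploiting the distributional identity $(\Delta-\lambda)\phi = \chi + h$ established just above the statement, where $h = D\phi\,\delta'_\Gamma + D_n\phi\,\delta_\Gamma$ satisfies $\Vert h \Vert_{H^{-2}(M)} = \tilde F^\Gamma(\phi) < \epsilon$. First I would dispose of the trivial case: if $\lambda \in \mathrm{spec}(\Delta)$ then $\lambda$ itself lies in the asserted interval and there is nothing to prove, so assume $\lambda \notin \mathrm{spec}(\Delta)$ and set $d := \mathrm{dist}(\lambda,\mathrm{spec}(\Delta))>0$. Then $\Delta-\lambda$ is a bijection $L^2(M)\to H^{-2}(M)$ whose inverse, in the orthonormal eigenbasis $\{\phi_j\}$, acts as multiplication by $(\lambda_j-\lambda)^{-1}$ on the $j$-th eigenspace. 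Since $\chi\in L^2(M)\subset H^{-2}(M)$ and $h\in H^{-2}(M)$, applying $(\Delta-\lambda)^{-1}$ to both sides of the distributional identity gives $\phi = (\Delta-\lambda)^{-1}\chi + (\Delta-\lambda)^{-1}h$ in $L^2(M)$.

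Next I would estimate the two terms separately, and this is where the only genuine bookkeeping lies, since $\chi$ is controlled in $L^2$ but $h$ only in $H^{-2}$. For the first term the spectral theorem gives $\Vert (\Delta-\lambda)^{-1}\chi\Vert_2 \leq d^{-1}\Vert\chi\Vert_2 \leq \eta/d$. For the second, write $h=\sum_j c_j\phi_j$, so that $\Vert h\Vert_{H^{-2}}^2 = \sum_j |c_j|^2(1+\lambda_j)^{-2} < \epsilon^2$, and
\[
 \Vert (\Delta-\lambda)^{-1}h\Vert_2^2 = \sum_j \frac{|c_j|^2}{(\lambda_j-\lambda)^2}
 = \sum_j \frac{|c_j|^2}{(1+\lambda_j)^2}\cdot\frac{(1+\lambda_j)^2}{(\lambda_j-\lambda)^2}
 \leq \epsilon^2 \sup_j \frac{(1+\lambda_j)^2}{(\lambda_j-\lambda)^2}.
\]
The elementary inequality $1+\lambda_j \leq (1+\lambda) + |\lambda_j-\lambda|$ together with $|\lambda_j-\lambda|\geq d$ yields $\frac{1+\lambda_j}{|\lambda_j-\lambda|} \leq 1 + \frac{1+\lambda}{d}$ for every $j$, whence $\Vert (\Delta-\lambda)^{-1}h\Vert_2 \leq \epsilon\left(1 + \frac{1+\lambda}{d}\right)$.

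Finally I would combine the two estimates with $\Vert\phi\Vert_2 = 1$, obtaining $1 \leq \frac{\eta}{d} + \epsilon + \frac{(1+\lambda)\epsilon}{d}$, that is $1-\epsilon \leq \frac{(1+\lambda)\epsilon+\eta}{d}$. Since $\epsilon<1$ this rearranges to $d \leq \frac{(1+\lambda)\epsilon+\eta}{1-\epsilon}$, and as $d$ is by definition the distance from $\lambda$ to the spectrum, there must be an eigenvalue of $\Delta$ within that distance of $\lambda$, which is exactly the claimed interval. The argument is essentially routine once the distributional reformulation is in place; the only point requiring a little care is keeping the two norms ($L^2$ for $\chi$, $H^{-2}$ for $h$) straight and deriving the weighted bound $\frac{1+\lambda_j}{|\lambda_j-\lambda|}\leq 1 + (1+\lambda)/d$, which is precisely what produces the factor $(1+\lambda)$ multiplying $\epsilon$ in the numerator.
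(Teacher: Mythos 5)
Your proof is correct and yields exactly the paper's bound by essentially the same mechanism: your scalar inequality $\frac{1+\lambda_j}{|\lambda_j-\lambda|}\le 1+\frac{1+\lambda}{d}$ is precisely the diagonalized form of the resolvent identity that the paper exploits by setting $g=(\Delta+1)^{-1}h$ and computing $(\Delta-\lambda)(\phi-g)=\chi+(1+\lambda)g$, then invoking the lower bound on the resolvent norm. The only difference is presentational — you work in the eigenbasis of $\Delta$ while the paper argues with operator norms — so nothing of substance changes.
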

\begin{proof}
 As before let 
 $$
  h = D \phi \; \delta'_\Gamma  +  D_n \phi \; \delta_\Gamma
 $$
 Then $g=(\Delta+1)^{-1} h \in L^2(M)$ and by our assumptions
  $$
  \Vert g \Vert_2 \leq \epsilon.
 $$
 Now note that
 $$
  (\Delta-\lambda)(\phi-g) = \chi + (1+\lambda) g.
 $$
 Of course
 $$
  \Vert \chi + (1+\lambda) g \Vert_2 \leq \eta + (1+\lambda) \Vert g\Vert_2
 $$
 and
 $$
  \Vert \phi-g \Vert_2 \geq 1- \Vert g \Vert_2
 $$
 as $\Vert \phi \Vert_2=1$.
 From this it immediately follows that in case $(\Delta-\lambda)^{-1}$ exists its operator norm
 is bounded from below by
 $$
  \frac{1- \Vert g \Vert_2}{\eta + (1+\lambda) \Vert g\Vert_2}.
 $$
 This implies the theorem since the operator norm of the resolvent at the point $\lambda$ is bounded from above by
 the inverse of the distance of $\lambda$ to the spectrum.
\end{proof}

\begin{remark} \label{remrem}
 The statement of the previous theorem also applies to situations where eigenvalues might be close
 to each other or have high multiplicities. Not to overload notation we state this here only as a remark.
 If there is an orthonormal set $(\phi_i)_{i=1\ldots,k}$ in $L^2(M)$ such that for each of the $\phi$
 the conditions of Theorem \ref{main01} hold, then the same proof shows that there are
 at least $k$ eigenvalues (counting multiplicities) in a small interval around $\lambda$.
\end{remark}

To apply the method of particular solutions one constructs normalized functions $\phi$ which are eigenfunctions
of the Laplace operator on each of the $M_i$ with eigenvalue $\lambda$ and for which  $F^\Gamma_{-\frac{1}{2},-\frac{3}{2}}(\phi)$ is small.
The above theorems then show that $\lambda$ must be close to an eigenvalue and $\phi$
is close to an eigenfunction.
We will show in the following how this can be done for oriented hyperbolic surfaces of genus $\genus$.

\section{Hyperbolic surfaces}
\label{sec:surfaces}

A hyperbolic surface is a $2$-dimensional orientable Riemannian manifold endowed with a metric
of constant negative curvature equal to $-1$.  Such surfaces can be obtained by factorizing the
hyperbolic plane $\mathbb{H}=\{x+\I y \in \mathbb{C} \mid y>0\}$ with metric $y^{-2}(dx^2+dy^2)$
by a discrete co-compact hyperbolic subgroup $\Gamma$ of $SL(2,\mathbb{R})$.
The isometric action of $SL(2,\mathbb{R})$ is given by fractional linear transformations
$$
 \left( \begin{matrix} a & b \\ c & d \end{matrix} \right )  z =  \frac{a z +b}{c z +d}.
$$
Another way to obtain a surface of genus $\genus$ with hyperbolic metric is to
glue $2 \genus-2$ hyperbolic pair of pants.  A hyperbolic pair of pants is a genus $0$
surface with boundary $S^1 \dot \cup S^1 \dot \cup S^1$ equipped with a metric
of constant negative curvature $-1$ such that the boundary curves are
geodesics (see Figure \ref{pants}). We will also refer to such a surface as a $Y$-piece.

\begin{figure}[htp] 
\centering
\includegraphics*[width=3cm]{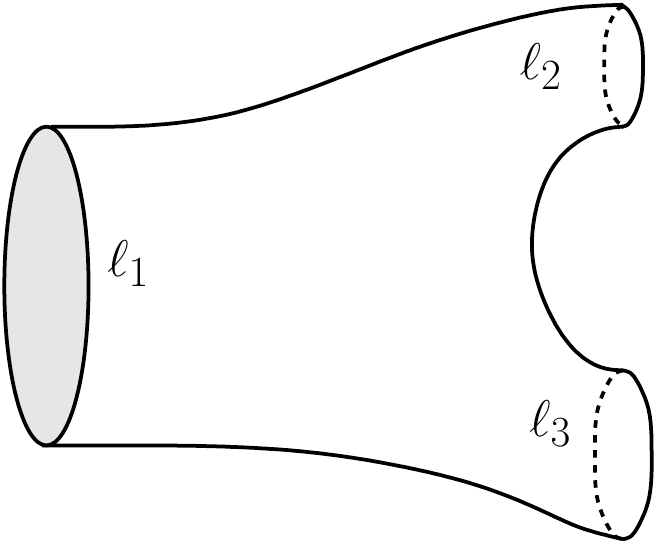}
\caption{$Y$-piece with boundary geodesics}\label{pants}
\end{figure}

Such hyperbolic pairs of pants are, up to isometry, uniquely determined by the
length of their boundary geodesics $(\ell_1,\ell_2,\ell_3)$. Any hyperbolic
surface can be decomposed into pairs of pants by cutting along $3\genus-3$
non-intersecting simple geodesics on the surface (see Figure \ref{genus3figure}).  This results in $2 \genus
-2$ pairs of pants. 

\begin{figure}[htp]
\centering 
\includegraphics*[width=10cm]{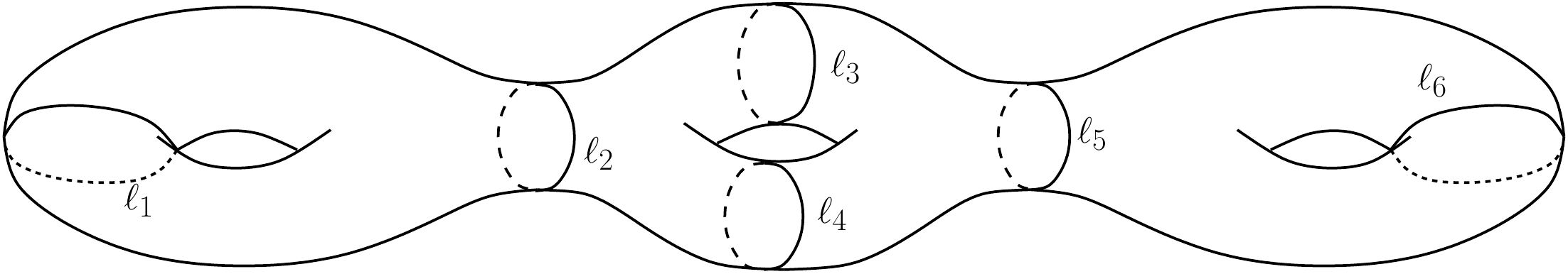}
\caption{Pants decomposition of a surface of genus three into four $Y$-pieces} \label{genus3figure}
\end{figure} 

A pair of pants can be glued from a right angled geodesic octagon in the upper half space (see Figure \ref{octi})
by identifying the sides $b$ and $h$, as well as $e$ and $g$ respectively. As indicated in the figure the geodesic
octagon is constructed from two right angled geodesic hexagons.

\begin{figure}[htp] 
\centering 
\includegraphics*[width=5cm]{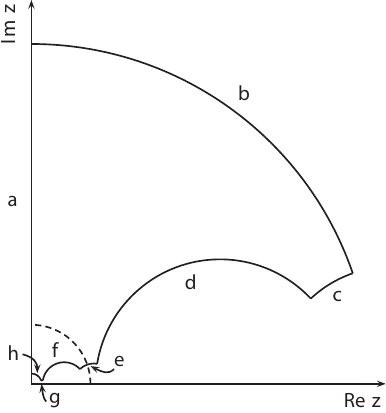}
\caption{$Y$-piece glued from a right angled geodesic octagon}\label{octi}
\end{figure} 

Note that identification of the sides $b$ and $h$ only yields a subset of a hyperbolic cylinder so that
every pair of pants can be constructed from a hyperbolic cylinder by cutting and gluing.

\section{The method of particular solutions on a surface of genus $\genus$}
\label{sec:MPS_on_surface}

\subsection{Explicit error estimates for hyperbolic surfaces} \label{s:4.1}

The first ingredient in our algorithm is an estimate on the constant in 
the estimate (\ref{const}) for $\tilde F^\Gamma$ in the particular case
when $M$ is a hyperbolic surface and $\Gamma$ is a finite union of geodesic segments.
To start let us assume that $\Gamma$ consists of exactly one geodesic segment. The general
case of a finite union of geodesic segments will later be reduced to this case.
We can assume that $M$ is realized as a quotient of the upper half plane $\mathbb{H}$ by some
co-compact hyperbolic subgroup in $\mathrm{SL}(2,\rz)$
and that $D \subset \mathbb{H}$ is an (open) fundamental domain
whose boundary is a geodesic polygon. This means we can identify functions on $M$
with functions on  $\mathbb{H}$ that are invariant under the action of the group.
We can also assume that the geodesic segment $\Gamma$
is a segment on the imaginary axis.
Then there is a tesselation of $\mathbb{H}$ by translates $D_i$
of $D$.
Let $L>0$ and suppose that $D_1,\ldots,D_N$ is a finite number of translates
of $D$ such that $\mathrm{dist}(x,D) \leq L$ implies that
$x$ is contained in the closure of $\cup_{i=1}^N D_i$.
If $L$ is chosen as the smallest distance between non-adjacent sides of $D$ then $N-1$ would be
the minimal number of fundamental domains of $D$ needed to cover an open neighborhood of
$\overline D$ (and thus the number of neighboring fundamental domains sharing either an edge or a corner 
with $D$).
Let $\chi: \mathbb{H} \to [0,1]$ be smooth compactly supported function which is equal to
one in an open neighborhood $O$ of $D$ with support contained in
$\tilde D = \overline{\cup_{i=1}^N D_i}$.
Now suppose that $h$ is a distribution on $M$. Then,
\begin{gather*}
 \| h \|_{H^{-2}(M)} = \sup_{0 \not=f \in C^\infty(M)} \frac{|h(f)|}{\|(\Delta+1)f\|_{L^2(M)}}=\\
 =\sup_{0 \not= f \in C^\infty(M)} \frac{|h(\chi f)|}{\|(\Delta+1) \chi f\|_{L^2(D)}}
\end{gather*}
Using
\begin{gather*}
 (\Delta+1)(\chi f) = f (\Delta \chi) + \chi  (\Delta+1) f + 2 (\nabla \chi, \nabla f), 
\end{gather*}
where the inner product is with respect to the metric, we get
\begin{gather*}
 \|(\Delta+1) \chi f\|_{L^2(\tilde D)}
 \leq \|\chi (\Delta+1) f\|_{L^2(\tilde D)} + \|f \Delta \chi\|_{L^2(\tilde D)}+
 2 \|\langle \nabla \chi, \nabla f \rangle \|_{L^2(\tilde D)}.
\end{gather*}
Of course
\begin{gather*}
  2 \|\langle \nabla \chi, \nabla f \rangle \|_{L^2(\tilde D)} \leq 
  2 \| \nabla \chi \|_\infty \| \nabla f \|_{L^2(\tilde D)}=\\=
  2 \| \nabla \chi \|_\infty \sqrt{\langle f, \Delta f \rangle_{L^2(\tilde D)}} \leq
   \| \nabla \chi \|_\infty \|(\Delta+1) f\|_{L^2(\tilde D)}.
\end{gather*}
Since $f$ is invariant under the group action 
$$
 \|(\Delta+1) f\|_{L^2(\tilde D)}=\sqrt{N} \|(\Delta+1) f\|_{L^2(D)},
$$
and therefore we obtain 
\begin{gather*}
 \|(\Delta+1) \chi f\|_{L^2(\tilde D)} \leq \tilde C \|(\Delta+1) \chi f\|_{L^2(D)}
\end{gather*}
with 
$$
 \tilde C = \sqrt{N} \left( 1 +  \| \nabla \chi \|_\infty + \| \Delta \chi \|_\infty \right).
$$
Consequently,
\begin{gather} \label{sob}
\| h \|_{H^{-2}(M)}  \leq \tilde C \sup_{f \in C^\infty(M)} \frac{|h(\chi f)|}{\|(\Delta+1) (\chi f)\|_{L^2(\tilde D)}}
\end{gather}
To continue we use a particular coordinate system on the upper half space.
Namely, we identify the upper half space with $(-\pi/2,\pi/2) \times \mathbb{R} $
using coordinates $(\varphi,t)$ and the metric $(\cos{\varphi})^{-2}(d\varphi^2+dt^2)$ such that
$\varphi=0$ coincides with the imaginary axis and the coordinate system is centered at the point $\I$. 
Note that $(\varphi,t)$ is related to Fermi coordinates
$(\rho,t)$ by $(\cosh{\rho},t)=((\cos{\varphi})^{-1},t)$.
We assume that in these coordinates $\Gamma$ is identified with the segment $\{0\} \times [0,\ell]$
so that $h$ has the form
\begin{gather*}
 h(f) = h_1(f) +h_2(f)=\int_0^\ell F_1(t) f(0,t) dt - \int_0^\ell  F_2(t) \left(\frac{d}{d\varphi }f \right)(0,t) dt.
\end{gather*}
Moreover in these coordinates $\Delta$ has the form $(\cos^{2} \varphi) \Delta_e$ where
$$\Delta_e = -\left( \frac{\partial^2}{\partial \varphi^2} + \frac{\partial^2}{\partial t^2}\right)$$
is the Euclidean Laplace operator in coordinates $(\varphi,t)$.
Then we have
\begin{gather*}
 \|(\Delta+1) (\chi f)\|_{L^2(\tilde D)}^2 = \\=
 \| |\cos{\varphi}| \Delta_e (\chi f) \|^2_{L^2((-\pi/2,\pi/2) \times \mathbb{R})}  +
 2 \langle \chi f, \Delta_e (\chi f) \rangle_{L^2((-\pi/2,\pi/2) \times \mathbb{R})} +\\+
  \| |\cos{\varphi}|^{-1} \chi f \|^2_{L^2((-\pi/2,\pi/2) \times \mathbb{R})}  \geq
  (\hat C \|(\Delta_e+1) (\chi f)\|_{L^2((-\pi/2,\pi/2) \times \mathbb{R})})^2,
  \end{gather*}
  where $\hat C = \inf \{ |\cos \varphi| \mid \chi(\varphi,t) \not=0 \}$.
  Collecting all terms we obtain
  $$
    \| h \|_{H^{-2}(M)}  \leq \hat C^{-1} \tilde C \sup_{f \in C^\infty_0(\mathbb{R}^2)} \frac{|h(f)|}{\|(\Delta_e+1) f\|_{L^2(\mathbb{R}^2)}}.
  $$
  However,
  \begin{gather*}
    |h_1(f)| = \frac{1}{\sqrt{2 \pi}} \int_{\mathbb{R}^2} \frac{|\hat F_1(\eta)|}{1+\xi^2+\eta^2} (1+\xi^2+\eta^2) \hat f(\xi,\eta) d\xi d\eta \leq \\ \leq
    \frac{1}{2} \| F_1\|_{H^{3/2}(\mathbb{R})} \|(\Delta_e+1) f\| ,\\
     |h_2(f)| = \frac{1}{\sqrt{2 \pi}} \int_{\mathbb{R}^2} \frac{|\hat F_2(\eta) \xi |}{1+\xi^2+\eta^2} (1+\xi^2+\eta^2) \hat f(\xi,\eta) d\xi d\eta\leq  \\ \leq
     \frac{1}{2} \| F_2\|_{H^{1/2}(\mathbb{R})} \|(\Delta_e+1) f\|,
  \end{gather*}
  where $F_1$ and $F_2$ are understood as functions on $\mathbb{R}$ extended by zero from $[0,\ell]$.
 Therefore, we finally obtain
  \begin{gather} \label{sbound1}
    \| h \|_{H^{-2}(M)} \leq C \sqrt{\| F_1\|_{H^{3/2}(\mathbb{R})}^2+\| F_2\|^2_{H^{1/2}(\mathbb{R})}},
  \end{gather}
  where the constant $C$ is given by
  \begin{gather}\label{sbound2}
   C = \frac{1}{\sqrt{2}} (\sup_{(\rho,t) \in \mathrm{supp}{\chi}} \cosh{\rho}) \sqrt{N} 
   \left( 1 + \| \nabla \chi \|_\infty + \| \Delta \chi \|_\infty \right).
  \end{gather}
 If $\Gamma$ consists of several geodesic segments then of course the same inequality holds with the constant being the
 maximum of the constants for the individual segments.
 
 For low lying eigenvalues it is in practice easier to compute the $L^2$-norms of $F_1$ and $F_2$. Of course since the negative
 Sobolev norms are dominated by the $L^2$-norms the above gives estimates of $\| h \|_{H^{-2}(M)}$ in terms
 of the $L^2$-norms of $F_1$ and $F_2$. However, in this case a slightly better constant can be obtained using 
 Theorems \ref{app1} and \ref{app2}. Indeed, equation \ref{sob} means by duality that
 $$
  \| h \|_{H^{-2}(M)} \leq \tilde C \| (\Delta_{\mathbb{H}}+1)^{-1} h \|_{L^2(\mathbb{H})},
 $$
 where $\Delta_{\mathbb{H}}$ is the Laplace operator on the hyperbolic plane.
 By Theorem \ref{app1} and \ref{app2} we have
 \begin{gather} \label{l2errorestimateg2}
  \| (\Delta_{\mathbb{H}}+1)^{-1} h \|_{L^2(\mathbb{H})} \leq \sqrt{C_1^2+C_2^2} \sqrt{\| F_1 \|_{L^2(\mathbb{R})}^2 + \| F_2 \|^2_{L^2(\mathbb{R})}}.
 \end{gather} 
where $C_1$ and $C_2$ are the constants in these theorems.
 
\begin{example}
 For a surface of genus $2$ we use a right angled 12-gon and thus
 have $N=25$.
 Let $L$ be the minimum of the distance between non-adjacent sides.
 For each side $s_i$ we use a cutoff function $\chi_i(\rho,t)=P(\rho/L)$, 
 where $P$ is the function
 $$
  P(x) = \begin{cases} 1, & x<0 \\ 0, & x>1 \\
  1-3 x^2 + 2 x^3, & x \in [0,1],
 \end{cases}
 $$
 and $\rho$ are Fermi-coordinates with respect to the infinite extension of the side
 $s_i$ oriented in such a way that the region $\rho>0$ does not intersect the polygon.
 Note that $\| \nabla \chi_i \| \leq \frac{3}{2 L}$ and $\Delta \chi_i \leq \frac{6}{L^2} + \frac{3}{2 L}$.
 With  $\chi = \chi_1 \cdots \chi_{12}$ we then get
 \begin{gather}
  \| \nabla \chi \|_\infty \leq \frac{3}{L},\\
  \| \Delta \chi \|_\infty \leq \frac{33}{L^2} + \frac{3}{L},
\end{gather}
where we used the well known formula  
$\Delta(\chi_1 \chi_2) = \Delta(\chi_1) \chi_2 + 2 \langle \nabla \chi_1, \nabla \chi_2\rangle + \Delta(\chi_2) \chi_1$
and the fact that near a fixed point all but at most two of the functions $\chi_i$ are constant.
Since $\chi$ is not smooth we can not directly use it in our estimate. However, we can use a suitable regularization
$\chi_\epsilon$ and the fact the the second distributional derivatives of $\chi$ are in $L^\infty$ to show that
the estimate above still holds with the function $\chi$. Collecting all terms we get
\begin{gather*}
\| h \|_{H^{-2}(M)}  \leq 5\sqrt{(C_1^2 + C_2^2)} \left( \frac{33}{L^2} + \frac{6}{L} +1\right)  \sqrt{\| F_1 \|_{L^2(\mathbb{R})}^2 + \| F_2 \|^2_{L^2(\mathbb{R})}}.
\end{gather*}
Since $5\sqrt{(C_1^2 + C_2^2)} \leq 12$ this further simplifies into the easy to use estimate
\begin{gather} \label{l2errorestimateg2}
 \| h \|_{H^{-2}(M)} \leq 12 \left( \frac{33}{L^2} + \frac{6}{L} + 1\right)  \sqrt{\| F_1 \|_{L^2(\mathbb{R})}^2 + \| F_2 \|^2_{L^2(\mathbb{R})}}.
\end{gather}
\end{example}
This estimate improves by a factor of $4.69$ if one is willing to believe the numerical values computed
at the end of Appendix \ref{constest}.

\subsection{Basis functions on hyperbolic cylinders}
\label{subsec:cylinders}

Let $\ell>0$. Then the hyperbolic cylinder $Z_\ell$ is the non-compact manifold obtained 
by factorizing the upper half space by the subgroup
of $SL(2,\mathbb{R})$ generated by the element $\left(\begin{matrix} \mathrm{e}^{\ell/2} & 0 \\ 0 & \mathrm{e}^{-\ell/2}\end{matrix}\right)$.
We will use Fermi coordinates $(\rho,t)$ which are related to the usual coordinates by 
$$
 (x,y) = \mathrm{e}^{t} (\tanh{\rho},\frac{1}{\cosh{\rho}}).
$$
For $L>0$ denote by $Z_\ell^L$ the finite hyperbolic cylinder 
  $$
   Z^L_\ell = \{ x \in Z_\ell \mid -L\leq\rho(x)\leq L \}.
  $$
For $\lambda \in \rz$ let $V^{(\lambda)}(Z^L_\ell)$  be the space of functions $f$ in $C^\infty(Z^L_\ell)$
that satisfy
$$
 (\Delta-\lambda) f =0.
$$
Note that $f \in V^{(\lambda)}(Z^L_\ell)$  has a Fourier expansion of the form
$$
  f(\rho,t)=\sum_{k \in \zz} \left( a_k\;\phi_k(\rho,t) + b_k\; \psi_k(\rho,t) \right),
$$
where 
$(\phi_k)_{k \in \zz}$
satisfy
$$
 (\Delta-\lambda) \phi_k = 0
$$
and have initial data 
\begin{gather*}
 \phi_k(t,0)=\mathrm{e}^{2 \pi \I k t/ \ell},\\
 \partial_\rho \phi_k(t,0)=0,
\end{gather*}
and the functions $\psi_k$
satisfy
$$
 (\Delta-\lambda) \psi_k = 0
$$
and have initial data
\begin{gather*}
 \psi_k(t,0)=0,\\
 \partial_\rho \psi_k(t,0)=\mathrm{e}^{2 \pi \I k t / \ell}.
\end{gather*}
These functions are of the form $\Phi_k(\rho) \mathrm{e}^{2 \pi \I k t/ \ell}$,
where $\Phi_k(\rho)$ solves the ordinary differential equation
\begin{gather} \label{dequ}
 (-\frac{1}{\cosh{\rho}}\frac{d}{d\rho} \cosh{\rho} \frac{d}{d\rho} + \frac{4 \pi^2 k^2}{\ell^2 \cosh^2{\rho}} -\lambda) \Phi_k(\rho)=0
\end{gather}
with the corresponding initial conditions.
A fundamental system of (non-normalized) solutions of this equation, consisting of an even and an odd function, can be given explicitly in terms of hypergeometric functions
\begin{gather*}
 \Phi_k^{even}(\rho)= (\cosh{\rho})^{\frac{2 \pi \I k}{\ell}} \; {}_2 \mathrm{F}_1(\frac{s}{2}+\frac{\pi \I k}{\ell},\frac{1-s}{2}+\frac{\pi \I k}{\ell};\frac{1}{2};-\sinh^2\rho),\\
 \Phi_k^{odd}(\rho)= \sinh{\rho} (\cosh{\rho})^{\frac{2 \pi \I k}{\ell}} \; {}_2 \mathrm{F}_1(\frac{1+s}{2}+\frac{\pi \I k}{\ell},\frac{2-s}{2}+\frac{\pi \I k}{\ell};\frac{3}{2};-\sinh^2\rho),
\end{gather*}
where $\lambda=s(1-s)$ (see \cite{Borthwick:2010fk}, where these functions are analysed). Normalization gives the corresponding solutions to the initial value problems.

Denote by $V_N^{(\lambda)}$ be $4N+2$ dimensional subspace of $V^{(\lambda)}$ spanned by $\phi_k$
and $\psi_k$ with $|k| \leq N$.
Given $f \in V^{(\lambda)}$ we can truncate the Fourier expansion to obtain an element $f^{(N)} \in V_N^{(\lambda)}$ given by
 $$
  f^{(N)}(\rho,t)=\sum_{| k | \leq N } \left( a_k\; \phi_k(\rho,t) + b_k\; \psi_k(\rho,t) \right).
 $$

The $C^1$-norm on the cylinder $Z^L_\ell$ is defined as
$$
  \Vert f \Vert_{C^1}^2 = \Vert f \Vert_{L^\infty}^2 +  \Vert \frac{1}{\cosh{\rho}} \partial_t f \Vert_{L^\infty}^2 +
  \Vert  \partial_\rho f \Vert_{L^\infty}^2.
$$
The following result is crucial for our approximation of eigenfunctions.

\begin{theorem}\label{bestim}
  Suppose  $\phi \in V^{(\lambda)}(Z_\ell)$ is bounded on $Z_\ell$. Let $L>0$ and
  suppose that $N$ is an integer such that $\frac{4 \pi^2 N^2}{\ell^2 \cosh^2{L}} >\lambda$.
  Then,
  \begin{gather*}
   \Vert \phi - \phi^{(N)} \Vert_{L^\infty(Z^L_\ell)} \leq  \tilde \beta_\lambda(N) \| \phi \|_{L^\infty(Z_\ell)},\\
   \Vert \phi - \phi^{(N)} \Vert_{C^1(Z^L_\ell)} \leq \beta_\lambda(N) \| \phi \|_{L^\infty(Z_\ell)},
  \end{gather*}
  where $\tilde \beta_\lambda(N)=A_1(N)$, $\beta_\lambda(N)=\sqrt{A_1(N)^2+A_2(N)^2+A_3(N)^2},$
  \begin{gather*}
  A_1(N) = 4  \sum_{k=N+1}^\infty  \left(\cosh{ c_k(\varphi_0)}\right)^{-1},\\
  A_2(N) = 4  \sum_{k=N+1}^\infty  \frac{2 \pi k}{\ell} \left(\cosh{c_k(\varphi_0)}\right)^{-1},\\
  A_3(N) = 4  \sum_{k=N+1}^\infty  \sqrt{\frac{4 \pi^2 k^2}{\ell^2}-\frac{\lambda}{\cos^2(\varphi_0)}} \left(\sinh{c_k(\varphi_0)}\right)^{-1},
 \end{gather*}
 and
 $$
  c_k(\varphi)=\frac{2 \pi k}{\ell} \arccos\left(\frac{\frac{2 \pi k}{\ell}  \sin
   (\varphi)}{\sqrt{\frac{4 \pi^2 k^2}{\ell^2}-\lambda}}\right)-\sqrt{\lambda}\; \arccot\left(\frac{\sqrt{\lambda} \sin (\varphi)}{\sqrt{\frac{4 \pi^2 k^2}{\ell^2}
   \cos^2(\varphi)- \lambda}}\right).
 $$
\end{theorem}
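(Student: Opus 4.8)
The plan is to diagonalise in the $t$--variable and reduce the statement to a one--dimensional estimate for the radial equation (\ref{dequ}).

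\textbf{Step 1 (Fourier reduction).} Put $\omega_k=2\pi k/\ell$ and $g_k(\rho)=\frac1\ell\int_0^\ell\phi(\rho,t)\mathrm{e}^{-\I\omega_k t}\,dt$, so that $\phi(\rho,t)=\sum_{k\in\zz}g_k(\rho)\mathrm{e}^{\I\omega_k t}$. Each $g_k$ solves (\ref{dequ}), and, being an average of $\phi(\rho,\cdot)$, it satisfies the a priori bound $|g_k(\rho)|\le\|\phi\|_{L^\infty(Z_\ell)}=:M$ for \emph{all} $\rho$ and $k$. Since $\phi^{(N)}$ is exactly the truncation $\sum_{|k|\le N}g_k(\rho)\mathrm{e}^{\I\omega_k t}$, the remainder is $\phi-\phi^{(N)}=\sum_{|k|>N}g_k(\rho)\mathrm{e}^{\I\omega_k t}$; the triangle inequality on $Z^L_\ell$, together with $\cosh\rho\ge1$, gives
$$
\|\phi-\phi^{(N)}\|_{L^\infty(Z^L_\ell)}\le\sum_{|k|>N}\sup_{|\rho|\le L}|g_k(\rho)|,\qquad
\Big\|\tfrac{\partial_t(\phi-\phi^{(N)})}{\cosh\rho}\Big\|_{L^\infty(Z^L_\ell)}\le\sum_{|k|>N}|\omega_k|\sup_{|\rho|\le L}|g_k(\rho)|,
$$
and $\|\partial_\rho(\phi-\phi^{(N)})\|_{L^\infty(Z^L_\ell)}\le\sum_{|k|>N}\sup_{|\rho|\le L}|g_k'(\rho)|$. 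Since the three terms defining $\|\cdot\|_{C^1}^2$ are simply added, it suffices to prove, for each $k$ with $|k|>N$,
$$
\sup_{|\rho|\le L}|g_k(\rho)|\le\frac{2M}{\cosh c_k(\varphi_0)},\qquad
\sup_{|\rho|\le L}|g_k'(\rho)|\le\frac{2M}{\sinh c_k(\varphi_0)}\sqrt{\tfrac{4\pi^2k^2}{\ell^2}-\tfrac{\lambda}{\cos^2\varphi_0}},
$$
where $\varphi_0=\arccos(1/\cosh L)$; summing over $k>N$ and $k<-N$ then reproduces exactly $A_1,A_2,A_3$.

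\textbf{Step 2 (the radial equation in the $\varphi$--variable).} If $\lambda<0$ then $\Delta-\lambda$ is strictly positive and the maximum principle forces $\phi\equiv0$, so the statement is trivial; assume $\lambda\ge0$. The substitution $\sin\varphi=\tanh\rho$ turns (\ref{dequ}) into the Schr\"odinger form
$$
\Psi''(\varphi)=Q_k(\varphi)\,\Psi(\varphi),\qquad Q_k(\varphi)=\frac{4\pi^2k^2}{\ell^2}-\frac{\lambda}{\cos^2\varphi},
$$
with $g_k(\rho)=\Psi(\varphi(\rho))$ and $\partial_\rho=\cos\varphi\,\partial_\varphi$ (hence $|g_k'|\le|\Psi'|$), and one checks that the closed expression for $c_k(\varphi)$ in the statement is precisely $\int_\varphi^{\varphi_k^*}\sqrt{Q_k}\,d\psi$, where $\varphi_k^*$ is the turning point $\cos\varphi_k^*=\tfrac{\ell\sqrt\lambda}{2\pi|k|}$ (degenerating to $\pi/2$ when $\lambda=0$). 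The hypothesis $\tfrac{4\pi^2N^2}{\ell^2\cosh^2L}>\lambda$ says exactly that $\varphi_0<\varphi_k^*$, i.e. $Q_k>0$ on $[-\varphi_0,\varphi_0]$, for all $|k|\ge N+1$. On $[0,\varphi_k^*)$ the equation is disconjugate, so the even and odd solutions $\Psi^{\mathrm{ev}}$ (with $\Psi^{\mathrm{ev}}(0)=1$, $(\Psi^{\mathrm{ev}})'(0)=0$) and $\Psi^{\mathrm{od}}$, as well as $|(\Psi^{\mathrm{ev}})'|$ and $|(\Psi^{\mathrm{od}})'|$, are monotone in $|\varphi|$ there; hence the $\sup$ over $|\varphi|\le\varphi_0$ of each equals its value at $\varphi_0$. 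Writing $g_k=g_k^{\mathrm{ev}}+g_k^{\mathrm{od}}$ with $g_k^{\mathrm{ev}}=g_k^{\mathrm{ev}}(0)\,\Psi^{\mathrm{ev}}$ and using $|g_k^{\mathrm{ev}}(\rho)|\le M$ for all $\rho$, we get $|g_k^{\mathrm{ev}}(0)|\le M/\sup_{|\varphi|<\pi/2}|\Psi^{\mathrm{ev}}|$, hence
$$
\sup_{|\rho|\le L}|g_k^{\mathrm{ev}}(\rho)|\le M\,\frac{\Psi^{\mathrm{ev}}(\varphi_0)}{\sup_{|\varphi|<\pi/2}|\Psi^{\mathrm{ev}}|},
$$
and likewise for $g_k^{\mathrm{od}}$ and for the $\rho$--derivatives. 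The problem is thus reduced to bounding $\Psi^{\bullet}(\varphi_0)/\sup|\Psi^{\bullet}|$ (and its derivative analogue) by $1/\cosh c_k(\varphi_0)$, resp. $\sqrt{Q_k(\varphi_0)}/\sinh c_k(\varphi_0)$.

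\textbf{Step 3 (the WKB/Riccati estimate).} For the numerators I would use the Riccati variable $v=\Psi'/\Psi$, which solves $v'=Q_k-v^2$: comparing $v$ with $\sqrt{Q_k}\tanh\!\big(\int_0^\varphi\sqrt{Q_k}\big)$ for $\Psi^{\mathrm{ev}}$ and with $\sqrt{Q_k}\coth\!\big(\int_0^\varphi\sqrt{Q_k}\big)$ for $\Psi^{\mathrm{od}}$ — these are sub/supersolutions of the Riccati equation because $Q_k'\le0$ on $[0,\varphi_k^*)$ — yields, via the elementary comparison principle, two--sided Liouville--Green bounds such as $\cosh\!\big(\int_0^\varphi\sqrt{Q_k}\big)\le\Psi^{\mathrm{ev}}(\varphi)\le\exp\!\big(\int_0^\varphi\sqrt{Q_k}\big)$ on $[0,\varphi_0]$, and the corresponding $\sinh$--bounds and derivative bounds (this is where $\sqrt{Q_k(\varphi_0)}$ and the change from $\cosh$ to $\sinh$ in the odd case come in). For the denominators one propagates $\Psi^{\bullet}$ past $\varphi_k^*$: the part of $\Psi^{\bullet}$ that is dominant on the classically forbidden interval $(\varphi_0,\varphi_k^*)$ connects, through the Airy--type turning point, to an oscillation on $(\varphi_k^*,\pi/2)$ of amplitude at least $\Psi^{\bullet}(\varphi_0)\cosh\!\big(\int_{\varphi_0}^{\varphi_k^*}\sqrt{Q_k}\big)=\Psi^{\bullet}(\varphi_0)\cosh c_k(\varphi_0)$ (when $\lambda=0$ the turning point is the endpoint $\pi/2$, $\Psi^{\mathrm{ev}}(\varphi)=\cosh(\tfrac{2\pi|k|}{\ell}\varphi)$ exactly, and the bound reduces to the elementary inequality $\cosh(c_k(0)-c_k(\varphi_0))\le\cosh c_k(0)/\cosh c_k(\varphi_0)$, with no Airy analysis needed). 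Combining numerator and denominator bounds and summing over $\pm k$ gives the per--mode estimates of Step 1, hence $\tilde\beta_\lambda$ and $\beta_\lambda$.

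\textbf{The main obstacle.} The Fourier bookkeeping, the change of variables, the disconjugacy/monotonicity facts and the $\lambda<0$ case are routine. The real difficulty — and the only place where boundedness of $\phi$ on \emph{all} of $Z_\ell$, as opposed to just on $Z^L_\ell$, is genuinely used — is the quantitative turning--point analysis in Step 3: controlling, uniformly in $k$ and with the sharp constant $\cosh c_k(\varphi_0)$ rather than a lossy multiple of $\mathrm{e}^{c_k(\varphi_0)}$, how the exponentially growing part of $\Psi^{\bullet}$ on $(\varphi_0,\varphi_k^*)$ forces a lower bound on $\sup_{|\varphi|<\pi/2}|\Psi^{\bullet}|$. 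Equivalently, this can be extracted from the known connection coefficients of the hypergeometric solutions $\Phi_k^{\mathrm{even}},\Phi_k^{\mathrm{odd}}$; getting the constants to come out exactly as in the statement of the theorem is the technical heart of the proof.
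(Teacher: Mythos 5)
Your Steps 1 and 2 coincide with the paper's own argument: the proof there likewise decomposes $\phi-\phi^{(N)}$ into Fourier modes, splits each radial coefficient into its even and odd parts (each separately bounded by $\|\phi\|_{L^\infty(Z_\ell)}$ for \emph{every} $\rho$, which is where global boundedness enters), passes to the variable $\varphi$ with potential $V_k(\varphi)=\frac{4\pi^2k^2}{\ell^2}-\frac{\lambda}{\cos^2\varphi}$, and identifies $c_k(\varphi)=\int_\varphi^{s_k}\sqrt{V_k}$ with $s_k$ the turning point; the factor $4$ in $A_1,A_2,A_3$ arises exactly as you describe.

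The genuine gap is in Step 3, precisely the step you flag as the ``technical heart.'' First, the route you sketch would not deliver the stated constants: combining Liouville--Green bounds anchored at $\varphi=0$, namely $\Psi^{\mathrm{ev}}(\varphi_0)\le e^{a}$ with $a=\int_0^{\varphi_0}\sqrt{Q_k}$ and $\sup|\Psi^{\mathrm{ev}}|\ge\Psi^{\mathrm{ev}}(\varphi_k^*)\ge\cosh(a+b)$ with $b=c_k(\varphi_0)$, gives the ratio bound $e^{a}/\cosh(a+b)$; since $\cosh(a+b)=\cosh a\cosh b+\sinh a\sinh b\le e^{a}\cosh b$, this is $\ge 1/\cosh b$, i.e.\ strictly weaker than the claimed $1/\cosh c_k(\varphi_0)$ whenever $a>0$. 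Second, the Airy/connection analysis on $(\varphi_k^*,\pi/2)$ that you propose as the repair is unnecessary. The paper's Lemma \ref{growth1} runs the comparison \emph{starting at each $\varphi\in[0,\varphi_0]$} with the actual data $(\Psi(\varphi),\Psi'(\varphi))$ there (both nonnegative by convexity on the classically forbidden region) and evaluates at the turning point $s_k$ itself, which is an interior point of the cylinder:
$$
\Psi(s_k)\ \ge\ \Psi(\varphi)\cosh c_k(\varphi)+\frac{\Psi'(\varphi)}{\sqrt{V_k(\varphi)}}\,\sinh c_k(\varphi).
$$
Dropping one or the other nonnegative term yields exactly $|\Psi(\varphi)|\le\Psi(s_k)/\cosh c_k(\varphi_0)$ and $|\Psi'(\varphi)|\le\sqrt{V_k(\varphi)}\,\Psi(s_k)/\sinh c_k(\varphi_0)$, and boundedness of the mode is then invoked only at the single radius $\rho(s_k)$ via $|a_k|\Psi(s_k)\le\|\phi\|_\infty$; nothing about the oscillatory region beyond the turning point is needed, and no connection coefficients of the hypergeometric solutions enter. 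Your Riccati sub/supersolution machinery does prove the needed inequality once you compare against $\sqrt{Q_k}\tanh\bigl(\int_{\varphi}^{\cdot}\sqrt{Q_k}\bigr)$ (resp.\ $\coth$), i.e.\ with phases accumulated from $\varphi$ rather than from $0$; but as written the key inequality with the sharp constant is not established, and the proposed detour through turning-point asymptotics is the wrong tool for it.
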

\begin{remark}
 Both $\beta_\lambda(N)$ and $\tilde \beta_\lambda(N)$ are decreasing exponentially fast in $N$ for fixed $\lambda$
 and give very good bounds for reasonably large $N$ (see Fig. \ref{betas}).
 \begin{figure}[htp] 
\centering
\includegraphics*[width=10cm]{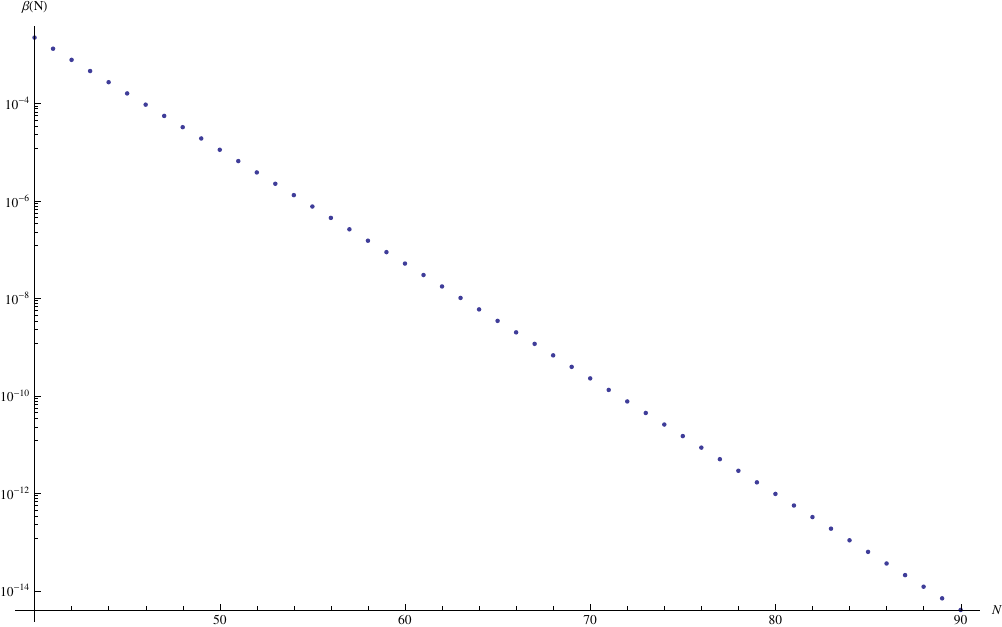}
\caption{$\beta_\lambda(N)$ on a logarithmic scale for $\lambda=30$, $\ell=2\;\mathrm{arccosh}(3+2\sqrt{2})$ and $L = 3/2$. The corresponding cylinder covers a 
fundamental domain for the Bolza surface.}\label{betas}
\end{figure} 
\end{remark}
\begin{proof}[Proof of Theorem \ref{bestim}]
 By rescaling we can assume in the proof that $\| \phi \|_{L^\infty(Z_\ell)} \leq 1$.
 Of course $\psi=\phi - \phi^{(N)}$ has a convergent expansion into
 a Fourier series
 \begin{gather} \label{expansion}
  \psi(\rho,t)=\sum_{| k | > N } \left( a_k\; \phi_k(\rho,t) + b_k\; \psi_k(\rho,t) \right)
 \end{gather}
 which converges uniformly on compact sets.
 Both $\phi_k$ and $\psi_k$ are of the form $\Phi_k(\rho) \mathrm{e}^{2 \pi \I k t/\ell}$
 where $\Phi_k$ solves the ordinary differential equation (\ref{dequ})
 with initial conditions either $\Phi_k(0)=1, \Phi'_k(0)=0$ or $\Phi_k(0)=0, \Phi'_k(0)=1$
 respectively.
 Thus, $\Phi_k$ is either even or odd with respect to the reflection $\rho \mapsto -\rho$.

 Substituting $\varphi=2 \arctan(\tanh{\rho/2})$ we write $\Phi_k(\rho)=\Psi_k(\varphi)$.
 Let
 $$
 V_k(\varphi)=\frac{4 \pi^2 k^2}{\ell^2}-\frac{\lambda}{\cos^2 \varphi}
 $$
 and define $s_k = \arccos\frac{\ell \sqrt{\lambda}}{2 \pi |k|}$ so that
 $V_k$ is non-negative on the interval $[0,s_k]$. Then, by Lemma \ref{growth1},
 we have for all $k>N$ and $\phi<s_k$:
 $$
  \Psi_k(s_k) \geq \Psi_k(\varphi)\cosh{c_k(\varphi)} + \Psi_k'(\varphi)\frac{1}{\sqrt{V_k(\varphi)}} \sinh{c_k(\varphi)}
 $$
 where
 $$
  c_k(\varphi)=\int_{\varphi}^{s_k} \sqrt{V_k(r)} dr.
 $$
 This integral can be computed explicitly and gives the expression in the theorem.
Therefore
 \begin{gather*}
  |\Psi_k(\varphi)|  \leq \left(\cosh{ c_k(\varphi)} \right)^{-1} \Psi_k(s_k) ,\\
  |\Psi_k'(\varphi)| \leq \sqrt{V_k(\varphi)} \left(\sinh{c_k(\varphi)}\right)^{-1}  \Psi_k(s_k).
 \end{gather*}
 Since the $\psi(\rho,t)$ is bounded by $1$ we have $|a_k| |\phi_k(\rho,t)| \leq 1$ and $|b_k| |\psi_k(\rho,t)| \leq 1$.
 For any $\rho>0$. With $\varphi_0=2 \arctan{\tanh{\frac{L}{2}}}$ the above bound then implies that for any 
 $\rho \in [-L,L]$ both $|a_k|  |\phi_k(\rho,t)|$ and $|b_k| |\psi_k(\rho,t)|$
 are smaller equal than
 \begin{gather*}
  \left(\cosh{c_k(\varphi_0)}\right)^{-1}
 \end{gather*}
 and $|a_k| |\frac{\partial}{\partial \rho}\phi_k(\rho,t)|$ and $|b_k| |\frac{\partial}{\partial \rho}\psi_k(\rho,t)|$
 are smaller equal than
 \begin{gather*}
  \sqrt{V_k(\varphi_0)} \left(\sinh{c_k(\varphi_0)}\right)^{-1} .
 \end{gather*}
 Summing the Fourier series one obtains on $Z^L$  the bounds
 $| \psi | \leq A_1$, $| \frac{\partial}{\partial t} \psi | \leq A_2$ and $|\frac{\partial}{\partial \rho} \psi | \leq  A_3$.
 \end{proof}

\subsection{Main estimates for the algorithm for a surface of genus $\genus$} \label{algo}

Suppose that $M$ is a surface of genus $\genus$. Then $M$ can be decomposed into $X$ and $Y$
pieces (pairs of pants). Each of these pieces can be cut open along a shortest geodesic that
connects two different boundary components to obtain a surface that can be identified with a
 closed subset of a hyperbolic cylinder (see Fig. \ref{cyl}).
 \begin{figure}
\centering
\includegraphics*[width=10cm]{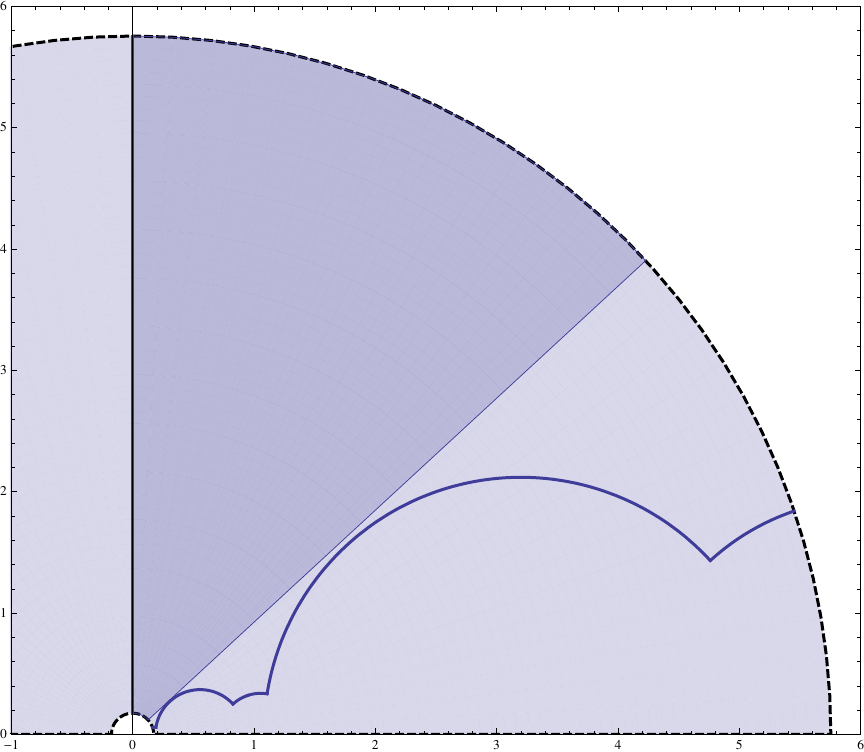}
\caption{A $Y$-piece that was cut open along two geodesic segments covered by a hyperbolic cylinder. The hyperbolic cylinder is obtained from the shaded region by identification of the two dashed circles. The inscribed finite cylinder is double shaded.} \label{cyl}
\end{figure} 
In this way we obtain a collection $(M_i)_{i=1,\ldots,m}$ of closed subsets
of hyperbolic cylinders $Z_{\ell_i}$ with geodesic boundary components in such a way that the defining unique simple
closed geodesic $\gamma_i$ on $Z_{\ell_i}$ is contained in $M_i$. 
Note that we use disjoint cylinders here, i.e.
we use a different copy of $Z_{\ell_i}$ even if the geodesics have the same length and correspond to the
same model in the upper half space.
As $\gamma_i$ is a simple closed geodesic in $M$ the subgroup it generates
in the fundamental group will define a covering map $Z_{\ell_i} \to M$.
Therefore, every eigenfunction $\phi$
on $M$ lifts to a bounded smooth function on $Z_{\ell_i}$ which is an eigenfunction
of the Laplace operator on $Z_{\ell_i}$.

The surface $M$ can of course be constructed from the $M_i$  by glueing, i.e. by identifying
the different boundary geodesics. Suppose that we have a collection
$(\Gamma_\alpha)_{\alpha =1,\ldots,M}$ of geodesic segments and a collection $(\Gamma_{\tilde \alpha})_{\alpha =1,\ldots,M}$ 
so that the geodesic segment $\Gamma_\alpha$ is identified with the segment $\Gamma_{\tilde\alpha}$.
Thus, $\Gamma = \cup_i \Gamma_i$ can be thought of as a finite union of geodesic segments on $M$ and $M \backslash \Gamma$
is the interior of the disjoint union $\coprod_i M_i$ of the $M_i$. This is exactly the setting described in section \ref{mps} where
$\overline{M \backslash \Gamma}$ gets identified with $\coprod_i M_i$ and the segments
$\Gamma_{\alpha}$ and $\Gamma_{\tilde \alpha}$ form the boundary of $\overline{M \backslash \Gamma}$.
Thus, for a function $f \in C^\infty(\coprod_i M_i)$ which is smooth up to the boundary the boundary
values at $\Gamma_{\alpha}$ and $\Gamma_{\tilde \alpha}$ represent the two different boundary values it has
along $\Gamma_\alpha$ if we think of it as a function on $M$.

On each segment $\Gamma_\alpha$ we choose a set $\{x_{\alpha,1},x_{\alpha,2},\ldots, x_{\alpha,n_\alpha}\}$
consisting of an even number
of equidistant points such that $x_{\alpha,1}$ is the origin of the segment and $x_{\alpha,n_\alpha}$
is the endpoint. Let $\{x_{\tilde\alpha,1},x_{\tilde \alpha,2},\ldots, x_{\tilde \alpha,n_{\tilde \alpha}}\}$
be the corresponding points on $\Gamma_{\tilde \alpha}$.
We denote by $\delta_\alpha$ the distance between neighbouring points on the segment $\Gamma_\alpha$
and by $\delta$ the maximum of $(\delta_\alpha)_\alpha$.

Let $N>0$ be a fixed integer and suppose $\lambda>0$. 
Of course on each cylinder $Z_{\ell_i}$ and for each integer $m$ such that $|m| \leq N$
there exists two linearly independent functions on this cylinder which are both eigenfunctions
of the Laplace operator with eigenvalue $\lambda$ and which are of the form
$$
 \Psi(\rho_i,t_i)=\Phi(\rho_i) \mathrm{e}^{\frac{2 \I \pi m t_i}{\ell_i}},
$$
where $(\rho_i,t_i)$ are Fermi coordinates on the corresponding cylinder.
We are choosing the basis functions orthonormal on the largest subset of $M_i$
which is invariant under the rotation symmetry of the cylinder (for a $Y$-piece this
will be one half of a cut-off hyperbolic cylinder and corresponds to the double shaded region
in Fig. \ref{cyl}). Since such a subset will always have
the form $[L_1,L_2] \times S^1$ this can be achieved by choosing
$\Phi(\rho_i)$ orthonormal on this interval.

We extend these functions by zero to the disjoint union of all the cylinders
and think of the collection of all these functions as a basis $(\Psi_k)_{k=1,\ldots,(4N+2)m}$
in the space 
$$
 \oplus_{i=1}^m V^{(\lambda)}_N(Z_{\ell_i}).
$$
We will use this basis to identify vectors in $\mathbb{C}^{(4N+2)m}$ with functions
on the disjoint union of the cylinders, but also with $L^\infty$ functions
on $M$, by restricting the functions to the pieces $M_i$. For $\mathbf{v} \in \mathbb{C}^{(4N+2)m}$
let $\Psi(\mathbf{v})$ be the corresponding function.

For each geodesic segment $\Gamma_\alpha$ we form the matrices
\begin{gather*}
 (\mathbf{A}_{1,\alpha})_{ik} = \sqrt{\frac{\delta_\alpha b_i}{3}} \Psi_k(x_i),\\
 (\mathbf{A}_{2,\alpha})_{ik} = \sqrt{\frac{\delta_\alpha b_i}{3}} \mathbf{n}_\alpha\Psi_k(x_i),\\
 (\mathbf{A}_{3,\alpha})_{ik} = \sqrt{\frac{\delta_\alpha b_i}{3}} \mathbf{t}_\alpha\Psi_k(x_i),
\end{gather*}
where $\mathbf{n}$ is the outward normal unit vector field along the geodesic segment,
$\mathbf{t}$ the normalized tangential vector field, 
and $b_i$ are the coefficients in the composite Simpson rule {\footnote{other coefficients such as those of the Gauss rule may also be used here and lead to analogous error estimates.}}, i.e.
$b_1=1, b_2=4,b_3=2, \ldots, b_{n_\alpha-1}=4,b_{n_\alpha-1}=1$.
Next arrange $\mathbf{A}_{1,\alpha}$ and $\mathbf{A}_{2,\alpha}$ into one matrix $\mathbf{A}_\alpha = \mathbf{A}_{1,\alpha} \oplus \mathbf{A}_{2,\alpha}$, and
add also the tangential derivatives $\mathbf{A}_\alpha^t = \mathbf{A}_{1,\alpha} \oplus \mathbf{A}_{2,\alpha} \oplus \mathbf{A}_{3,\alpha}$.
All the matrices involved map into the same space and we understand the direct sum here  as a direct
sum of linear maps mapping into $\mathbb{C}^{(4N+2)m}$. Thus, the matrix $\mathbf{A}_1 \oplus \mathbf{A}_2$ is obtained from
by attaching all the rows of $\mathbf{A}_2$ as rows to the matrix $\mathbf{A}_1$ (similar to the Matlab notation $[\mathbf{A}_1; \mathbf{A}_2]$).

Similarly, but with one change of sign
\begin{gather*}
 (\mathbf{A}_{1,\tilde \alpha})_{ik} = \sqrt{\frac{\delta_{\tilde \alpha} b_i}{3}} \Psi_k(x_i),\\
 (\mathbf{A}_{2,\tilde \alpha})_{ik} = -\sqrt{\frac{\delta_{\tilde \alpha} b_i}{3}} \mathbf{n}_{\tilde\alpha} \Psi_k(x_i),\\
 (\mathbf{A}_{3,\tilde \alpha})_{ik} = \sqrt{\frac{\delta_{\tilde \alpha} b_i}{3}} \mathbf{t}_{\tilde\alpha} \Psi_k(x_i),
\end{gather*}
and again $\mathbf{A}_{\tilde \alpha} = \mathbf{A}_{1,\tilde \alpha} \oplus \mathbf{A}_{2,\tilde \alpha}$,
$\mathbf{A}_{\tilde \alpha}^t = \mathbf{A}_{1,\tilde \alpha} \oplus \mathbf{A}_{2,\tilde \alpha} \oplus \mathbf{A}_{3,\tilde \alpha}$.
Now define
\begin{gather*}
 \mathbf{A} = \oplus_\alpha \mathbf{A}_{\alpha},\quad \mathbf{A}^t = \oplus_\alpha \mathbf{A}^t_{\alpha}\\
 \mathbf{\tilde A} =  \oplus_\alpha \mathbf{A}_{\tilde \alpha}, \quad  \mathbf{\tilde A}^t =  \oplus_\alpha \mathbf{A}^t_{\tilde \alpha}
\end{gather*}
as well as $\mathbf{B}_{\lambda,N} = \mathbf{A}^t - \mathbf{\tilde A}^t$,
$\mathbf{B}^0_{\lambda,N} = \mathbf{A} - \mathbf{\tilde A}$ and $\mathbf{C}_{\lambda,N} =\mathbf{A} \oplus \mathbf{\tilde A}$. Thus, $\mathbf{B}_{\lambda,N} $ as compared
to $\mathbf{B}^0_{\lambda,N} $ contains also the tangential derivatives.

For each $\mathbf{v} \in \mathbb{C}^{(4N+2)m}$ the vector $\mathbf{B}^0_{\lambda,N} \mathbf{v}$ contains the discretization of the jump of boundary
data of $\Psi(\mathbf{v})$ across $\Gamma$ whereas $\mathbf{C}_{\lambda,N} \mathbf{v}$ contains the discretization of the boundary data itself. 
We will see below that the $L^2$ norm of an eigenfunction on $M$ can be bounded from above and below by the $L^2$-norm of the boundary data and thus
the norm of $\mathbf{C}_{\lambda,N} \mathbf{v}$ will serve as a normalization in our method. To be more precise,
the matrices are chosen such that the norms $\| \mathbf{B}^0_{\lambda,N} \mathbf{v} \|$ and $\| \mathbf{C}_{\lambda,N} \mathbf{v} \|$
are the approximations of $F^\Gamma_{0,0}(\Psi(\mathbf{v}))$
and of the $L^2$ norm of the boundary data of $\Psi(\mathbf{v})$ using the Simpson
rule. The error of composite Simpson integration of a function $g$ on an interval $[a,b]$ with step size $\delta$ 
is bounded from above by
$\frac{\delta^4 |b-a|}{180} \sup_{\xi \in [a,b]} | g^{(4)}(\xi) |$ (see e.g. \cite{MR1423603}, p. 133).
Thus, we have
\begin{gather} \label{simpson01}
  | \| \mathbf{B}^0_{\lambda,N} \mathbf{v} \|^2 -( F^\Gamma_{0,0}(\Psi(\mathbf{v})))^2 | \leq \\ \nonumber
  \leq \frac{\delta^4 \ell_\Gamma}{180} \left(\| D_t^4(\Psi(\mathbf{v})^2) |_\Gamma\|_{L^{\infty}(\Gamma)} +
  \| D_t^4 \mathbf{n}_\Gamma(\Psi(\mathbf{v})^2) |_\Gamma\|_{L^{\infty}(\Gamma)}\right),\\
  \label{simpson02}
  | \| \mathbf{C}_{\lambda,N} \mathbf{v} \|^2 - \left( \| (\Psi(v)|_\Gamma \|_{L^2(\Gamma)}^2  + \| \mathbf{n}  \Psi(\mathbf{v})) |_\Gamma \|_{L^2(\Gamma)}^2  \right)|\leq \\ \leq
  \frac{\delta^4 \ell_\Gamma}{180}  \left(\| D_t^4(\Psi(\mathbf{v})^2) \|_{L^\infty(\Gamma)} 
  + \|  D_t^4 \mathbf{n}_\Gamma(\Psi(\mathbf{v})^2) |_\Gamma\|_{L^{\infty}(\Gamma)}\right), \nonumber
\end{gather}
where $D_t$ denotes the tangential derivative.

Moreover, as we chose the basis functions orthonormal with respect to the $L^2$-inner product of a
subset of $M$, we have
\begin{gather} \label{geq01}
 \| \Psi(\mathbf{v}) \|_{L^2(M)} \geq \| \mathbf{v} \|.
\end{gather}

Assume now that $\lambda$ is an eigenvalue of the Laplace operator and suppose that $\phi$
is an $L^2$-normalized eigenfunction. We may truncate the Fourier expansion of this eigenfunction
on each of the cylinders to obtain a function $\phi^{(N)}$ which then corresponds to a fixed vector
$\mathbf{v}$ such that $\phi^{(N)} = \Psi(\mathbf{v})$.
We choose $N$ large enough such that Theorem \ref{bestim} applies and from the fact that
$\phi$ is bounded by an explicitly computable constant (see Appendix, Corollary \ref{lbound})
the following estimates are explicit
\begin{gather}
 \| (\phi^{(N)} - \phi)|_\Gamma\|_{L^\infty(\Gamma)} \leq \beta_\lambda(N) \| \phi \|_{\infty},\\
 \| (\mathbf{n}\phi^{(N)} - \mathbf{n}\phi) |_\Gamma \|_{L^\infty(\Gamma)} \leq  \beta_\lambda(N) \| \phi \|_{\infty}.
\end{gather}
The error of the composite Simpson integration 
of a function $g$ on an interval $[a,b]$ with step size $\delta$ is also bounded by
$\delta |b-a| \sup_{\xi \in [a,b]} | g'(\xi) |$ as one can easily see by replacing $g$ in each sub-interval $[x_k,x_{k+1},x_{k+2}]$
of length $2 \delta$ by the constant function $g(x_{k+1})$.
Therefore\footnote{We use this estimate instead of (\ref{simpson02}) because a 
bound on the first derivative of eigenfunctions is easier to obtain explicitly and we do not need good accuracy for this bound}, we obtain
\begin{gather*}
 | \| \mathbf{C}_{\lambda,N} \mathbf{v} \|^2 - \left( \| \phi |_\Gamma \|_{L^2(\Gamma)}^2 + \|  \mathbf{n} \phi  |_\Gamma\|_{L^2(\Gamma)}^2 \right) | 
 \leq  2 \beta_\lambda(N)^2 \|\phi\|_{\infty}^2 + \\ +
 (2 \delta \ell_\Gamma) \left(\| D_t (\phi^2) |_\Gamma\|_{L^{\infty}(\Gamma)} + 
\| D_t \mathbf{n}_\Gamma (\phi^2) |_\Gamma\|_{L^{\infty}(\Gamma)}\right)
\end{gather*}
From the bound (\ref{l2bound}) of the Appendix and the fact that $\phi$ is $L^2$-normalized we conclude that
$$
   \left( \| \phi |_\Gamma \|_{L^2(\Gamma)}^2 + \|  \mathbf{n} \phi  |_\Gamma\|_{L^2(\Gamma)}^2 \right)^{\frac{1}{2}} > c_1(\lambda),
$$
where $c_1$ is explicitly computable and depends only on $\lambda$ and the geometry of $M$ and $\Gamma$.
Using the bound $L^\infty$ and the $C^1$ bound in the Appendix of eigenfunctions we can compute
constants $N_c$ and $\delta_c$ such that for all $N>N_c$ and $\delta < \delta_c$ 
we have
$$
 \| \mathbf{C}_{\lambda,N} \mathbf{v} \|  >  \frac{c_1(\lambda)}{2}.
$$
Note that this is a very crude estimate and we have used only the linear error estimate in Simpson's
rule. This was done so that we can use the explicit estimates that we proved in the appendix.
The critical constants $\delta_c$ and $N_c$
guarantee that the numerical integration using the Simpson rule together with the approximation
of $\phi$ by the truncated function together yield a relative error of not more than 50\%. For the method it will be enough
to show that $\| \mathbf{C}_{\lambda,N} \mathbf{v} \|$ is bounded from above so that this estimate is sufficient.

Let $\sigma_1(\mathbf{B}^0_{\lambda,N})$ be the smallest
singular value of $\mathbf{B}^0_{\lambda,N}$ and
$\sigma_1(\mathbf{B}_{\lambda,N},\mathbf{C}_{\lambda,N})$ the smallest generalized singular value (see e.g. \cite{MR1417720} or also \cite{MR2337576} in the context of MPS) of the pair
$(\mathbf{B}_{\lambda,N},\mathbf{C}_{\lambda,N})$. Here the generalized singular values of $\mathbf{B}_{\lambda,N} \in \mathrm{Mat}(m_2 \times m_1)$ with respect to 
$\mathbf{C}_{\lambda,N} \in \mathrm{Mat}(m_3 \times m_1)$ are defined
as the singular values of $\mathbf{B}_{\lambda,N}$ as an operator from $\mathbb{C}^{m_1}$ to $\mathbb{C}^{m_2}$ where the Euclidean
norm $\| \cdot \|$ on $\mathbb{C}^{m_1}$ is replaced by the norm $\| \cdot \|_{\mathbf{C}_{\lambda,N}}$ defined by 
$\| \mathbf{v} \|_{\mathbf{C}_{\lambda,N}}:=\| \mathbf{C}_{\lambda,N} \mathbf{v}\|$.
Note that
$$
 \sigma_1(\mathbf{B}_{\lambda,N},\mathbf{C}_{\lambda,N}) = \inf_{\mathbf{v} \not= 0} \frac{\|\mathbf{B}_{\lambda,N} \mathbf{v} \|}{\| \mathbf{C}_{\lambda,N} \mathbf{v}\|}.
$$

If $N$ is large enough, so that
the assumptions of Theorem \ref{bestim} are satisfied, we have
$$
  \sigma_1(\mathbf{B}_{\lambda,N},\mathbf{C}_{\lambda,N}) \leq \frac{2 \sqrt{2}}{c_1(\lambda)} \beta_\lambda(N) \| \phi \|_\infty
 $$
in case $\lambda$ is an eigenvalue. This follows immediately from the fact that an exact eigenfunction
satisfies the boundary conditions.

Let as usual $\lambda=s(1-s)$ with $\mathrm{Im}(s) \geq 0$ and define $\sigma=\mathrm{Re} (s)$ and $r = \mathrm{Im}(s)$.
Assume that $\lambda'$ is not an eigenvalue and let $s',\sigma'$ and $r'$ defined as above.
For each $i$ we choose a hyperbolic cylinder $Z^L$ with $L$ large enough
such that $M_i \subset Z^L$ and such that $M_i$ is relatively compact in the interior of $Z^L$, so that
the distance between $\partial Z^L$ and $M_i$ greater than a positive number $d>0$. For the truncated eigenfunction
$\phi^{(N)}$ consider the boundary data $\phi^{(N)}_Z \oplus \mathbf{n} \phi^{(N)}_Z$ on $\partial Z^L$.
Using the resolvent kernel of the Laplace operator \footnote{this is also often referred to as the Green's function of the Helmholz operator}
on the hyperbolic cylinder (see Appendix \ref{resestcly}) we can reconstruct $\phi^{(N)}$ from its boundary data
$$
 \phi^{(N)}(x) = \int_{\partial Z^L} k^{Z}_s(x,x') \mathbf{n}_{x'} \phi^{(N)}_Z(x')  -  \phi^{(N)}_Z(x') \mathbf{n}_{x'}  k^{Z}_s(x,x') dx'.
$$
Now define
$$
  \phi^{(N)}_{s'}(x) = \int_{\partial Z^L} k^{Z}_{s'}(x,x') \mathbf{n}_{x'} \phi^{(N)}_Z(x')  -  \phi^{(N)}_Z(x') \mathbf{n}_{x'}  k^{Z}_{s'}(x,x') dx'.
$$
Then
\begin{gather*}
 \phi^{(N)}(x) -  \phi^{(N)}_{s'}(x) =\\= \int_{s'}^s
 \int_{\partial Z^L} \frac{\partial}{\partial \tilde s} k^{Z}_{\tilde s}(x,x') \mathbf{n}_{x'} \phi^{(N)}_Z(x')  -  \phi^{(N)}_Z(x') \mathbf{n}_{x'} \frac{\partial}{\partial \tilde s} k^{Z}_{\tilde s}(x,x') dx' d\tilde s.
\end{gather*}
Using Lemma \ref{greenestcylder} we obtain
$$
 \| \phi^{(N)} -  \phi^{(N)}_{s'} \|_{C^1(M)} \leq |s-s'| \tilde C_{M,s,s'}  \left( \| \phi^{(N)}_Z \|_{L^2(\partial Z)}^2 + \| \mathbf{n} \phi^{(N)}_Z \|_{L^2(\partial Z)}^2 \right)^{\frac{1}{2}}.
$$
Of course the $L^2$-norm of the boundary data is bounded from above by the $L^2$-norm of the boundary
data of the exact eigenfunction $\phi$ so that we obtain the bound
$$
 \| \phi^{(N)} -  \phi^{(N)}_{s'} \|_{C^1(M)} \leq |s-s'| \tilde C_{M,s,s'}  \sqrt{\mathrm{Vol}(\Gamma)} \| \phi \|_{C^1(M)}.
$$

\begin{theorem} \label{maindist}
 With the notation above we have for all $N>N_c$ and $\delta < \delta_c$
 $$
  \sigma_1(\mathbf{B}_{\lambda',N},\mathbf{C}_{\lambda',N}) \leq \frac{2 \sqrt{2}}{c_1(\lambda)} \left( \beta_\lambda(N) \| \phi \|_\infty +  |s-s'| \tilde C_{M,s,s'}   \sqrt{\mathrm{Vol}(\Gamma)} \| \phi \|_{C^1}  \right)
 $$
 if $\lambda$ is an eigenvalue with normalized eigenfunction $\phi$. Here
 $\| \phi \|_\infty$ and $\| \phi \|_{C^1}$ are bounded by explicit constants (Corollary \ref{lbound} and \ref{lboundzwei}).
\end{theorem}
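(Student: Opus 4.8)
The plan is to bound $\sigma_1(\mathbf{B}_{\lambda',N},\mathbf{C}_{\lambda',N})=\inf_{\mathbf{v}\neq 0}\|\mathbf{B}_{\lambda',N}\mathbf{v}\|/\|\mathbf{C}_{\lambda',N}\mathbf{v}\|$ from above by testing against a single well-chosen vector, namely the coefficient vector $\mathbf{v}'$ of the quasi-mode $\phi^{(N)}_{s'}$ constructed immediately before the statement. First I would check that $\phi^{(N)}_{s'}$ really has the form $\Psi(\mathbf{v}')$: it solves $(\Delta-\lambda')\phi^{(N)}_{s'}=0$ on each $M_i$, and since the cylinder resolvent kernel $k^Z_{s'}$ is invariant under the rotational symmetry of the cylinder it inherits from $\phi^{(N)}$ the property of containing only Fourier modes with $|m|\le N$, so $\phi^{(N)}_{s'}\in\bigoplus_i V^{(\lambda')}_N(Z_{\ell_i})$ and a well-defined $\mathbf{v}'$ exists. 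It then remains to estimate $\|\mathbf{B}_{\lambda',N}\mathbf{v}'\|$ from above and $\|\mathbf{C}_{\lambda',N}\mathbf{v}'\|$ from below (the latter also forcing $\mathbf{v}'\neq 0$); this is exactly the argument that was just carried out for the case when $\lambda$ itself is an eigenvalue, with one extra error term.

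For the numerator, the entries of $\mathbf{B}_{\lambda',N}\mathbf{v}'$ are the Simpson weights $\sqrt{\delta_\alpha b_i/3}$ times the pointwise jumps of the value, the normal derivative and the tangential derivative of $\Psi(\mathbf{v}')=\phi^{(N)}_{s'}$ across the identified segments $\Gamma_\alpha\leftrightarrow\Gamma_{\tilde\alpha}$. Since the genuine eigenfunction $\phi\in C^\infty(M)$ has vanishing jump in all three of these quantities, each jump of $\phi^{(N)}_{s'}$ equals the corresponding jump of $\phi^{(N)}_{s'}-\phi$ and is therefore bounded pointwise by a fixed multiple of $\|\phi^{(N)}_{s'}-\phi\|_{C^1(M)}$ (on each cylinder the $C^1$-norm controls the normal and the tangential derivative along the geodesic segments). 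As the Simpson weights over one segment sum to its length, this gives $\|\mathbf{B}_{\lambda',N}\mathbf{v}'\|\le c\,\sqrt{\mathrm{Vol}(\Gamma)}\,\|\phi^{(N)}_{s'}-\phi\|_{C^1(M)}$ with $c$ depending only on the geometry. I then split $\|\phi^{(N)}_{s'}-\phi\|_{C^1(M)}\le\|\phi^{(N)}_{s'}-\phi^{(N)}\|_{C^1(M)}+\|\phi^{(N)}-\phi\|_{C^1(M)}$: the first summand is at most $|s-s'|\tilde C_{M,s,s'}\sqrt{\mathrm{Vol}(\Gamma)}\|\phi\|_{C^1(M)}$ by the estimate established just above the theorem (which rests on Lemma \ref{greenestcylder}), and the second is at most $\beta_\lambda(N)\|\phi\|_\infty$ by Theorem \ref{bestim} applied on each cylinder $Z_{\ell_i}$, to which $\phi$ lifts as a bounded eigenfunction and on which the hypothesis $\tfrac{4\pi^2 N^2}{\ell_i^2\cosh^2 L}>\lambda$ holds because $N>N_c$.

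For the denominator, $\|\mathbf{C}_{\lambda',N}\mathbf{v}'\|^2$ is the composite-Simpson approximation of (a fixed multiple of) $\|\phi^{(N)}_{s'}|_\Gamma\|_{L^2(\Gamma)}^2+\|\mathbf{n}\phi^{(N)}_{s'}|_\Gamma\|_{L^2(\Gamma)}^2$. The Cauchy data of $\phi^{(N)}_{s'}$ on $\Gamma$ is $C^1(M)$-close to that of $\phi$ by the same triangle inequality, while for the normalized eigenfunction one has $\bigl(\|\phi|_\Gamma\|_{L^2(\Gamma)}^2+\|\mathbf{n}\phi|_\Gamma\|_{L^2(\Gamma)}^2\bigr)^{1/2}>c_1(\lambda)$ by the appendix bound (\ref{l2bound}) together with $\|\phi\|_{L^2(M)}=1$. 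Feeding this into the crude linear Simpson error bound and the explicit $L^\infty$- and $C^1$-estimates for eigenfunctions from the Appendix, one argues exactly as in the paragraph preceding the theorem — enlarging $N_c,\delta_c$ if necessary and keeping $\lambda'$ in a neighbourhood of $\lambda$ so that $|s-s'|\tilde C_{M,s,s'}\sqrt{\mathrm{Vol}(\Gamma)}\|\phi\|_{C^1}$ stays small compared with $c_1(\lambda)$ — to conclude that $\|\mathbf{C}_{\lambda',N}\mathbf{v}'\|>c_1(\lambda)/2$ for all $N>N_c$ and $\delta<\delta_c$.

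Dividing the two estimates and collecting the numerical and volume factors into the constants $c_1(\lambda)$ and $\tilde C_{M,s,s'}$ then yields the stated bound, and $\|\phi\|_\infty$ and $\|\phi\|_{C^1}$ are explicitly bounded by Corollary \ref{lbound} and Corollary \ref{lboundzwei}. The main obstacle I anticipate is the lower bound on the denominator: one must control the Simpson discretization error and the $C^1$-distance between $\phi^{(N)}_{s'}$ and the true eigenfunction simultaneously and uniformly in $\lambda'$ over the interval on which the estimate is used, so that $\|\mathbf{C}_{\lambda',N}\mathbf{v}'\|$ does not degenerate; a secondary, more routine point is to make the rotational-symmetry argument precise so that $\phi^{(N)}_{s'}$ genuinely lies in the span of the chosen basis functions.
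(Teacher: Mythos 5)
Your proposal is correct and follows essentially the same route as the paper: the test vector is the coefficient vector of the shifted quasimode $\phi^{(N)}_{s'}$, the numerator is bounded by the triangle inequality through $\phi^{(N)}$ and $\phi$ using Theorem \ref{bestim} and the resolvent-perturbation estimate from Lemma \ref{greenestcylder}, and the denominator is bounded below by $c_1(\lambda)/2$ via the Simpson-rule and trace estimates. The two points you flag as needing care (that $\phi^{(N)}_{s'}$ genuinely lies in $\bigoplus_i V^{(\lambda')}_N$ by rotational invariance of the cylinder kernel, and that the lower bound on $\|\mathbf{C}_{\lambda',N}\mathbf{v}'\|$ must be arranged at $\lambda'$ rather than at $\lambda$) are real but minor, and are in fact glossed over in the paper's own exposition.
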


Moreover, by Theorem \ref{main01} and the inequalities (\ref{simpson01}) and (\ref{geq01}) we have:

\begin{theorem} \label{mainm01}
 Suppose that $\mathbf{v} \in \mathbb{C}^{(4N+2)m}$ is a unit vector and $\epsilon=\|\mathbf{B}^0_{\lambda,N} \mathbf{v} \|$ and let
 $$\tau=C(\epsilon+ \frac{\delta^4 \ell_\Gamma}{180}  \left(\| D_t^4(\Psi(\mathbf{v})^2) \|_{L^\infty(\Gamma)} + 
 \|  D_t^4 \mathbf{n}_\Gamma(\Psi(\mathbf{v})^2) |_\Gamma\|_{L^{\infty}(\Gamma)}\right)) < 1,$$
 where $C=\tilde C \sqrt{C_1^2+C_2^2}$ are the constants in Section  \ref{s:4.1}. Then, there exists an eigenvalue in 
 the interval $[\lambda-\frac{(1+\lambda) \tau}{1-\tau},\lambda+\frac{(1+\lambda) \tau}{1-\tau}]$.
 In particular the estimate holds if $\epsilon=\sigma_1(\mathbf{B}^0_{\lambda,N})$ and $\mathbf{v}$ is a corresponding normalized singular vector.
\end{theorem}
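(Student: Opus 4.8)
The plan is to apply Theorem~\ref{main01} to the single function $\phi:=\Psi(\mathbf{v})$. On each piece $M_i$ this $\phi$ is a finite linear combination of the basis functions $\phi_k,\psi_k$ of the cylinder $Z_{\ell_i}$, hence real-analytic and a genuine solution of $(\Delta-\lambda)\phi=0$ on the interior of $M_i$, i.e. on $M\backslash\Gamma$; thus $\phi\in C^\infty(\overline{M\backslash\Gamma})$ and, in the notation of Theorem~\ref{main01}, we may take $\chi=0$ and $\eta=0$. The only normalization available is the lower bound (\ref{geq01}), namely $\|\phi\|_{L^2(M)}\ge\|\mathbf{v}\|=1$; since this inequality points in the favourable direction, the rescaled function $\phi_0:=\phi/\|\phi\|_{L^2(M)}$ has unit $L^2$-norm and satisfies $\tilde F^\Gamma(\phi_0)=\tilde F^\Gamma(\phi)/\|\phi\|_{L^2(M)}\le\tilde F^\Gamma(\phi)$, so it suffices to bound $\tilde F^\Gamma(\phi)$ from above by $\tau$.

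For that bound I would chain the two estimates of Section~\ref{s:4.1} with the quadrature estimate (\ref{simpson01}). First, $\tilde F^\Gamma(\phi)=\|h\|_{H^{-2}(M)}$ with $h=D\phi\,\delta'_\Gamma+D_n\phi\,\delta_\Gamma$, and combining the localization inequality (\ref{sob}) with the hyperbolic-plane resolvent bounds of Theorems~\ref{app1} and \ref{app2} gives $\tilde F^\Gamma(\phi)\le\tilde C\sqrt{C_1^2+C_2^2}\,F^\Gamma_{0,0}(\phi)=C\,F^\Gamma_{0,0}(\phi)$, with $C$ the relevant constant of Section~\ref{s:4.1} (the maximum over the geodesic segments). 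Second, by construction $\|\mathbf{B}^0_{\lambda,N}\mathbf{v}\|^2$ is the composite Simpson approximation of $(F^\Gamma_{0,0}(\Psi(\mathbf{v})))^2$, so (\ref{simpson01}) yields $(F^\Gamma_{0,0}(\phi))^2\le\epsilon^2+\frac{\delta^4\ell_\Gamma}{180}\bigl(\|D_t^4(\Psi(\mathbf{v})^2)\|_{L^\infty(\Gamma)}+\|D_t^4\mathbf{n}_\Gamma(\Psi(\mathbf{v})^2)|_\Gamma\|_{L^\infty(\Gamma)}\bigr)$, and an elementary square-root estimate turns this, after multiplication by $C$, into $\tilde F^\Gamma(\phi_0)\le\tilde F^\Gamma(\phi)\le\tau$. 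Since $\tau<1$ by hypothesis, Theorem~\ref{main01} applied to $\phi_0$ with $\eta=0$ (using closedness of the spectrum to absorb the boundary case $\tilde F^\Gamma(\phi_0)=\tau$) places an eigenvalue of $\Delta$ in $[\lambda-\frac{(1+\lambda)\tau}{1-\tau},\lambda+\frac{(1+\lambda)\tau}{1-\tau}]$. For the final assertion one only needs that $\sigma_1(\mathbf{B}^0_{\lambda,N})=\min_{\|\mathbf{v}\|=1}\|\mathbf{B}^0_{\lambda,N}\mathbf{v}\|$ is attained at a unit singular vector, so the displayed bound applies verbatim with $\epsilon=\sigma_1(\mathbf{B}^0_{\lambda,N})$.

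Since the substantive analytic work — the localization argument behind (\ref{sob}), the explicit resolvent estimates of the appendix producing $C_1,C_2$, and the Simpson error analysis behind (\ref{simpson01}) — is already in place, I do not expect a serious obstacle here: the statement is essentially a bookkeeping assembly. The one genuine subtlety is the normalization: because the basis functions were made orthonormal only on a rotation-invariant subset of each $M_i$ and not on all of $M$, one does not have $\|\phi\|_{L^2(M)}=1$ but merely $\ge1$, and one must use (as above) that passing to $\phi_0$ cannot increase $\tilde F^\Gamma$. A minor additional point is to verify that the $C$ entering $\tau$ is exactly the constant $\tilde C\sqrt{C_1^2+C_2^2}$ built in Section~\ref{s:4.1} for a finite union of geodesic segments, so that $\tau$ really is computable.
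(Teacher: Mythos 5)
Your assembly is exactly the paper's intended argument: the paper offers no separate proof of Theorem~\ref{mainm01} beyond citing Theorem~\ref{main01} together with the inequalities (\ref{simpson01}) and (\ref{geq01}), and your chain --- exactness of $\Psi(\mathbf{v})$ off $\Gamma$ so that $\chi=0$ and $\eta=0$, the normalization via (\ref{geq01}) with the observation that rescaling cannot increase $\tilde F^\Gamma$, the Section~\ref{s:4.1} constant $C=\tilde C\sqrt{C_1^2+C_2^2}$, and the Simpson quadrature error --- is precisely that argument written out. The one wrinkle (shared with the paper's own statement) is that (\ref{simpson01}) controls the difference of \emph{squares}, so the clean deduction gives $F^\Gamma_{0,0}(\Psi(\mathbf{v}))\le\epsilon+\sqrt{E}$ rather than $\epsilon+E$ for the Simpson term $E$; your ``elementary square-root estimate'' should be read with that caveat in mind.
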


\begin{remark}
 The term $\epsilon+ \frac{\delta^4 \ell_\Gamma}{180}  \left(\| D_t^4(\Psi(\mathbf{v})^2) \|_{L^\infty(\Gamma)}  + 
 \|  D_t^4 \mathbf{n}_\Gamma(\Psi(\mathbf{v})^2) |_\Gamma\|_{L^{\infty}(\Gamma)}\right)$
 can be minimized as another singular value problem, using the bounds on the derivatives of the basis functions
 that follow from the differential equation together with the Lemmata \ref{oni1} and \ref{oni3}.
 This results in a matrix, whose singular value is precisely the error estimate.
 Actual computations show that in practice the major contribution of the error comes from the first term.
 \end{remark}
 
 \begin{remark}
  An analog of Theorem  \ref{maindist} holds for the $n$-th relative singular values $\mathbf{B}_{\lambda',N}$
  with respect to $\mathbf{C}_{\lambda',N}$. In this case the term $|s-s'|$ gets replaced by
  the maximum of $\{ |s_k-s'| \mid k=1\ldots,n \}$, where $s_k$ correspond to $n$ pairwise orthogonal
  eigenvalues. We do not state this here as we want to keep the exposition simple.
  In the same way Theorem \ref{mainm01} applies to situations with multiplicities and eigenvalues that are close
  to each other (see remark \ref{remrem}).
 \end{remark}

\subsection{Description of the algorithm for a surface of genus $\genus$} \label{algozwei}

Since the fundamental solutions of the differential equation (\ref{dequ}) satisfy the bounds of Lemmata
\ref{oni1} and \ref{oni3} their higher derivatives can also be bounded explicitly simply by using the differential
equation.
We use coordinates $r=\sinh{\rho}$ and $t$ on the hyperbolic cylinders, so that the differential equation
is of the form
\begin{gather} \label{dequ2}
 \left(-(1+r^2)\frac{d}{dr} (1+r^2) \frac{d}{dr}  + \frac{4 \pi^2 k^2}{\ell^2} - (1+r^2)\lambda \right) \Phi_k(r) =0.
\end{gather}
Using subdivision into smaller intervals and the Taylor expansion around the boundary points of these intervals yields an
approximation $\Phi_k^{(M)}(r)$ to the exact solution by piecewise defined polynomial splines. Using the bounds
on the higher derivatives the error in the $C^1$-norm can be explicitly bounded from above. 
Since the measure on the hyperbolic
cylinder is equal to $dr dt$ in these coordinates, normalization of the basis functions can be done
within this error, simply by integrating these piecewise polynomials.
In this way, the matrices $\mathbf{B}^0_{\lambda,N}$, $\mathbf{B}_{\lambda,N}$ and $\mathbf{C}_{\lambda,N}$ can be computed with explicit error bounds.
The generalized singular value decomposition can then be done using any method from linear algebra.
Once a numerical generalized singular value decomposition has been obtained the error of the singular values
can be estimated from above as follows. Suppose that $\mathbf{U} \mathbf{D} \mathbf{V}$ is a numerically obtained
generalized singular value decomposition of $\mathbf{B}$ with respect to $\mathbf{C}$. Thus,  $\mathbf{D}$ is diagonal and $\mathbf{U}$ is unitary within a 
certain precision.
Moreover, $\mathbf{V}$ is unitary with respect to the inner product induced by $\mathbf{C}$, again within a certain precision.
Thus, $\| \mathbf{U}^* \mathbf{U} - 1 \| \leq \delta_1$ and  $\| \mathbf{V}^* \mathbf{C}^* \mathbf{C} \mathbf{V}  - \mathbf{C}^* \mathbf{C} \| \leq \delta_2$,
and $\| \mathbf{B} - \mathbf{U} \mathbf{D} \mathbf{V} \| \leq \delta_3$, where these three numbers reflect the errors. All three numbers can be
estimated from above using interval arithmetics for example by bounding the operator norm by the Hilbert-Schmidt
norm. Since the matrices $\mathbf{U}_0=\mathbf{U} (\mathbf{U}^* \mathbf{U})^{-1/2}$ and 
$\mathbf{V}_0=\mathbf{V} (\mathbf{V}^* \mathbf{C}^* \mathbf{C} \mathbf{V} )^{-1/2} (\mathbf{C}^* \mathbf{C})^{1/2}$
are unitary in the respective inner products the generalized singular values of $\mathbf{U}_0 \mathbf{D} \mathbf{V}_0$
are exactly the diagonal entries of $\mathbf{D}$. By the characterization of generalized singular values of $\mathbf{B}$ as the norm
distance of $\mathbf{B}$ to the rank $k$ operators it follows that the error of the generalized singular values of $\mathbf{B}$ bounded by
$$
 \| (\mathbf{B} - \mathbf{U}_0 \mathbf{D} \mathbf{V}_0) (\mathbf{C}^* \mathbf{C})^{-1/2}\|.
$$
Using functional calculus a longer calculation shows that this is bounded by
$$
 \frac{\delta_3}{\sigma_1(\mathbf{C})} + \| \mathbf{D} \| \left( \frac{\delta_1}{2} + \frac{\delta_2+\delta_1 \delta_2}{\sigma_1(\mathbf{C})^3} \right),
$$
where $\sigma_1(\mathbf{C})$ is the smallest singular value of $\mathbf{C}$. Here we assumed that $\delta_2 \leq \sigma_1(\mathbf{C})$.
Note that the smallest singular value of $\mathbf{C}$ can also be estimated from below using exactly the same
procedure for the ordinary singular value decomposition of $\mathbf{C}$. Thus, interval arithmetics can be applied to
obtain a lower bound for the generalized singular values of $\mathbf{B}$ with respect to $\mathbf{C}$.

An interval  $I$ is then tested for eigenvalues by computing a lower bound for the value of
$\sigma_1(\mathbf{B}_{x,N},\mathbf{C}_{x,N})$ for a discrete set of points $x$ in $I$. 
By theorem \ref{maindist} for large enough $N$
the distance between the points can be chosen such that eigenvalues can occur only near points where
the lower bound is small enough. For a fixed $\epsilon>0$ this algorithm then will normally find a discrete set
of points $y_i$ such that eigenvalue can occur only in $\epsilon$-neighborhoods of these points.
The presence of eigenvalues and their multiplicities can then be tested using the singular values
of $\mathbf{B}^0_{\lambda,N}$. An upper bound for small singular values can easily be obtained by first
finding the singular value decomposition numerically and then using the numerically obtained
singular vectors $\mathbf{v}$ to compute $\| \mathbf{B}^0_{\lambda,N} \mathbf{v} \|$. Since the derivatives of the basis functions
can be bounded using the differential equation (\ref{dequ}) and the bounds of Lemmata
\ref{oni1} and \ref{oni3} the error terms in Theorem \ref{mainm01} can directly be estimated from above.
Using interval arithmetics one can therefore obtain rigorous interval inclusions for the eigenvalues.

\begin{remark}
 Once the presence of a single eigenvalue is established in a small interval theorem
 \ref{maindist} can also be used to establish an error bound by narrowing the interval in which
 the eigenvalue can be located.
\end{remark}

The analysis of rigorous error bounds can be simplified considerably 
by replacing the exact basis functions with the piecewise defined polynomial basis functions 
themselves. These are of course orthogonal on any cylinder, thus normalization involves only integration
of piecewise defined polynomials which can be done explicitly.
Thus, the quasi-mode becomes a finite linear combination of products of piecewise
defined polynomials in $r$ and Fourier modes in $t$.  The so constructed function 
does not satisfy the equation $(\Delta-\lambda) \Phi=0$
but instead $(\Delta-\lambda) \Phi=\chi$, where due to the nature of the differential equation
$(1+r^2) \chi$ is again constructed out of piecewise defined polynomials and Fourier modes. Its $L^2$-norm
can very easily be estimated from above by integrating over the smallest cylinders that contain $M_i$.
The fourth derivatives in the error of Simpson's rule in theorem \ref{mainm01} can also 
be estimated more directly in this case as a bound can easily obtained from the coefficients
of the polynomial.

\section{Implementation and examples} \label{examples}

We have implemented our method in three different programs. 
\subsection{Non-rigorous implementations}
The first two programs are Fortran
programs. One works solely for genus $2$ surfaces and takes as coordinates
$mw$-Fenchel-Nielsen coordinates. The second program
takes as an input a labeled graph (encoding the way the surface is glued from $Y$-pieces)
and the Fenchel-Nielsen coordinates.  Both programs then follow the algorithm
described in this article.
The  basis functions are computed for each value of $\lambda$
by numerically solving the differential equation (\ref{dequ}) using the DLSODA routine from ODEPACK (see \cite{odepack01,odepack02}).
Normalization of the basis functions is achieved by numerical integration
of the solution of (\ref{dequ}).
In order to compute the singular values
$\sigma_1(\mathbf{B}^0_{\lambda,N})$ we use LAPACK routines. 
The generalized singular values $\sigma_1(\mathbf{B}_{\lambda,N},\mathbf{C}_{\lambda,N})$
are computed using $\mathbf{QR}$-decomposition.
A search algorithm then looks for the small minima of $\sigma_1(\mathbf{B}_{\lambda,N},\mathbf{C}_{\lambda,N})$ as a function
of $\lambda$.
A small enough step size guarantees that no eigenvalues are missed in the process.
The other singular values are also necessary in the search: a small second singular value
of $\mathbf{B}^0_{\lambda,N}$ implies that either the eigenvalue has multiplicity greater than one, or another
eigenvalue is very close. Our method can not distinguish between multiplicities
and close eigenvalues if they can not be separated within the given precision.
Both programs achieve accuracies close to machine precision for the low lying eigenvalues when the surface
is far enough from the boundary of Teichm\"uller space (so that the DLSODA algorithm can compute the basis functions
accurately enough). The code for the genus two program is available to the scientific community under GPLv3 
and can be obtained from \url{http://www1.maths.leeds.ac.uk/~pmtast/hyperbolic-surfaces/hypermodes.html}.

Another program was written in Mathematica. It uses Taylor's method on intervals of size of order $1/\sqrt{\lambda}$
in order to solve
the differential equation (\ref{dequ}) and compute the matrices $\mathbf{B}_{\lambda,N}$ and $\mathbf{C}_{\lambda,N}$
with arbitrarily high precision. The linear algebra such as $\mathbf{QR}$-decomposition and
search for the minimum is done within Mathematica. The achieved accuracy depends on the
computing time and the machine precision used. We were able to compute the first 70
digits of the first eigenvalue for the Bolza surface (see Section \ref{bolzasection}) using this program.

\subsection{Rigorous numerical implementations}
To demonstrate the power of our method
we modified the genus two Fortran and Mathematica codes 
to give rigorous error estimates for the eigenvalues.
As we described in the previous section we use the Taylor series method to solve 
the differential equation (\ref{dequ}) within a given accuracy and then follow the analysis
described in  section \ref{algozwei}.
The corresponding eigenvalue inclusions can be considered rigorous if one carefully
includes rounding errors in the analysis and is willing to trust the implementation of 
basic mathematical functions in Fortran and Mathematica.

\subsection{The Bolza surface} \label{bolzasection}

In order to demonstrate our method we consider a special surface of genus $2$.
The so-called Bolza surface has Fenchel-Nielsen $mw$-coordinates
\begin{gather*}
(\ell_1,t_1;\ell_2,t_2;\ell_3,t_3)=\\=
  (2 \,\arccosh{(3+2 \sqrt{2})},\frac{1}{2};2\, \arccosh{(1+\sqrt{2})},0;2\, \arccosh{(1+\sqrt{2})},0).
\end{gather*}
This surface is known to maximize the length of the systole in genus $2$ and it also maximizes
the order of the symmetry group (see e.g. \cite{MR749104,MR1250756,MR1305753}).
The first eigenvalue on the Bolza surface was estimated by Jenni \cite{MR749104} remarkably 
accurately to be in the interval $[3.83,3.85]$. Aurich and Steiner \cite{Aurich:1989} (see also \cite{ASS88})give the value 
$3.8388$ which was obtained using the finite element method.
Figure \ref{sigbolza} shows a plot of the singular value after $\mathbf{QR}$-decomposition ($N=60$, $\delta=0.001$) of the 
matrix $A \oplus B$ and projection onto the first direct summand.
Note that at the minima the function is very small and order $10^{-12}$.
\begin{figure}[htp]
\centering \label{sigbolza}
\includegraphics*[width=11cm]{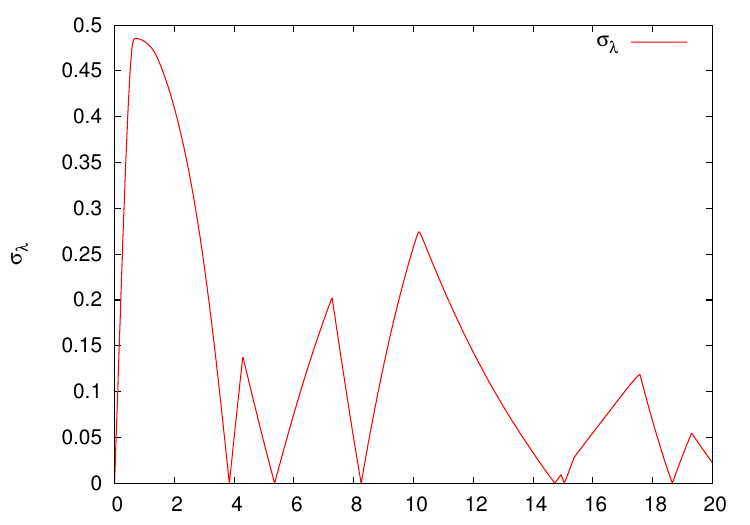}
\caption{Smallest singular value after $\mathbf{QR}$-decomposition for the Bolza surface as a function of $\lambda$.}
\end{figure} 

The search algorithm in the Fortran program finds the first eigenvalue within $8$ digits of precision.
Using multiple precision in Mathematica we obtained the value.
$$
 \lambda_1 \approx 3.8388872588421995185866224504354645970819150157
$$
where we believe that all digits are correct. The rigorous implementation that uses very simple bounds in its
current form gives a rigorous error bound of $10^{-6}$ (which can be improved by simply investing more computing time).

We believe that $\lambda_1$ is important as we conjecture that this is the maximal
value of the first eigenvalue of the Laplace operator for constant negative curvature $-1$
in genus $2$. This conjecture is supported by our calculations in Teichm\"uller space (the corresponding
analysis will be discussed elsewhere) and is in line with the findings in  (\cite{MR2202582}),
where it is shown \footnote{this proof relies on a lower bound for the eigenvalue of a mixed Dirichlet-Neumann problem that
was obtained numerically from a finite element method} that the maximal value of the first eigenvalue
of the Laplace operator with respect to any metric fixing the volume is attained for a singular
metric on the Bolza surface (\cite{MR2202582}).

\section{Calculation of the Spectral Zeta function and the Spectral Determinant} \label{zeta}

The spectral zeta function $\zeta_{\Delta}(s)$ is defined as the meromorphic continuation of the function
$$
 \zeta_{\Delta}(s)=\sum_{i=1}^\infty \lambda_i^{-s},
$$
where $(\lambda_i)_{i \in \nz_0}$ are the eigenvalues repeated according to their multiplicities.
The above sum converges for $\Re(s)>1$. As usual a meromorphic continuation to the whole complex plane
is constructed using the Mellin transform
$$
 \zeta_{\Delta}(s) = \frac{1}{\mathrm{\Gamma(s)}} \int_0^\infty t^{s-1} \left( \mathrm{tr}(\mathrm{e}^{-\Delta t})-1 \right) dt.
$$
and the fact that the heat kernel has an asymptotic expansion, by splitting the above integral
into an integral over $[0,1]$ and one over $[1,\infty)$. From the heat expansion it follows that 
zero is not a pole, which allows one to define the zeta-regularized determinant 
${\det}_\zeta(\Delta)$ of the Laplace operator by
$$
 \log {\det}_\zeta(\Delta) = - \zeta'_{\Delta}(0).
$$
In the examples we will also consider the value of the zeta function at the point $-1/2$.
In physics this value is often referred to as the Casimir energy or vacuum energy.

For our purposes we use the following splitting
$$
 \zeta_{\Delta}(s) = \frac{1}{\mathrm{\Gamma}(s)} \left(\int_0^\epsilon t^{s-1} \left( \mathrm{tr}(\mathrm{e}^{-\Delta t})-1 \right) dt + 
 \int_\epsilon^\infty t^{s-1} \left( \mathrm{tr}(\mathrm{e}^{-\Delta t})-1 \right) dt \right),
$$
where $\epsilon>0$.

The Selberg trace formula applied to the heat trace (see e.g. Equ. (193) in \cite{Mkl04}) is
\begin{gather} \label{stf}
 \mathrm{tr}(\mathrm{e}^{-\Delta t}) = \frac{\mathrm{Vol}(M)\mathrm{e}^{-\frac{t}{4}}}{4 \pi t} \int_0^\infty \frac{\pi \mathrm{e}^{-r^2 t}}{\cosh^2(\pi r)} dr +
 \sum_{n=1}^\infty \sum_{\gamma}  \frac{\mathrm{e}^{-t/4}}{\sqrt{4 \pi t}} 
  \frac{\ell(\gamma) \mathrm{e}^{-\frac{n^2 \ell(\gamma)^2}{4 t}}}{2 \sinh\frac{n \ell(\gamma)}{2}},
\end{gather}
where the second sum in the second term is over the set of primitive closed geodesics $\gamma$.
Splitting the integral and using the standard analytic continuation gives for $\Re(s)> -N$
$$
  \zeta_{\Delta}(s) =  \frac{1}{\mathrm{\Gamma}(s)}( T_1^{\epsilon}(s)+ T_2^{\epsilon,N}(s) + 
  T_3^{\epsilon,N}(s) + T_4^{\epsilon,N}(s)),
$$
where
\begin{gather*}
  T_1^\epsilon(s) =  \sum_{i=1}^\infty \lambda_i^{-s} \mathrm{\Gamma}(s,\epsilon \lambda_i),\\
  T_2^{\epsilon,N}(s) = \sum_{k=0}^N \frac{a_k \epsilon^{s+k-1}}{s+k-1},\\
  T_3^{\epsilon,N}(s) = \frac{\mathrm{Vol}(M)}{4 \pi} \int_0^\infty  I_N(r) dr,\\ 
  T_4^{\epsilon,N}(s) = \sum_{n=1}^\infty \sum_{\gamma} \int_0^\epsilon t^{s-1} \frac{\mathrm{e}^{-t/4}}{\sqrt{4 \pi t}} 
  \frac{\ell(\gamma) \mathrm{e}^{-\frac{n^2 \ell(\gamma)^2}{4 t}}}{2 \sinh\frac{n \ell(\gamma)}{2}} dt,
\end{gather*}
where
$$
  I_N^\epsilon(r) = \int_0^\epsilon t^{s-2}  \left( \mathrm{e}^{-(r^2+\frac{1}{4}) t } - 
 \sum_{k=0}^N \frac{(-1)^k}{k!} (r^2+\frac{1}{4})^k t^k \right) dt,
$$
the $a_k$ are given by
$$
 a_k = \frac{\mathrm{Vol}(M)}{4 \pi} \int_0^\infty \frac{(-1)^k}{k!} \frac{\pi (r^2+1/4)^k}{\cosh^2(\pi r)} dr - \delta_{1,k},
$$
and $\mathrm{\Gamma}(x,y)$ denotes the incomplete Gamma function
$$
 \mathrm{\Gamma}(x,y) = \int_y^\infty t^{x-1} \mathrm{e}^{-t} dt.
$$

Differentiation results in the following formula for the spectral determinant.

$$
 -\log {\det}_{\zeta} \Delta = \zeta'_\Delta(0) = L_1 + L _2 + L_3
$$
where
\begin{gather*}
 L_1^\epsilon = \sum_{i=1}^\infty \mathrm{\Gamma}(0,\epsilon \lambda_i),\\
 L_2^\epsilon = - \frac{\mathrm{Vol}(M)}{4 \pi \epsilon} -\left(\frac{\mathrm{Vol}(M)}{12 \pi} + 1\right)(\gamma + \log(\epsilon)) 
  +  \frac{\mathrm{Vol}(M)}{4} \times \\ \int_0^\infty 
 \text{sech}^2(\pi  r) \left( \frac{1-\mathrm{E}_2 \left(\epsilon (r^2+\frac{1}{4})\right)}{\epsilon}+(r^2+\frac{1}{4})
  \left(\gamma-1+\log(\epsilon(r^2+1/4)) \right) \right) dr,\\
  L_3^\epsilon =  \sum_{n=1}^\infty \sum_{\gamma} \int_0^\epsilon \mathrm{e}^{-t/4} \frac{\ell_i \mathrm{e}^{-\frac{n^2 \ell(\gamma)^2}{4 t}}}{4 \sqrt{\pi } t^{3/2} \sinh{\left(\frac{1}{2} n \ell(\gamma) \right)}} dt,
\end{gather*}
and $\mathrm{E}_2(x)$ is the generalized exponential integral which equals $x \;\mathrm{\Gamma}(-1,x)$.
All the integrals have analytic integrands and can be truncated with exponentially small error. They can therefore
be evaluated to high accuracy using numerical integration.

For fixed $s$ and $\epsilon>0$ not too small the sums over the eigenvalues converge rather quickly  and therefore 
$T_1^\epsilon(s)$ and $L_1^\epsilon$ can be computed quite accurately from the first eigenvalues only.
The error made by summing only over  finitely many eigenvalues can be estimated directly by Lemma \ref{heattrb}.
For not too large $\epsilon>0$ the sums over the length spectrum are also very fast convergent.
As they are exponentially decaying as $\epsilon$ tends to zero one can either neglect this contribution
by choosing $\epsilon$ small enough or one can compute them 
using a finite part of the length spectrum. The error introduced
in this way is also exponentially decreasing as $\epsilon \to 0$ and it can be explicitly estimated using 
 the Selberg trace formula  as follows. If the length of the shortest closed geodesic is $L$ the second term in
 the Selberg trace formula Equ. (\ref{stf}) is for all $t < T<\sqrt{L^2+1}-1$ bounded by
\begin{gather}\label{errheattr}
 F_T(t) = \sqrt{\frac{T}{t}} \tr(\mathrm{e}^{-\Delta T}) \mathrm{e}^{\frac{T}{4}+\frac{L^2}{4T}} \mathrm{e}^{-\frac{L^2}{4t}}
\end{gather} 
and the heat trace for a fixed $T$ can be estimated by Lemma \ref{heattrb}. Therefore,
we have for all $T> \epsilon$
\begin{gather*}
 | T_4^{\epsilon,N}(s) | \leq \int_0^\epsilon t^{|s-1|} F_T(t) dt\\
 | L_3^\epsilon | \leq \int_0^\epsilon \frac{1}{2 t} F_T(t) dt,
\end{gather*}
and the right
hand side converges exponentially fast to zero as $\epsilon \to 0$.
 An analogous formula holds for the error if elements in the length
spectrum up to length $L$ are taken into account.

Of course the short geodesics dominate the contribution from the length spectrum. 
Note however that as a consequence of the collar theorem
there are at most $3 \genus-3$ closed geodesics of length smaller that $2 \,\mathrm{arcsinh}(1)	 \approx 1.7627$
on a surface of genus $\genus>1$ (see e.g. \cite{Buser:1992}). 

\section{Spectral Determinants and Casimir Energy for special surfaces} \label{zetaexa}

In this section we compute the values of the Casimir energy and the spectral determinant for
the three isolated surfaces of genus two with large symmetry group as well as for other examples
of genus two and genus three surfaces that have been treated in the literature before. All Fenchel-Nielsen coordinates
are given in $mw$-form, i.e. the $3$-valent graph describing this decomposition consists of two
vertices that are connected by three edges. For all the surfaces we first used the Fortran double precision 
implementation of our program to generate a list of eigenvalues. Even though our algorithm allows us to choose the
stepsize in such a way that we are guaranteed not to miss an eigenvalue the current implementation
does not achieve that automatically. In our computations we have chosen to check completeness of the list of eigenvalues
by using the Selberg trace formula applied to the heat trace. This is possible as the absolute value of the contribution of the length spectrum in Selberg's trace formula
is bounded from above by  $F_T(t)$, defined in Equ. (\ref{errheattr}), so that missed eigenvalues can easily be detected comparing the heat trace computed
from the Selberg trace formula. Note that unlike heuristic checks using Weyl's law this gives a rigorous way to check completeness up to a certain value.
The data files containing the list of first non-zero eigenvalues for all the examples below have been uploaded to the arXiv
together with this article. They can also be found at the link \url{http://www1.maths.leeds.ac.uk/~pmtast/publications/eigdata/datafile.html}. 

\subsection{The Bolza surface}

The Bolza surface has symmetry group $S_4 \times \mathbb{Z}_2$. Its order, $48$, thus maximizes
the order of the symmetry group (of orientation preserving isometries) amongst all genus $2$ hyperbolic surfaces. The Bolza surface is referred to
as the regular octagon (and sometimes as the Hadamard-Gutzwiller model) in the physics literature
as it can be obtained from a regular octagon in hyperbolic space by identifying opposite sides.
Its Fenchel-Nielsen coordinates are
\begin{gather*}
(\ell_1,t_1;\ell_2,t_2;\ell_3,t_3)=\\=
  (2\,\arccosh{(3+2 \sqrt{2})},\frac{1}{2};2\,\arccosh{(1+\sqrt{2})},0;2\, \arccosh{(1+\sqrt{2})},0).
\end{gather*}

We computed the spectral determinant as well as the Casimir energy $\zeta(-1/2)$ and obtained
\begin{gather*}
 \mathrm{det}_\zeta(\Delta) \approx 4.72273280444557,\\
 \zeta_\Delta(-1/2) \approx -0.65000636917383,
\end{gather*}
where we believe that all decimal places are correct. This accuracy was obtained
by computing the first $10$ eigenvalues with a $30$ digit precision, the first
$500$ eigenvalues in quad precision and subsequently
by using the simple length spectrum up to $\ell=9$. We relied on the multiplicities of the length spectrum
as  computed in \cite{MR980200}. 
Note that the spectral determinant and
the Casimir energy can both be computed accurately within 5 decimal places without taking
the length spectrum into account at all, using the eigenvalues only.
Completeness of the list of the first $500$ eigenvalues was checked using a list of $700$ computed
eigenvalues. To compute the first $500$ eigenvalues of the Bolza surface to a precision of $12$ digits about $10000$
$\lambda$-evaluations of generalized singular value decomposition were needed. This took about
$10$ minutes on a $2.5$ GHz Intel Core i5 quad core processor (where parallelization was used).

\begin{figure}[htp]
\centering \label{zetaplot}
\includegraphics*[width=11cm]{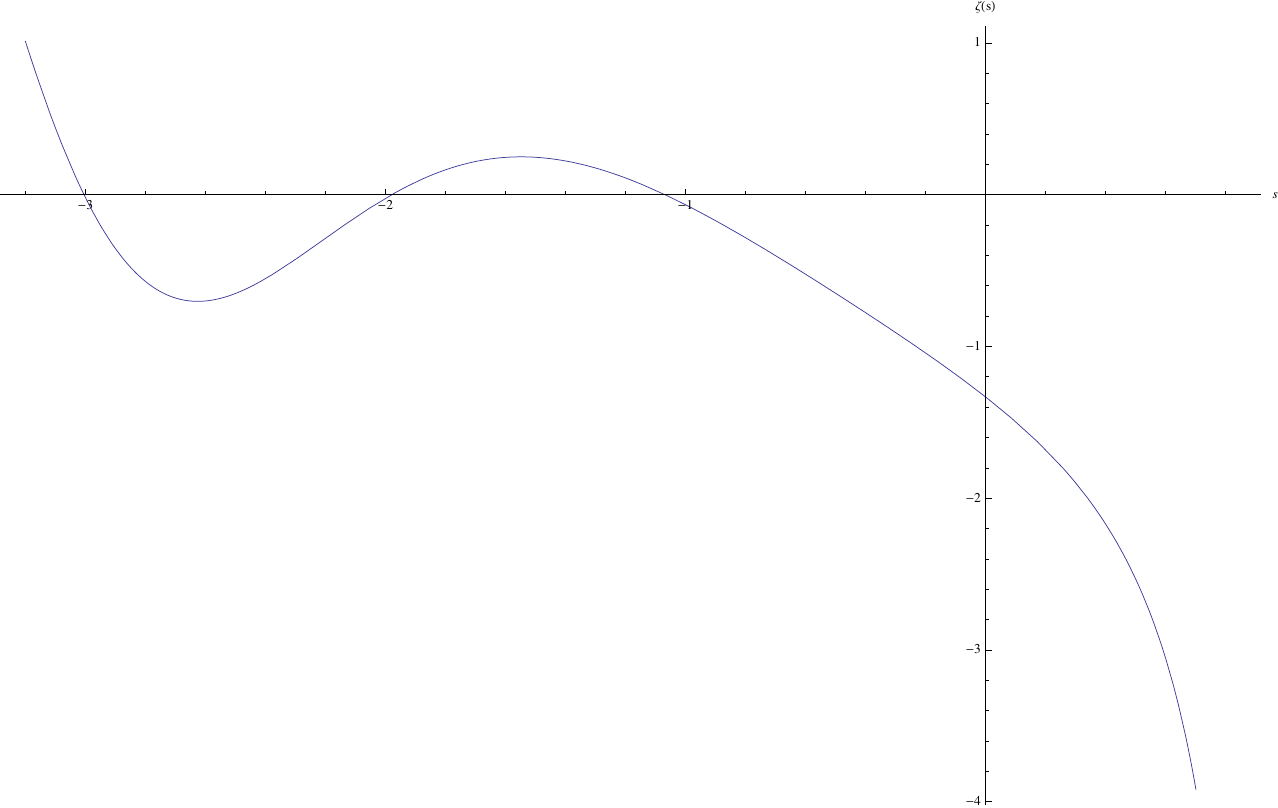}
\caption{$\zeta_\Delta(s)$ as a function of $s$ for the Bolza surface}
\end{figure} 

We conjecture that the spectral determinant is maximized in genus $2$ for the Bolza surface and we therefore
believe that the above number is important. Our computations show that Bolza surface as a point in Teichm\"uller space
is indeed a local maximum. Note that for the Bolza surface is known to be a critical point for the spectral
determinant.

\subsection{The surface with symmetry group $D_6 \times \mathbb{Z}_2$} \label{bolz}

This surface has Fenchel-Nielsen coordinates (see \cite{Buser:2005}, sec. 3.5)
$$
(\ell_1,t_1;\ell_2,t_2;\ell_3,t_3)=(2\, \arccosh(2),0;2 \,\arccosh(2),0;2 \,\arccosh(2),0).
$$
It is also known to be a critical point for any modular invariant such as the Casimir energy or the spectral
determinant.
Computing the first $500$ eigenvalues with double precision and neglecting the contribution from
the length spectrum other than from the $12$ primitive closed geodesics of length
$2 \, \arccosh(2)$ we obtain
\begin{gather*}
 \mathrm{det}_\zeta(\Delta) \approx 4.428000668,\\
 \zeta_\Delta(-1/2) \approx -0.67250924.
\end{gather*}

\subsection{The surface with symmetry group $\mathbb{Z}_5 \times \mathbb{Z}_2$}

This surface has Fenchel-Nielsen coordinates (see \cite{Buser:2005}, sec. 3.5)
\begin{gather*}
(\ell_1,t_1;\ell_2,t_2;\ell_3,t_3)= \\=(2 \,\arccosh{\frac{3+\sqrt{5}}{2}},0; 2\,\arccosh{\frac{2+\sqrt{5}}{2}},\frac{1}{2}; 
2\,\arccosh{\frac{9+3 \sqrt{5}}{4}},\frac{1}{2}).
\end{gather*}
It is also known to be a critical point for any modular invariant such as the Casimir energy or the spectral
determinant.
Computing the first $500$ eigenvalues with double precision and neglecting the contribution from
the length spectrum we obtain
\begin{gather*}
 \mathrm{det}_\zeta(\Delta) \approx 4.630575,\\
 \zeta_\Delta(-1/2) \approx -0.656986.
\end{gather*}

\subsection{The Gutzwiller octagon} \label{regoct}

This is a surface obtained from a regular hyperbolic octagon in the upper half space
using a side identification different from that for the Bolza surface (see \cite{Ninnemann:1995} where also the low
lying spectrum of this surface was computed).
The Fenchel-Nielsen coordinates of this surface are given by
\begin{gather*}
  (\ell_1,t_1;\ell_2,t_2;\ell_3,t_3)=\\=
  (4 \,\arccosh{\frac{\sqrt{2}+1}{\sqrt{2}}},\frac{1}{4};
   2 \,\arccosh{\frac{\sqrt{2}+1}{\sqrt{2}}},\frac{1}{2};2 \,\arccosh{\frac{\sqrt{2}+1}{\sqrt{2}}},\frac{1}{2}).
\end{gather*}

Computing the first $500$ eigenvalues with double precision and neglecting the contribution from
the length spectrum we obtain
\begin{gather*}
 \mathrm{det}_\zeta(\Delta) \approx 3.76048,\\
 \zeta_\Delta(-1/2) \approx -0.72747.
\end{gather*}

\subsection{The example of Aurich and Steiner}

Eigenvalue statistics of a certain generic octagon was investigated in \cite{MR1214552}. The Fenchel-Nielsen
parameters of this surface are roughly (see \cite{ABCKS}) \footnote{see Acknowledgements}.
\begin{gather*}
(\ell_1,t_1) \approx (3.717414183638, 0.0304758025243),\\
(\ell_2,t_2) \approx (3.303402988815, 0.2711895405183),\\
(\ell_3,t_3) \approx (3.408324727953, 0.2187479424048).
\end{gather*}
Computing the first $500$ eigenvalues with double precision and neglecting the contribution from
the length spectrum we obtain
\begin{gather*}
 \mathrm{det}_\zeta(\Delta) \approx 3.959168,\\
 \zeta_\Delta(-1/2) \approx -0.715195.
\end{gather*}

\subsection{Examples of Rocha and Pollicott}

These are the examples treated in \cite{MR1474163}, where approximate values for their
spectral determinants are given.

The first example has Fenchel-Nielsen coordinates 
$$
(\ell_1,t_1;\ell_2,t_2;\ell_3,t_3)= (5.05,0;1,0;0.9,0).
$$
Computing the first $500$ eigenvalues with double precision and taking
into acount only the short primitive geodesics in the length spectrum we obtain
\begin{gather*}
 \mathrm{det}_\zeta(\Delta) \approx 0.395833,\\
 \zeta_\Delta(-1/2) \approx -1.817507.
\end{gather*}

The second example has Fenchel-Nielsen coordinates 
$$
(\ell_1,t_1;\ell_2,t_2;\ell_3,t_3)= (0.98,0;3.5,0;0.98,0).
$$
Computing the first $500$ eigenvalues with double precision and taking
into acount only the short primitive geodesics in the length spectrum we obtain
\begin{gather*}
 \mathrm{det}_\zeta(\Delta) \approx 0.6114618,\\
 \zeta_\Delta(-1/2) \approx -1.6541313.
\end{gather*}

The third example has Fenchel-Nielsen coordinates 
$$
(\ell_1,t_1;\ell_2,t_2;\ell_3,t_3)= (5,0;1,0;1,0).
$$
Computing the first $500$ eigenvalues with double precision and taking
into acount only the short primitive geodesics in the length spectrum we obtain
\begin{gather*}
 \mathrm{det}_\zeta(\Delta) \approx 0.5124672,\\
 \zeta_\Delta(-1/2) \approx -1.6591527.
\end{gather*}

The values do not agree with the values obtained in  \cite{MR1474163}.
We believe that not enough lengths of primitive geodesics were taken
into account there.

\subsection{An example of genus three}

The following example has genus three and is a double cover of the Bolza surface.
It is obtained by cutting the two copies of the Bolza surface open along the two geodesics
of length $2\,\arccosh{(1+\sqrt{2})}$ used in its pants decomposition (see section \ref{bolz})
The two resulting surfaces of type $(0,4)$ are then glued by identifying each boundary
with the the corresponding boundary on the other copy of the surface.
We computed the first $600$ eigenvalues on this surface and obtained
\begin{gather*}
 \mathrm{det}_\zeta(\Delta) \approx 9.6507,\\
 \zeta_\Delta(-1/2) \approx -0.7802.
\end{gather*}
For this accuracy the contribution of the length spectrum can be neglected.
\appendix

\section{Estimates on the basis functions}

This section deals with growth estimates and bounds on the function
$\Phi(\rho)$ that satisfies the differential equation
\begin{gather}
 (-\frac{1}{\cosh{\rho}}\frac{d}{d\rho} \cosh{\rho} \frac{d}{d\rho} + \frac{A}{\cosh^2{\rho}} -\lambda) \Phi(\rho)=0
\end{gather}
with initial conditions
$$
 \Phi(0) =a, \quad \frac{d}{d \rho} \Phi(0) = b.
$$
Substituting $\varphi=2 \arctan(\tanh{\rho/2})$ this can also be written
as
$$
 (-\frac{d^2}{d\varphi ^2} + A - \frac{\lambda}{\cos^2{\varphi}}) \Psi(\varphi)=0,
$$
where $\Psi(\varphi)=\Phi(\rho(\varphi))$ satisfies $\Psi(0) =a$ and  $\frac{d}{d \rho} \Psi(0) = b$.
The derivatives in different coordinates are related by $\cosh{\rho} \frac{d}{d\rho} = \frac{d}{d\varphi}$.
Now define a  potential
$$
 V(\varphi)= A - \frac{\lambda}{\cos^2{\varphi}}.
$$

\begin{lemma} \label{oni3}
 Suppose that $\lambda>0$ and $V(\varphi)$ does not vanish on an interval $I \subset [0,\pi/2)$.
 Then the function $E(\varphi)=|\Psi(\varphi)|^2 - \frac{1}{V(\varphi)} |\Psi'(\varphi)|^2$ is non-increasing on $I$.
\end{lemma}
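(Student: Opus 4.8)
The plan is to show directly that $E'(\varphi)\le 0$ on $I$, so that continuity of $E$ yields monotonicity. Since $V$ does not vanish on $I$, the function $E(\varphi)=\Psi\overline\Psi-V^{-1}\Psi'\overline{\Psi'}$ is $C^1$ on $I$, and I would first differentiate it:
\[
 E'=\Psi'\overline\Psi+\Psi\overline{\Psi'}-\frac{1}{V}\left(\Psi''\overline{\Psi'}+\Psi'\overline{\Psi''}\right)+\frac{V'}{V^2}\,|\Psi'|^2 .
\]

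Next I would feed in the differential equation. Since $\Psi''=V\Psi$ and (as $V$ is real-valued) $\overline{\Psi''}=V\overline\Psi$, the bracketed term becomes $-\frac{1}{V}\left(V\Psi\overline{\Psi'}+V\Psi'\overline\Psi\right)=-\left(\Psi\overline{\Psi'}+\Psi'\overline\Psi\right)$, which exactly cancels the first two summands. Hence
\[
 E'(\varphi)=\frac{V'(\varphi)}{V(\varphi)^2}\,|\Psi'(\varphi)|^2 .
\]
Finally I would compute $V'$: from $V(\varphi)=A-\lambda\cos^{-2}\varphi$ one gets $V'(\varphi)=-\dfrac{2\lambda\sin\varphi}{\cos^3\varphi}$, which is $\le 0$ for every $\varphi\in[0,\pi/2)$ because $\lambda>0$, $\sin\varphi\ge 0$ and $\cos\varphi>0$ there. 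Therefore $E'\le 0$ on $I$, and $E$ is non-increasing on $I$.

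There is essentially no serious obstacle: the argument is a two-line computation, and the only point requiring attention is the cancellation in the second step. That cancellation works precisely because the equation is written in the $\varphi$-coordinate, where it has no first-order term — which is exactly why the substitution $\varphi=2\arctan(\tanh(\rho/2))$ was made. If one prefers to work with real-valued $\Psi$ (the case in all our applications, where $A=4\pi^2k^2/\ell^2$ and the initial data are real), the identical computation with $|\Psi|^2$ replaced by $\Psi^2$ and $|\Psi'|^2$ by $(\Psi')^2$ goes through verbatim.
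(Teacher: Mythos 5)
Your proof is correct and is essentially the paper's own argument: differentiate $E$, use $\Psi''=V\Psi$ to cancel the zeroth-order terms, and conclude from $V'(\varphi)=-2\lambda\sin\varphi/\cos^3\varphi\le 0$ on $[0,\pi/2)$ that $E'=\frac{V'}{V^2}|\Psi'|^2\le 0$. The paper states exactly this identity for $E'$ and the same sign argument, so there is nothing to add.
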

\begin{proof}
 Simple differentiation gives
 $$
  E'(\varphi) = - \frac{1}{(V(\varphi))^2}  \frac{2 \lambda \sin{\varphi}}{\cos^3{\varphi}} |\Psi'(\varphi)|^2 \leq 0,
 $$
 which implies the statement.
\end{proof}

\begin{lemma} \label{growth1}
 Suppose that $\lambda>0$ and $V(\varphi) \geq 0$ on some interval $[\varphi_0,\varphi_1] \subset [0,\pi/2)$
 and assume $\Phi(\varphi_0) \geq 0$ and $\Phi'(\varphi_0) \geq 0$.
 Define
 $$
  c_{\varphi_0}(\varphi)= \int_{\varphi_0}^\varphi \sqrt{V(t)} dt.
  $$
 Then for all $\varphi \in [\varphi_0,\varphi_1]$ we have
 $$
  \Phi(\varphi) \geq \Phi(\varphi_0) \cosh \left (c_{\varphi_0}(\varphi) \right) + \frac{1}{\sqrt{V(\varphi_0)}} \Phi'(\varphi_0) \sinh\left( c_{\varphi_0}(\varphi) \right).
 $$
 and
 $$
 \Phi'(\varphi) \geq \sqrt{V(\varphi)} \Phi(\varphi_0) \sinh \left (c_{\varphi_0}(\varphi) \right) +  \sqrt{\frac{V(\varphi)}{V(\varphi_0)}} \Phi'(\varphi_0) \cosh \left( c_{\varphi_0}(\varphi) \right).
 $$
\end{lemma}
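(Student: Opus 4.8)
The plan is to eliminate the variable coefficient by reparametrizing the independent variable through the ``phase'' $u = c_{\varphi_0}(\varphi) = \int_{\varphi_0}^\varphi \sqrt{V(t)}\,dt$, which converts the equation $\Phi'' = V\Phi$ (with ${}' = d/d\varphi$) into a perturbation of the model equation $y'' = y$ whose only extra term is a first-order term with a favourable sign. That sign is available because $V(\varphi) = A - \lambda/\cos^2\varphi$ is non-increasing on $[0,\pi/2)$, since $V'(\varphi) = -2\lambda\sin\varphi/\cos^3\varphi \le 0$ — the same computation that underlies Lemma \ref{oni3}. Throughout I may assume $\varphi_0 < \varphi_1$ and, by monotonicity of $V$ (a zero of $V$ on $[\varphi_0,\varphi_1]$ can only occur at the right endpoint), that $V > 0$ on $[\varphi_0,\varphi_1)$; the value at $\varphi_1$ will be recovered at the end by continuity in $\varphi$, since $c_{\varphi_0}$ and both sides of each asserted inequality extend continuously to $\varphi_1$.

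First I would establish a sign bootstrap: $\Phi \ge 0$ and $\Phi' \ge 0$ on all of $[\varphi_0,\varphi_1]$. On the maximal subinterval $[\varphi_0,\varphi^\ast]$ on which $\Phi \ge 0$, one has $\Phi'' = V\Phi \ge 0$, so $\Phi'$ is non-decreasing and stays $\ge \Phi'(\varphi_0) \ge 0$, whence $\Phi$ is non-decreasing and stays $\ge \Phi(\varphi_0) \ge 0$; since $\Phi(\varphi^\ast) \ge 0$ and $\Phi'(\varphi^\ast) \ge 0$, continuity rules out $\varphi^\ast < \varphi_1$.

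Next I would carry out the change of variables. Writing $\tilde\Phi(u) = \Phi(\varphi)$, a direct computation using $\Phi'' = V\Phi$ gives
\[
 \frac{d^2\tilde\Phi}{du^2} = \tilde\Phi + \mu(\varphi)\,\frac{d\tilde\Phi}{du}, \qquad \mu = -\frac{V'}{2V^{3/2}} \ge 0,
\]
with $\tilde\Phi(0) = \Phi(\varphi_0)$ and $\frac{d\tilde\Phi}{du}(0) = \Phi'(\varphi_0)/\sqrt{V(\varphi_0)}$. Since $\frac{d\tilde\Phi}{du} = \Phi'/\sqrt{V} \ge 0$ by the bootstrap, the extra term $\mu\,\frac{d\tilde\Phi}{du}$ is a non-negative source. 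Letting $y(u) = \Phi(\varphi_0)\cosh u + \frac{1}{\sqrt{V(\varphi_0)}}\Phi'(\varphi_0)\sinh u$ solve $y'' = y$ with the same initial data, and $w = \tilde\Phi - y$, we get $w(0) = w'(0) = 0$ and $w'' \ge w$. Then $a := w' - w$ satisfies $a(0) = 0$ and $a' = w'' - w' \ge -a$, so $(e^u a)' \ge 0$, hence $a \ge 0$, i.e. $w' \ge w$; feeding this into $(e^{-u}w)' = e^{-u}(w'-w) \ge 0$ with $w(0) = 0$ yields $w \ge 0$, and therefore also $w' \ge w \ge 0$.

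Unwinding, $w \ge 0$ is exactly the first asserted inequality, $\Phi(\varphi) \ge \Phi(\varphi_0)\cosh c_{\varphi_0}(\varphi) + \frac{1}{\sqrt{V(\varphi_0)}}\Phi'(\varphi_0)\sinh c_{\varphi_0}(\varphi)$, while $w' \ge 0$ reads $\Phi'(\varphi)/\sqrt{V(\varphi)} \ge y'(u) = \Phi(\varphi_0)\sinh c_{\varphi_0}(\varphi) + \frac{1}{\sqrt{V(\varphi_0)}}\Phi'(\varphi_0)\cosh c_{\varphi_0}(\varphi)$, which after multiplication by $\sqrt{V(\varphi)}$ is the second. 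The only step requiring genuine care is the reparametrization: it works precisely because $V$ is monotone, so the induced first-order coefficient $\mu$ has a definite sign, and because the bootstrap makes $\tilde\Phi'$ non-negative, so that term is a non-negative source rather than an arbitrary perturbation; the mild degeneracy of the substitution at a possible zero of $V$ at $\varphi_1$ is then absorbed into the continuity remark above.
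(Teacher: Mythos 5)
Your proof is correct, and it takes a genuinely different route from the paper's. The paper first reduces, by linearity, to the case where either $\Phi(\varphi_0)=0$ or $\Phi'(\varphi_0)=0$, then compares $\Phi$ directly with the candidate lower bound $h(\varphi)=\Phi(\varphi_0)\cosh c_{\varphi_0}(\varphi)+V(\varphi_0)^{-1/2}\Phi'(\varphi_0)\sinh c_{\varphi_0}(\varphi)$ by showing that the Wronskian-type quantity $\Phi'h-h'\Phi$ is non-decreasing; the key computation is $V-h''/h=\lambda\tan(\varphi)\,T(c_{\varphi_0}(\varphi))/(\sqrt{V}\cos^2\varphi)\ge 0$ with $T=\tanh$ or $\coth$ (which is why the case split on initial data is needed), and the derivative bound is read off from the monotonicity of $\Phi-h$. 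You instead perform the Liouville substitution $u=c_{\varphi_0}(\varphi)$, which converts $\Phi''=V\Phi$ into $\tilde\Phi''=\tilde\Phi+\mu\tilde\Phi'$ with $\mu=-V'/(2V^{3/2})\ge 0$, and then run a clean first-order differential inequality (the $a=w'-w$ trick) against the constant-coefficient solution $y$. Both arguments rest on exactly the same structural fact, $V'\le 0$ (the computation underlying Lemma \ref{oni3}), but yours dispenses with the case split, treats mixed non-negative initial data in one pass, and yields the derivative inequality directly as $w'\ge 0$; the price is the preliminary sign bootstrap for $\Phi,\Phi'$ and the care needed at a possible zero of $V$ at $\varphi_1$, both of which you handle adequately (in the degenerate subcase $\Phi(\varphi^\ast)=0$ of the bootstrap one should note that monotonicity forces $\Phi\equiv 0$, so the lemma is trivial there — a one-line addition). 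The paper's version, by contrast, needs no reparametrization and stays entirely in the original variable, which is why it is phrased around the explicit comparison function $h$.
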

\begin{proof}
 Since every solution can be obtained as a sum of one with $\Phi(\varphi_0) = 0$ and one with $\Phi'(\varphi_0) = 0$
 we can assume without loss of generality that either $\Phi(\varphi_0) = 0$ or $\Phi'(\varphi_0) = 0$. We also
 assume that $\Phi(\varphi)$ is non-zero.
 Define 
 $$
  h(\varphi):=\Phi(\varphi_0) \cosh \left (c_{\varphi_0}(\varphi) \right) + \frac{1}{\sqrt{V(\varphi_0)}} \Phi'(\varphi_0) \sinh\left( c_{\varphi_0}(\varphi) \right).
 $$
 Then $h(\varphi)$ and $\Phi(\varphi)$ have the same initial data at $\varphi_0$. It follows from the differential
 equation and the assumption $\Phi(\varphi_0) \geq 0$ and $\Phi'(\varphi_0) \geq 0$ that 
 $\Phi(\varphi), \Phi'(\varphi)$ and $\Phi''(\varphi)$ are positive on some interval $(\varphi_0,\delta)$. 
 Moreover, where $\Phi(\varphi)>0$ and $\Phi'(\varphi)>0$ the function $\Phi(\varphi)$
 is convex and therefore,  $\Phi(\varphi), \Phi'(\varphi)$ and $\Phi''(\varphi)$ are non-decreasing functions
 on $[\varphi_0,\varphi_1]$.
 In particular $\Phi(\varphi) > 0$ on $(\varphi_0,\varphi_1]$. The differential equation  implies
 $$
  (\Phi'(\varphi) h(\varphi) - h'(\varphi) \Phi(\varphi))' = (V(\varphi)- \frac{h''(\varphi)}{h(\varphi)}) \Phi(\varphi) h(\varphi).
 $$
 The right hand side of this is non-negative as on can easily see by direct calculation that
 $$
  (V(\varphi)- \frac{h''(\varphi)}{h(\varphi)}) = \lambda \frac{\tan(\varphi)}{\sqrt{V(\varphi)}\cos^2(\varphi)} T((c_{\varphi_0}(\varphi))\geq 0,
 $$
 where $T(\varphi)$ is either $\tanh(\varphi)$ or $\coth(\varphi)$ depending on the inital conditions.
 Thus,
 $$
  (\ln{\frac{\Phi(\varphi)}{h(\varphi)}})'=(\Phi'(\varphi) h(\varphi) - h'(\varphi) \Phi(\varphi)) \geq 0
 $$
 on $(\varphi_0,\varphi_1]$ and consequently
 $\Phi(\varphi) \geq h(\varphi)$. The equation also implies that $\Phi(\varphi)-h(\varphi)$ is non-decreasing which gives the inequality for the derivatives.
\end{proof}

The following lemma is a direct consequence of Gronwall's inequality applied to the first order system
$$
 \frac{d}{d\varphi} \left( \begin{matrix} \Phi(\varphi) \\ \frac{1}{c}\Phi'(\varphi) \end{matrix} \right) = 
 \left( \begin{matrix} 0 & c \\ V(\varphi)/c & 0 \end{matrix} \right) 
 \left( \begin{matrix} \Phi(\varphi) \\ \frac{1}{c}\Phi'(\varphi) \end{matrix} \right).
$$

\begin{lemma} \label{oni2}
 Suppose that $\lambda>0$, $c>0$ and $|V(\varphi)| \leq c^2$ on some interval $[\varphi_0,\varphi_1] \subset [0,\pi/2)$
 Then for all $\varphi \in [\varphi_0,\varphi_1]$ we have
 $$
  (|\Phi(\varphi)|^2 + \frac{1}{c^2}  |\Phi'(\varphi)|^2)^{\frac{1}{2}}  \leq 
  \mathrm{e}^{c |\varphi-\varphi_0|}(|\Phi(\varphi_0)|^2 + \frac{1}{c^2}  |\Phi'(\varphi_0)|^2)^{\frac{1}{2}}.
 $$
\end{lemma}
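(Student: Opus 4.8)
The plan is to follow the hint preceding the statement and reduce everything to the differential form of Gronwall's inequality for the Euclidean norm of the vector $u(\varphi)=\bigl(\Phi(\varphi),\tfrac1c\Phi'(\varphi)\bigr)^{\mathrm T}$. First I would record that, since $\Phi$ solves the linear second order equation with potential $V$ written at the start of the appendix (equivalently $\Phi''(\varphi)=V(\varphi)\Phi(\varphi)$ in the $\varphi$-coordinate), the pair $u$ satisfies the first order linear system $u'(\varphi)=A(\varphi)u(\varphi)$ with the off-diagonal coefficient matrix $A(\varphi)$ displayed just before the lemma. This is a purely mechanical rewriting.

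Next I would estimate the growth of $\|u(\varphi)\|^2$, where $\|\cdot\|$ is the Euclidean norm on $\mathbb{C}^2$ (or $\mathbb{R}^2$ if one prefers real data). Differentiating, $\frac{d}{d\varphi}\|u\|^2=2\,\mathrm{Re}\langle u,A(\varphi)u\rangle\le 2\,\|A(\varphi)\|_{\mathrm{op}}\,\|u\|^2$. The one elementary point is that an off-diagonal $2\times 2$ matrix with entries $\alpha$ and $\beta$ has spectral norm $\max(|\alpha|,|\beta|)$, so here $\|A(\varphi)\|_{\mathrm{op}}=\max\bigl(c,|V(\varphi)|/c\bigr)\le c$ by the hypothesis $|V(\varphi)|\le c^2$ on $[\varphi_0,\varphi_1]$. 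Hence $\frac{d}{d\varphi}\|u\|^2\le 2c\|u\|^2$ on that interval.

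Finally, Gronwall's inequality (equivalently, integrating $\frac{d}{d\varphi}\log\|u\|^2\le 2c$) gives $\|u(\varphi)\|^2\le e^{2c(\varphi-\varphi_0)}\|u(\varphi_0)\|^2$ for $\varphi\in[\varphi_0,\varphi_1]$; taking square roots and unwinding the definition of $u$ is exactly the claimed bound, with $|\varphi-\varphi_0|=\varphi-\varphi_0$ on this interval (integrating backwards would handle $\varphi$ to the left of $\varphi_0$ and produce the absolute value). There is no genuine obstacle here; the only lines needing a word of care are the spectral-norm computation for $A(\varphi)$ and the logarithmic-derivative step when $\|u\|$ vanishes, which one disposes of either by applying Gronwall to $\|u\|^2+\delta$ and letting $\delta\to 0$, or by noting that a nontrivial solution of a linear second order ODE never vanishes simultaneously with its derivative.
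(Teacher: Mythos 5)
Your proof is correct and is exactly the argument the paper intends: the paper gives no written proof beyond declaring the lemma ``a direct consequence of Gronwall's inequality applied to the first order system'' it displays, and your computation of the off-diagonal matrix's operator norm as $\max(c,|V(\varphi)|/c)\le c$ followed by Gronwall supplies precisely the missing details.
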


For intervals $(\varphi_0,\varphi_1)$ where $V(\varphi)$ is positive this can be slightly refined. 
\begin{lemma} \label{oni1}
 Suppose that $\lambda>0$ and $V(\varphi) >0$ on some interval $(\varphi_0,\varphi_1) \subset [0,\pi/2)$
 Then for all $\varphi \in (\varphi_0,\varphi_1)$ we have
 $$
  (|\Phi(\varphi)|^2 + \frac{1}{V(\varphi)}  |\Phi'(\varphi)|^2)^{\frac{1}{2}}  \leq 
  \frac{V(\varphi_0)}{V(\varphi)} \mathrm{e}^{c_{\varphi_0}(\varphi)}(|\Phi(\varphi_0)|^2 + \frac{1}{V(\varphi_0)}  |\Phi'(\varphi_0)|^2)^{\frac{1}{2}}.
 $$
\end{lemma}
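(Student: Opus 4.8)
The plan is to establish a first-order differential inequality for the quantity
$$
  R(\varphi)^2 := |\Phi(\varphi)|^2 + \frac{1}{V(\varphi)}\,|\Phi'(\varphi)|^2
$$
(here $\Phi$ solves $-\Phi''+V\Phi=0$ with $'=\frac{d}{d\varphi}$, as in the rest of this section) and then integrate it. We may assume $\Phi\not\equiv 0$, since otherwise both sides of the asserted inequality vanish; then uniqueness for the equation forces $R(\varphi)>0$ for every $\varphi\in(\varphi_0,\varphi_1)$. We will also use throughout that $V'(\varphi)=-\frac{2\lambda\sin\varphi}{\cos^3\varphi}\le 0$ on $[0,\pi/2)$ --- the very computation appearing in the proof of Lemma \ref{oni3} --- so that $V$ is non-increasing on $(\varphi_0,\varphi_1)$ and hence $0<V(\varphi)\le V(\varphi_0)$ there; this makes all the quotients below meaningful.

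First I would differentiate $R^2$. Writing $u:=\Re\bigl(\Phi'\,\overline{\Phi}\bigr)$ and using $\Phi''=V\Phi$, a short computation gives
$$
  \frac{d}{d\varphi}R(\varphi)^2 = 4u - \frac{V'(\varphi)}{V(\varphi)^2}\,|\Phi'(\varphi)|^2 .
$$
The second term is non-negative, so it cannot simply be discarded, and handling it is the only delicate point of the argument. The trick is to bound it so that it reproduces the logarithmic derivative of $V$: since $|\Phi'|^2 = V\,(R^2-|\Phi|^2)\le V R^2$ and $V'\le 0$, we get $-\frac{V'}{V^2}\,|\Phi'|^2\le -\frac{V'}{V}\,R^2$. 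For the first term, the elementary inequality $2|\Phi|\,|\Phi'|\le\sqrt{V}\,|\Phi|^2+V^{-1/2}\,|\Phi'|^2$ yields $4|u|\le 4|\Phi|\,|\Phi'|\le 2\sqrt{V}\,R^2$. Adding the two estimates gives the differential inequality
$$
  \frac{d}{d\varphi}\log\bigl(R(\varphi)^2\bigr)\le 2\sqrt{V(\varphi)} - \frac{V'(\varphi)}{V(\varphi)} .
$$

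Finally I would integrate from $\varphi_0$ to $\varphi$, using $\int_{\varphi_0}^{\varphi}\sqrt{V(t)}\,dt=c_{\varphi_0}(\varphi)$ together with $\int_{\varphi_0}^{\varphi}\bigl(-V'(t)/V(t)\bigr)\,dt=\log\bigl(V(\varphi_0)/V(\varphi)\bigr)$; this gives
$$
  R(\varphi)^2\le R(\varphi_0)^2\,\mathrm{e}^{2 c_{\varphi_0}(\varphi)}\,\frac{V(\varphi_0)}{V(\varphi)},
$$
so that $R(\varphi)\le R(\varphi_0)\,\mathrm{e}^{c_{\varphi_0}(\varphi)}\sqrt{V(\varphi_0)/V(\varphi)}$. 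Since $V(\varphi_0)/V(\varphi)\ge 1$ we have $\sqrt{V(\varphi_0)/V(\varphi)}\le V(\varphi_0)/V(\varphi)$, which gives exactly the asserted inequality (in fact with the slightly sharper factor $\sqrt{V(\varphi_0)/V(\varphi)}$). Everything apart from the step involving the wrong-sign term $-\frac{V'}{V^2}|\Phi'|^2$ is routine, and that step is precisely where the monotonicity $V'\le 0$ of this particular potential enters in an essential way.
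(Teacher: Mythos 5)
Your proof is correct and is essentially the paper's argument: the quantity $R(\varphi)^2=|\Phi(\varphi)|^2+V(\varphi)^{-1}|\Phi'(\varphi)|^2$ is exactly the squared Euclidean norm of the vector $\bigl(\Phi,\,V^{-1/2}\Phi'\bigr)$ to which the paper applies Gronwall's inequality for the associated first-order system, and your differential inequality $\frac{d}{d\varphi}\log R^2\le 2\sqrt{V}-V'/V$ is the same estimate obtained by bounding the coefficient matrix, using $V'\le 0$ in the same essential way. The only difference is that your term-by-term bound (keeping the diagonal term acting only on the $\Phi'$-component) is slightly sharper than the paper's operator-norm bound, which is why you end up with the improved factor $\sqrt{V(\varphi_0)/V(\varphi)}$ in place of $V(\varphi_0)/V(\varphi)$.
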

\begin{proof}
 The equation is equivalent to the system
$$
 \frac{d}{d\varphi} \left( \begin{matrix} \Phi(\varphi) \\ \frac{1}{\sqrt{V(\varphi)}}\Phi'(\varphi) \end{matrix} \right) = 
 \left( \begin{matrix} 0 &  \sqrt{V(\varphi)}\\ \sqrt{V(\varphi)} & -\frac{V'(\varphi)}{2V(\varphi)} \end{matrix} \right) 
 \left( \begin{matrix} \Phi(\varphi) \\ \frac{1}{\sqrt{V(\varphi)}}\Phi'(\varphi) \end{matrix} \right).
$$
The operator norm of the family of matrices in this system at the point $\varphi$ is given by
$$
 -\frac{V'(\varphi)}{2V(\varphi)} + \sqrt{(\frac{V'(\varphi)}{2V(\varphi)})^2+V(\varphi)}
$$
which is bounded from below by
$$
 |\frac{V'(\varphi)}{V(\varphi)}| + \sqrt{V(\varphi)}
$$
given that $V'(\varphi) \leq 0$.
The lemma now follows immediately from Gronwall's inequality.
\end{proof}

\section{Estimates for the resolvent on the upper half space} \label{constest}

Let $\Delta_\mathbb{H}$ be the Laplace operator on the upper half space equipped with the Poincare
metric. Then the integral kernel $k_s(z,w)$ of the resolvent
$$
 R_{s(1-s)}(\Delta_\mathbb{H})=(\Delta_\mathbb{H} -s(1-s))^{-1}
$$
is given by
$$
 k_s(z,w) = Q_{-s}(\cosh\mathrm{dist}(z,w)).
$$
Here $Q_s$ is the Legendre Q-function. 
Using the point pair invariant $u(z,w)=\frac{\cosh{\mathrm{dist}(z,w)}-1}{2}=\frac{|z-w|^2}{4 \Im(z) \Im(w)}$
this can be expressed as 
$$
 k_s(z,w)=\frac{1}{2 \pi}Q_{-s}(1+2u(z,w))=F_s(u(z,w)),
$$
where $F_s(u)$ is given by an integral (see e.g. \cite{MR1942691})
$$
 F_s(u):=\frac{1}{2 \pi} Q_{-s}(\cosh{\rho}) = \frac{1}{4 \pi} \int_0^1 (\xi(1-\xi))^{s-1} (\xi+u)^{-s} d\xi.
$$ 
This formula can be used to construct a meromorphic
continuation of the resolvent as a function of $s$ to the whole complex plane.
We will need the following estimates on this function all of which follow right from the above integral
formula.
\begin{lemma} \label{lem1}
 Suppose that $\sigma=\mathrm{Re}(s)>0$. Then 
 \begin{gather*}
  |F_{s}(u)| \leq \frac{1}{4 \pi} \frac{(\mathrm{\Gamma}(\sigma))^2}{\mathrm{\Gamma}(2\sigma)} u^{-\sigma},\\
  |\frac{d}{du}F_{s}(u)| \leq \frac{|s|}{4 \pi} \frac{(\mathrm{\Gamma}(\sigma))^2}{\mathrm{\Gamma}(2\sigma)} u^{-\sigma-1}.
 \end{gather*}
\end{lemma}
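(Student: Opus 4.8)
The plan is to estimate the integral representation
$$
F_s(u) = \frac{1}{4 \pi}\int_0^1 (\xi(1-\xi))^{s-1}(\xi+u)^{-s}\,d\xi
$$
directly, using the triangle inequality for integrals. Since $\xi(1-\xi)>0$ on $(0,1)$ one has $|(\xi(1-\xi))^{s-1}| = (\xi(1-\xi))^{\sigma-1}$, and since $u>0$ and $\xi\ge 0$ the base $\xi+u$ is positive, so $|(\xi+u)^{-s}| = (\xi+u)^{-\sigma}$. Because $\sigma>0$, the inequality $\xi+u\ge u$ gives $(\xi+u)^{-\sigma}\le u^{-\sigma}$. Hence
$$
|F_s(u)| \le \frac{u^{-\sigma}}{4 \pi}\int_0^1 (\xi(1-\xi))^{\sigma-1}\,d\xi .
$$
The remaining integral is the Euler Beta integral $B(\sigma,\sigma)=\Gamma(\sigma)^2/\Gamma(2\sigma)$, which converges precisely because $\sigma>0$; this yields the first bound.

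For the derivative I would differentiate under the integral sign, obtaining
$$
\frac{d}{du}F_s(u) = -\frac{s}{4 \pi}\int_0^1 (\xi(1-\xi))^{s-1}(\xi+u)^{-s-1}\,d\xi .
$$
This is justified because, for $u$ ranging over any compact subset of $(0,\infty)$, the integrand on the right is dominated in absolute value by $C\,(\xi(1-\xi))^{\sigma-1}$ with a constant $C$ depending only on that compact set, and this majorant is integrable on $(0,1)$ for $\sigma>0$; the standard theorem on differentiation of parameter integrals then applies. Taking absolute values exactly as before, using $|{-s}|=|s|$ and $(\xi+u)^{-\sigma-1}\le u^{-\sigma-1}$, and evaluating the same Beta integral $B(\sigma,\sigma)=\Gamma(\sigma)^2/\Gamma(2\sigma)$, one obtains the second estimate.

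There is no genuine obstacle here; the computation is essentially immediate from the integral formula. The only two points deserving a word of care are the absolute convergence of the integral at the endpoints $\xi=0$ and $\xi=1$ (where the integrand behaves like $\xi^{\sigma-1}$ and $(1-\xi)^{\sigma-1}$ respectively, hence integrable exactly when $\sigma>0$), and the legitimacy of differentiating under the integral sign, both of which reduce to the finiteness of $B(\sigma,\sigma)$ for $\sigma>0$.
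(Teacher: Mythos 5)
Your proof is correct and is exactly the argument the paper intends: the authors give no written proof, remarking only that the estimates "follow right from the above integral formula," and your computation (triangle inequality, $(\xi+u)^{-\sigma}\le u^{-\sigma}$, the Beta integral $B(\sigma,\sigma)=\Gamma(\sigma)^2/\Gamma(2\sigma)$, and differentiation under the integral sign for the second bound) is the standard way to carry that out. The extra care you take with convergence at the endpoints and with justifying the differentiation is appropriate and introduces no gap.
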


\begin{lemma} \label{lem2}
 Suppose that $\sigma=\mathrm{Re}(s)>1$. Then 
 \begin{gather*}
  |F_{s}(u)| \leq \frac{1}{4 \pi} \log{\frac{1+u}{u}},\\
   |\frac{d}{du}F_{s}(u)| \leq \frac{|s|}{4 \pi} \frac{1}{u(1+u)}.
 \end{gather*}
\end{lemma}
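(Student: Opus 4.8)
The plan is to argue directly from the integral representation
$$
F_s(u)=\frac{1}{4\pi}\int_0^1 (\xi(1-\xi))^{s-1}(\xi+u)^{-s}\,d\xi
$$
established above, comparing the integrand with elementary kernels whose integrals over $[0,1]$ are precisely the claimed right-hand sides. The two identities that drive everything are
$$
\int_0^1 \frac{d\xi}{\xi+u}=\log\frac{1+u}{u},\qquad \int_0^1\frac{d\xi}{(\xi+u)^2}=\frac{1}{u(1+u)},
$$
both valid for $u>0$, together with the elementary monotonicity of $x\mapsto x^{\sigma-1}$ for $\sigma=\mathrm{Re}(s)>1$.

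For the first estimate I would bound the integrand in absolute value by $(\xi(1-\xi))^{\sigma-1}(\xi+u)^{-\sigma}$, and then observe that on $[0,1]$ one has $0\le \xi(1-\xi)\le \xi\le \xi+u$, so raising to the positive power $\sigma-1$ gives $(\xi(1-\xi))^{\sigma-1}\le(\xi+u)^{\sigma-1}$. Hence the integrand is bounded by $(\xi+u)^{-1}$, and integrating over $[0,1]$ yields $|F_s(u)|\le\frac{1}{4\pi}\log\frac{1+u}{u}$.

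For the derivative, I would first justify differentiation under the integral sign: for fixed $u>0$ the map $u\mapsto(\xi+u)^{-s}$ is smooth with $\partial_u(\xi+u)^{-s}=-s(\xi+u)^{-s-1}$, and since $\sigma>1$ the resulting integrand $(\xi(1-\xi))^{\sigma-1}(\xi+u)^{-\sigma-1}$ is dominated, locally uniformly in $u$, by a $\xi$-integrable function (the factor $(\xi(1-\xi))^{\sigma-1}$ is bounded since $\sigma-1>0$, and $(\xi+u)^{-\sigma-1}$ is bounded for $\xi\in[0,1]$ as long as $u$ stays in a compact subset of $(0,\infty)$). Dominated convergence then gives
$$
\frac{d}{du}F_s(u)=-\frac{s}{4\pi}\int_0^1(\xi(1-\xi))^{s-1}(\xi+u)^{-s-1}\,d\xi ,
$$
and applying the same pointwise inequality $(\xi(1-\xi))^{\sigma-1}\le(\xi+u)^{\sigma-1}$ bounds the integrand by $(\xi+u)^{-2}$; the second identity above then yields $\bigl|\frac{d}{du}F_s(u)\bigr|\le\frac{|s|}{4\pi}\frac{1}{u(1+u)}$.

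There is no real obstacle here. The only step deserving a word of care is the interchange of differentiation and integration, which is immediate from the hypothesis $\sigma>1$; the rest reduces to the monotonicity $x\mapsto x^{\sigma-1}$ and the two closed-form integrals, exactly in the spirit of the remark preceding Lemma~\ref{lem1} that these bounds "follow right from the above integral formula''.
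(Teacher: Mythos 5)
Your argument is correct and is exactly the computation the paper has in mind: the paper gives no written proof, stating only that Lemmas \ref{lem1}--\ref{lem3} ``follow right from the above integral formula,'' and your use of $(\xi(1-\xi))^{\sigma-1}\le(\xi+u)^{\sigma-1}$ for $\sigma>1$ together with the closed-form integrals of $(\xi+u)^{-1}$ and $(\xi+u)^{-2}$ is the intended derivation. The justification of differentiation under the integral sign is a welcome extra detail; nothing is missing.
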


A similar estimate as in the above lemma also holds for $\mathrm{Re}(s)>0$
(c.f. \cite{MR1942691}, Lemma 1.7, where however (1.48) seems to be inaccurate)

\begin{lemma} \label{lem3}
 Suppose that $\sigma=\mathrm{Re}(s)>0$. Then, there are constants $C_1(\sigma),C_3(\sigma)$, 
 depending only on $\sigma$, and a constant $C_2(s)$, depending only on $s$, such that
 \begin{gather*}
  | F_{s}(u) |  \leq \frac{1}{4 \pi} \log{\frac{1+u}{u}}  +  C_1(\sigma),\\
  | \frac{d}{du}F_{s}(u) | \leq \frac{C_2(s)}{4 \pi} \frac{1}{u} + C_3(\sigma).
 \end{gather*}
 Moreover, $C_2(s)$ can be bounded by $C_4(\sigma) |s|$, where $C_4(\sigma)$ depends
 only on $\sigma$.
\end{lemma}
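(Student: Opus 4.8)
\emph{Proof proposal.} The plan is to estimate the defining integral $F_s(u)=\frac{1}{4\pi}\int_0^1(\xi(1-\xi))^{s-1}(\xi+u)^{-s}\,d\xi$ directly, treating the ranges $\sigma\ge1$ and $0<\sigma<1$ separately. Writing the integrand as $(\xi+u)^{-1}\bigl(\tfrac{\xi(1-\xi)}{\xi+u}\bigr)^{s-1}$ and noting $0<\tfrac{\xi(1-\xi)}{\xi+u}\le1$, the case $\sigma\ge1$ is immediate, since the last factor has modulus $\bigl(\tfrac{\xi(1-\xi)}{\xi+u}\bigr)^{\sigma-1}\le1$; hence $|F_s(u)|\le\frac{1}{4\pi}\int_0^1(\xi+u)^{-1}d\xi=\frac{1}{4\pi}\log\frac{1+u}{u}$, i.e. the first inequality with $C_1(\sigma)=0$ (for $\sigma>1$ one may instead quote Lemma~\ref{lem2}). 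Differentiating under the integral sign (legitimate because the integral converges locally uniformly in $u>0$) gives $\frac{d}{du}F_s(u)=-\frac{s}{4\pi}\int_0^1(\xi(1-\xi))^{s-1}(\xi+u)^{-s-1}d\xi$, so that in all cases $\bigl|\frac{d}{du}F_s(u)\bigr|\le\frac{|s|}{4\pi}\int_0^1(\xi(1-\xi))^{\sigma-1}(\xi+u)^{-\sigma-1}d\xi$; this already displays the factor $|s|$, so it suffices to bound the last integral by $C_4(\sigma)/u$, whence $C_2(s)=|s|C_4(\sigma)$ and one may take $C_3(\sigma)=0$.

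For the value estimate in the remaining range $0<\sigma<1$ I would use the splitting $(\xi(1-\xi))^{\sigma-1}=\xi^{\sigma-1}+\xi^{\sigma-1}\bigl((1-\xi)^{\sigma-1}-1\bigr)$. In the first term substitute $\xi=ut$ to obtain $\int_0^{1/u}(1+t)^{-1}(1+1/t)^{1-\sigma}\,dt=\log\frac{1+u}{u}+\int_0^{1/u}(1+t)^{-1}\bigl((1+1/t)^{1-\sigma}-1\bigr)\,dt$, where the last integrand is nonnegative, behaves like $t^{\sigma-1}$ as $t\to0$ and like $(1-\sigma)t^{-2}$ as $t\to\infty$, hence is integrable on $(0,\infty)$ and contributes a constant depending only on $\sigma$. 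In the second term bound $(\xi+u)^{-\sigma}\le\xi^{-\sigma}$ to reduce to $\int_0^1\xi^{-1}\bigl|(1-\xi)^{\sigma-1}-1\bigr|\,d\xi$, which converges since $(1-\xi)^{\sigma-1}-1=O(\xi)$ near $\xi=0$ and $\sigma>0$ controls the endpoint $\xi=1$. Adding the two contributions yields the first inequality with an explicit $C_1(\sigma)$.

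For the derivative estimate (again $0<\sigma<1$, the case $\sigma\ge1$ being covered by Lemma~\ref{lem2}) I would split $[0,1]=[0,\tfrac12]\cup[\tfrac12,1]$. On $[\tfrac12,1]$ both $\xi^{\sigma-1}$ and $(\xi+u)^{-\sigma-1}$ are bounded by $\sigma$-dependent constants and $\int_{1/2}^1(1-\xi)^{\sigma-1}d\xi<\infty$, so this part is $O_\sigma(1)$. On $[0,\tfrac12]$ use $(1-\xi)^{\sigma-1}\le M_\sigma:=\max(1,2^{1-\sigma})$ and substitute $\xi=ut$ to get $M_\sigma u^{-1}\int_0^{1/(2u)}t^{\sigma-1}(1+t)^{-\sigma-1}dt\le M_\sigma u^{-1}B(\sigma,1)=\frac{M_\sigma}{\sigma u}$, using the Beta integral $\int_0^\infty t^{\sigma-1}(1+t)^{-\sigma-1}dt=B(\sigma,1)=1/\sigma$. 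Finally, to prevent the bounded $O_\sigma(1)$ term from needing an $|s|$‑dependent constant, I would distinguish: when $u\le1$ that term is itself $\le(\mathrm{const}_\sigma)/u$; when $u>1$ I bound the whole integral at once by $u^{-\sigma-1}\int_0^1(\xi(1-\xi))^{\sigma-1}d\xi=u^{-\sigma-1}B(\sigma,\sigma)\le u^{-1}B(\sigma,\sigma)$. In either case $\int_0^1(\xi(1-\xi))^{\sigma-1}(\xi+u)^{-\sigma-1}d\xi\le C_4(\sigma)/u$, giving the second inequality.

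The only genuinely delicate point is the \emph{coefficient} $\frac{1}{4\pi}$ in front of $\log\frac{1+u}{u}$: the naive bound $(1-\xi)^{\sigma-1}\le2^{1-\sigma}$ on $[0,\tfrac12]$ would only yield $\frac{2^{1-\sigma}}{4\pi}\log\frac{1+u}{u}$, whose excess over the claimed bound is $\bigl(2^{1-\sigma}-1\bigr)\frac{1}{4\pi}\log\frac1u\to\infty$ as $u\to0$. The decomposition $(1-\xi)^{\sigma-1}=1+\bigl((1-\xi)^{\sigma-1}-1\bigr)$ is exactly what isolates the unit coefficient, because the correction $(1-\xi)^{\sigma-1}-1$ vanishes to first order at $\xi=0$ and is therefore integrated against $\xi^{-1}$ with no logarithmic loss; this is presumably also the source of the inaccuracy in (1.48) of \cite{MR1942691}.
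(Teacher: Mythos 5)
Your proposal is correct, and it does more than the paper does: the paper states Lemma~\ref{lem3} without proof, deferring to Lemma~1.7 of \cite{MR1942691} (while noting that (1.48) there seems inaccurate), whereas you give a complete direct estimation of the integral representation. All the individual steps check out: the modulus of the integrand reduces everything to the real exponent $\sigma$; the substitution $\xi=ut$ correctly produces $\int_0^{1/u}t^{\sigma-1}(1+t)^{-\sigma}\,dt$ and its exact logarithmic part $\log\frac{1+u}{u}$; the two correction integrals converge for $\sigma>0$ by the stated endpoint asymptotics; and the derivative bound via $B(\sigma,1)=1/\sigma$ on $[0,\tfrac12]$ together with the $u\le 1$ versus $u>1$ dichotomy yields $C_2(s)=C_4(\sigma)|s|$ with $C_3(\sigma)=0$, which is a sharpening of the stated form. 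You have also correctly isolated the one genuinely delicate point, namely that the coefficient of $\log\frac{1+u}{u}$ must be exactly $\frac{1}{4\pi}$, and the decomposition $(1-\xi)^{\sigma-1}=1+\bigl((1-\xi)^{\sigma-1}-1\bigr)$, whose correction term vanishes to first order at $\xi=0$, is precisely what achieves this without a multiplicative loss in front of the logarithm.
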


In the important case $\sigma=1/2$ one can choose $C_1(1/2)=1$, $C_2(1/2)=2$ and $C_3(1/2)=1$,
although these choices are not optimal. The statement that $C_2(s)$ can be bounded by
$C_4(\sigma) |s|$ is not optimal either for large values of 
$\Im(s)$, where cancellations determine the asymptotic behaviour as $u \to 0$. 

\begin{theorem} \label{app1}
Let $\Gamma$ be a geodesic segment in the upper half space and let 
 $(\rho,t)$ be Fermi coordinates with respect to the infinite extension of $\Gamma$.
 Suppose a distribution $h_1$ is defined as
 $$
  h_1(f) = \int F_1(t) f(0,t) dt,
 $$
 where $F_1 \in L^2(\mathbb{R})$. 
 Then 
 $$
 \|(\Delta_\mathbb{H}+3/2)^{-1} h_1 \|_{L^2(\mathbb{H})} \leq C_1 \| F_1 \|_{L^2(\mathbb{R})}
 $$
 where the constant $C_1$ is given by (\ref{const1}) and satisfies $C_1 \leq 1.75$.
\end{theorem}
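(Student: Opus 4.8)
The plan is to realise the map $T\colon L^2(\mathbb{R})\to L^2(\mathbb{H})$, $F_1\mapsto(\Delta_{\mathbb{H}}+3/2)^{-1}h_1$, as an operator whose norm is controlled by a single scalar integral. Write $3/2=-s(1-s)$ with $s=\tfrac{1+\sqrt{7}}{2}$, so that $\sigma:=\mathrm{Re}(s)>1$, and recall from the beginning of this appendix that $(\Delta_{\mathbb{H}}+3/2)^{-1}$ has the real, symmetric, strictly positive integral kernel $k_s(z,w)=F_s(u(z,w))$. Since $h_1=F_1\,\delta_\Gamma$ lies in $H^{-2}(\mathbb{H})$ (by the trace theorem and duality), we have $(\Delta_{\mathbb{H}}+3/2)^{-1}h_1\in L^2(\mathbb{H})$ and
\[
 (Tf)(z)=\int_{\mathbb{R}}k_s\bigl(z,(0,t')\bigr)\,F_1(t')\,dt',
\]
where $(0,t')$ denotes the point of the infinite extension of the geodesic with Fermi parameter $t'$. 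In Fermi coordinates $(\rho,t)$ the metric is $d\rho^2+\cosh^2\!\rho\,dt^2$, the Riemannian measure is $\cosh\rho\,d\rho\,dt$, and $\cosh\mathrm{dist}\bigl((\rho,t),(0,t')\bigr)=\cosh\rho\cosh(t-t')$, so $u\bigl((\rho,t),(0,t')\bigr)=\tfrac12\bigl(\cosh\rho\cosh(t-t')-1\bigr)$; in particular $T$ commutes with the isometric translations $t\mapsto t+a$.

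Next I would compute $\|T\|^2=\|T^{*}T\|$. By the previous formulas $T^{*}T$ is, on $L^2(\mathbb{R})$, convolution with $G(\tau)=k_s^{(2)}\bigl((0,\tau),(0,0)\bigr)$, where $k_s^{(2)}(z,w)=\int_{\mathbb{H}}k_s(z,\zeta)k_s(\zeta,w)\,d\mu(\zeta)$ is the kernel of $(\Delta_{\mathbb{H}}+3/2)^{-2}$; since $k_s\ge 0$ we have $G\ge 0$, hence by Young's inequality $\|T\|^2=\|T^{*}T\|\le\|G\|_{L^1(\mathbb{R})}=\int_{\mathbb{R}}G(\tau)\,d\tau$ (this is in fact an equality, but only the inequality is needed). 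Writing $J(\rho):=\int_{\mathbb{R}}F_s\bigl(\tfrac12(\cosh\rho\cosh t-1)\bigr)\,dt$, a Tonelli computation collapses the triple integral: $\int_{\mathbb{R}}k_s\bigl((0,\tau),(\rho,t)\bigr)\,d\tau=J(\rho)$ independently of $t$, so
\[
 \int_{\mathbb{R}}G(\tau)\,d\tau=\int_{\mathbb{H}}J(\rho)\,k_s\bigl((\rho,t),(0,0)\bigr)\,d\mu=\int_{-\infty}^{\infty}J(\rho)^2\cosh\rho\,d\rho,
\]
because $\int_{\mathbb{R}}k_s\bigl((\rho,t),(0,0)\bigr)\,dt=J(\rho)$ as well. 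Thus $C_1^2\le\int_{-\infty}^{\infty}J(\rho)^2\cosh\rho\,d\rho$.

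It remains to make this bound explicit and estimate it. Since $\sigma>1$, Lemma \ref{lem2} gives $F_s(u)\le\tfrac1{4\pi}\log\tfrac{1+u}{u}$, that is $F_s\bigl(\tfrac12(\cosh\rho\cosh t-1)\bigr)\le\tfrac1{4\pi}\log\tfrac{\cosh\rho\cosh t+1}{\cosh\rho\cosh t-1}$; integrating in $t$ bounds $J(\rho)$ by an elementary function $\tilde J(\rho)$, and $\bigl(\int_{-\infty}^{\infty}\tilde J(\rho)^2\cosh\rho\,d\rho\bigr)^{1/2}$ is the constant $C_1$ of (\ref{const1}). One first checks this integral is finite: the integrand of $\tilde J$ is integrable near $t=\rho=0$ in spite of the logarithm, and $\tilde J(\rho)=O(1/\cosh\rho)$ as $\rho\to\pm\infty$. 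Then a numerical evaluation of the resulting two-dimensional integral gives $C_1\le 1.75$.

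The structural steps above are routine once translation invariance along $\Gamma$ is exploited; the real work is the last step, a \emph{rigorous} numerical bound $\int_{-\infty}^{\infty}\tilde J(\rho)^2\cosh\rho\,d\rho\le(1.75)^2$. This requires honest uniform control of the inner integral $\tilde J(\rho)$ — in particular for $\cosh\rho$ close to $1$, where $\log\frac{\cosh\rho\cosh t+1}{\cosh\rho\cosh t-1}$ nearly blows up at $t=0$ — together with an explicit bound on the slowly decaying tail in $\rho$, say by splitting at some $\rho=R$ and estimating $\rho>R$ analytically. A minor technical point is to justify the kernel representation of $Tf$ and the Tonelli interchanges for the merely-$L^2$ density $F_1$, which is done by approximating $F_1\delta_\Gamma$ in $H^{-2}(\mathbb{H})$ by smooth compactly supported functions and using that every integrand appearing is nonnegative.
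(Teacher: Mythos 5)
Your structural reduction is correct and, despite the detour through $T^{*}T$, lands on exactly the paper's constant: the paper simply applies the generalized Young inequality in the $t$-variable for each fixed $\rho$ (the map $F_1\mapsto (TF_1)(\rho,\cdot)$ is a convolution on $\mathbb{R}$ with kernel $k_s(\rho,\cdot\,;0,0)$), which gives $\int|(TF_1)(\rho,t)|^2\,dt\le\|F_1\|_{L^2}^2\,J(\rho)^2$ and hence $C_1^2=\int J(\rho)^2\cosh\rho\,d\rho$, i.e.\ (\ref{const1}). Your identity $\|T\|^2=\|T^{*}T\|\le\|G\|_{L^1}=\int J(\rho)^2\cosh\rho\,d\rho$ is an equivalent (and slightly more illuminating, since it shows the Young bound is attained) packaging of the same estimate. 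Your choice $s=(1+\sqrt7)/2$ is in fact more consistent with the operator $(\Delta_\mathbb{H}+3/2)^{-1}$ than the paper's ``$s=3/2$'', which corresponds to $(\Delta_\mathbb{H}+3/4)^{-1}$; either way $\sigma>1$, so Lemmas \ref{lem1} and \ref{lem2} apply.

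The gap is the last step. The content of the theorem is the explicit inequality $C_1\le 1.75$, and your argument for it is ``a numerical evaluation of the resulting two-dimensional integral gives $C_1\le 1.75$,'' followed by a paragraph describing what a rigorous version would have to control. That is a plan, not a proof. The paper actually carries this out in closed form: it splits at $\cosh\rho=3$, uses the power-law bound of Lemma \ref{lem1} (giving $J(\rho)=O((\cosh\rho)^{-3/2})$, which after squaring is integrable against $\cosh\rho\,d\rho$) for $\cosh\rho>3$, and for $\cosh\rho\le 3$ splits the $t$-integral at $\cosh t=3$, using Lemma \ref{lem1} on the far part and the logarithmic bound of Lemma \ref{lem2} together with $\log\frac{\cosh t+1}{\cosh t-1}\le\log\frac{t^2+4}{t^2}$ on the near part; every resulting integral is elementary, and assembling them yields $C_1\le 1.7485\ldots$. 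Your plan of using only the Lemma \ref{lem2} bound everywhere is viable (the tail of $\tilde J$ is then only $O(1/\cosh\rho)$, still square-integrable against $\cosh\rho\,d\rho$, and a careful computation even produces a constant comfortably below $1.75$), but to complete the proof you must produce explicit elementary upper bounds for $\tilde J(\rho)$ on each region and integrate them, rather than appeal to an unspecified numerical quadrature.
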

\begin{proof}
 Since the Laplace operator commutes with the $SL(2,\mathbb{R})$-action
 we can assume without loss of generality that the geodesic segment is part
 of the imaginary axis.
 The function $(\Delta_\mathbb{H}+3/2)^{-1} h_1$ can be computed by convolving $h_1$
 with the integral kernel $k_s(z,w)$ for $s=3/2$. We use Fermi coordinates
 $(\rho,t)$ so that the integral kernel is given by $k_s(\rho,t;\rho',t')=k_s(\rho,t-t';\rho',0)$.
 Thus,
 $$
   ((\Delta_\mathbb{H}+3/2)^{-1} h_1 )(\rho,t)= \int F_1(t') k_s(\rho,t-t',0,0) dt'.
 $$
 Since this is a convolution the generalized Young inequality applies so that
 $$
  \int |((\Delta_\mathbb{H}+3/2)^{-1} h_1 )(\rho,t)|^2 dt \leq  \|F_1\|_{L^2}^2 \left( \int | k_s(\rho,t,0,0) | dt \right)^2.
 $$
 The value of the point pair invariant $u(z,w)$  at $w=\I$ in these coordinates is given by
 $$
  u(\rho,t,0,0)=\frac{1}{2}\left( \cosh{t} \cosh{\rho} -1\right),
 $$
 so that the inequality follows with
 \begin{gather}\label{const1}
  C_1= \left(\int (\int | F_s(\frac{1}{2}\left( \cosh{t} \cosh{\rho} -1\right)) | dt)^2 \cosh{\rho} d\rho \right)^{\frac{1}{2}}.
 \end{gather}
 The integral over $t$ can be estimated for $\cosh{\rho}>3$ as
 \begin{gather*}
  \int | F_s(\frac{1}{2}\left( \cosh{t} \cosh{\rho} -1\right)) | dt \leq \frac{1}{4 \pi} \frac{(\mathrm{\Gamma}(3/2))^2}{\mathrm{\Gamma}(3)}
  (2)^{3/2} (\cosh{\rho})^{-3/2} \int_0^\infty (\mathrm{e}^{|t|}/2 -1/3)^{-\frac{3}{2}} dt =\\
  =  \frac{3}{8} \sqrt{\frac{3}{2}} \left(4 \sqrt{2}-\pi -2 \arcsin{\sqrt{\frac{2}{3}}}+2 \arctan{\frac{1}{\sqrt{2}}}\right) (\cosh{\rho})^{-3/2} 
 \end{gather*}
 For $\cosh{\rho}<3$ we split the integral over $t$ into two parts. One with $\cosh{t}>3$ and one with
 $\cosh{t}<3$. The former gives
 \begin{gather*}
  \int_{\cosh{t}>3} | F_s(\frac{1}{2}\left( \cosh{t} \cosh{\rho} -1\right)) | dt \leq \\ \leq \frac{1}{4 \pi} \frac{(\mathrm{\Gamma}(3/2))^2}{\mathrm{\Gamma}(3)}
  2^{3/2}  \int_{\cosh{t}>3}  (\cosh{t} -1)^{-\frac{3}{2}} dt =\\=
  \frac{1}{16} \left(\sqrt{2}+\log\tanh \left(\frac{1}{4} \log (3+2\sqrt{2} )\right)\right)
 \end{gather*}
 and the latter
 \begin{gather*}
  \int_{\cosh{t}<3} | F_s(\frac{1}{2}\left( \cosh{t} \cosh{\rho} -1\right)) | dt \leq \frac{1}{4 \pi} \int_{\cosh{t}<3} \log{\frac{\cosh{t}+1}{\cosh{t}-1}} dt 
  \leq \\ \leq\frac{1}{2 \pi} \int_0^{\mathrm{arccosh}{3}} \log{\frac{t^2+4}{t^2}} dt =
  \frac{\arccosh(3) \log \left(1+\frac{4}{\arccosh(3)^2}\right)+4 \arctan \left(\frac{1}{2} \arccosh(3)\right)}{2 \pi }
 \end{gather*}
 Squaring and integrating finally gives
 \begin{gather*}
  C_1 \leq \frac{1}{8}\sqrt{ \left(3-2 \sqrt{2}\right)  \alpha_1^2 + \sqrt{2} (\alpha_2)^2} \approx 1.7485475
 \end{gather*}
 where
 \begin{gather*}
 \alpha_1= 3 \pi +6\; \arccot\left(2
   \sqrt{2} \right) -12 \sqrt{2},\\
   \alpha_2=\sqrt{2}+\log \left(\tanh \left(\frac{1}{4} \log \left(3+2
   \sqrt{2}\right)\right)\right)+\\+\frac{8 \left(\arccosh(3) \log
   \left(1+\frac{4}{\arccosh(3)^2}\right)+4 \arctan\left(\frac{1}{2} \arccosh(3)\right)\right)}{\pi }.
  \end{gather*}
\end{proof}

\begin{theorem} \label{app2}
 Let $\Gamma$ be a geodesic segment in the upper half space suppose
 $(\rho,t)$ are Fermi coordinates with respect to the infinite extension of $\Gamma$.
 Suppose a distribution $h_2$ is defined as
 $$
  h_2(f) = -\int_0^\ell F_2(t) (\frac{\partial}{\partial \rho}f)(0,t) dt.
 $$
 Then 
 $$
 \|(\Delta_\mathbb{H}+1)^{-1} h_2 \|_{L^2(\mathbb{H})}^2 \leq C_2 \| F_2 \|_{L^2(0,\ell)},
 $$
 where the constant $C_2$ is given by (\ref{const2}) and satisfies $C_2 \leq 1.61$
\end{theorem}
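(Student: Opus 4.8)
The plan is to follow the proof of Theorem~\ref{app1} essentially verbatim, the only new feature being that the normal derivative appearing in $h_2$ differentiates the resolvent kernel rather than merely restricting it to the geodesic.

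First I would use the $SL(2,\mathbb{R})$-invariance of $\Delta_\mathbb{H}$ to assume $\Gamma$ lies on the imaginary axis, and work in Fermi coordinates $(\rho,t)$ along its infinite geodesic extension, in which the area element is $\cosh\rho\,d\rho\,dt$. Writing $s(1-s)=-1$, so that $s=\tfrac{1+\sqrt5}{2}$ and in particular $\sigma:=\mathrm{Re}(s)=\tfrac{1+\sqrt5}{2}>1$, the resolvent kernel of $(\Delta_\mathbb{H}+1)^{-1}$ is $k_s(z,w)=F_s(u(z,w))$. Since the double-layer distribution $h_2$ lies in $H^{-3/2-\epsilon}$, the function $(\Delta_\mathbb{H}+1)^{-1}h_2$ is in $L^2$, and by duality it is given by
\[
  \bigl((\Delta_\mathbb{H}+1)^{-1}h_2\bigr)(\rho,t)=-\int_0^\ell F_2(t')\,\Bigl(\frac{\partial}{\partial\rho'}F_s(u)\Bigr)\Big|_{\rho'=0}\,dt',
\]
where $\rho'$ denotes the normal coordinate of the source point. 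The Fermi-coordinate distance law $\cosh\mathrm{dist}=\cosh\rho\cosh\rho'\cosh(t-t')-\sinh\rho\sinh\rho'$ gives $u|_{\rho'=0}=\tfrac12(\cosh\rho\cosh(t-t')-1)$ and $\partial_{\rho'}u|_{\rho'=0}=-\tfrac12\sinh\rho$, so that
\[
  \bigl((\Delta_\mathbb{H}+1)^{-1}h_2\bigr)(\rho,t)=\frac{\sinh\rho}{2}\int_{\mathbb{R}}F_2(t')\,F_s'\!\bigl(\tfrac12(\cosh\rho\cosh(t-t')-1)\bigr)\,dt',
\]
with $F_2$ extended by zero off $[0,\ell]$; for fixed $\rho$ this is a convolution in $t$.

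Next I would apply the generalized Young inequality in $t$ and then integrate in $\rho$, exactly as in Theorem~\ref{app1}, obtaining
\[
  \|(\Delta_\mathbb{H}+1)^{-1}h_2\|_{L^2(\mathbb{H})}^2\le\|F_2\|_{L^2(0,\ell)}^2\int_{\mathbb{R}}\frac{\sinh^2\rho}{4}\,\cosh\rho\,\Bigl(\int_{\mathbb{R}}\bigl|F_s'\bigl(\tfrac12(\cosh\rho\cosh\tau-1)\bigr)\bigr|\,d\tau\Bigr)^2\!d\rho,
\]
so that $C_2^2$ can be taken to be this $\rho$-integral. Because $\sigma>1$, Lemma~\ref{lem2} gives $|F_s'(u)|\le\tfrac{|s|}{4\pi}\,\tfrac1{u(1+u)}$, and since $u(1+u)=\tfrac14(\cosh^2\rho\cosh^2\tau-1)$ the inner integral becomes elementary: the substitution $v=\tanh\tau$ yields
\[
  \int_{\mathbb{R}}\frac{d\tau}{\cosh^2\rho\,\cosh^2\tau-1}=\frac{2}{|\sinh\rho|}\arctan\frac1{|\sinh\rho|}.
\]
Hence $\int_{\mathbb{R}}|F_s'|\,d\tau\le\tfrac{2|s|}{\pi|\sinh\rho|}\arctan\tfrac1{|\sinh\rho|}$, the weight $\sinh^2\rho$ cancels, and with $v=\sinh\rho$
\[
  \int_{\mathbb{R}}\frac{\sinh^2\rho}{4}\cosh\rho\Bigl(\int_{\mathbb{R}}|F_s'|\,d\tau\Bigr)^2d\rho\le\frac{2|s|^2}{\pi^2}\int_0^\infty\arctan^2(1/v)\,dv.
\]
An integration by parts together with the classical identity $\int_0^{\pi/2}x\cot x\,dx=\tfrac{\pi}{2}\log2$ shows $\int_0^\infty\arctan^2(1/v)\,dv=\pi\log2$, so I would take $C_2=\sqrt{\tfrac{2|s|^2\log2}{\pi}}=\sqrt{\tfrac{(3+\sqrt5)\log2}{\pi}}\approx1.075$, comfortably below $1.61$ (using $|s|^2=(3+\sqrt5)/2$); this gives the asserted estimate $\|(\Delta_\mathbb{H}+1)^{-1}h_2\|_{L^2(\mathbb{H})}\le C_2\|F_2\|_{L^2(0,\ell)}$.

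I do not expect a genuine obstacle, but two points need care. The first is the near-diagonal region $\rho,\tau\to0$: there $u\to0$, Lemma~\ref{lem2}'s bound behaves like $1/u$, and the $\tau$-integral produces a factor $1/|\sinh\rho|$; one must check — as it works out above — that the factor $\sinh^2\rho$ coming from $(\partial_{\rho'}u)^2$ exactly absorbs the resulting $1/\sinh^2\rho$, so that the $\rho$-integral still converges at the origin. The second is bookkeeping in the representation formula: getting the sign and the choice of variable in the duality argument right, and noticing that extending $F_2$ by zero off $[0,\ell]$ is precisely what allows Young's inequality to produce $\|F_2\|_{L^2(0,\ell)}$ on the right. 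If one prefers not to evaluate $\int_0^\infty\arctan^2(1/v)\,dv$ in closed form, the cruder split used for Theorem~\ref{app1} — Lemma~\ref{lem1} for small $u$ and Lemma~\ref{lem2} for large $u$ — leads to the same conclusion with a constant still below $1.61$.
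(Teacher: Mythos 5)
Your proposal is correct and follows the paper's proof almost step for step: the same reduction to the imaginary axis, the same representation $\frac{\sinh\rho}{2}F_s'\bigl(\tfrac12(\cosh\rho\cosh(t-t')-1)\bigr)$ for the normal derivative of the kernel, the same application of the generalized Young inequality in $t$, and the same resulting expression (\ref{const2}) for $C_2$. The only genuine divergence is in the final estimation of that double integral: the paper splits into the regions $\cosh\rho\gtrless 3$ and invokes both Lemma \ref{lem1} and Lemma \ref{lem2}, arriving at the bound $1.6086$, whereas you use Lemma \ref{lem2} alone on all of $\mathbb{H}$ (legitimate, since $\sigma>1$) and evaluate the resulting integrals in closed form via $\int_0^\infty\arctan^2(1/v)\,dv=\pi\log 2$, which yields the sharper constant $\approx 1.075$. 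You are also more careful than the paper about the spectral parameter: for $(\Delta_{\mathbb H}+1)^{-1}$ one indeed needs $s(1-s)=-1$, i.e.\ $s=\tfrac{1+\sqrt5}{2}$, while the paper's proof (of both Theorem \ref{app1} and \ref{app2}) works with $s=3/2$, which corresponds to $(\Delta_{\mathbb H}+\tfrac34)^{-1}$ rather than the operators named in the statements; your version resolves that internal inconsistency.
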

\begin{proof}
 As in the proof of the previous  theorem
 we can assume without loss of generality that the geodesic segment is part
 of the imaginary axis.
 As before we use Fermi coordinates
 $(\rho,t)$ so that the integral kernel is given by $k_s(\rho,t;\rho',t')=k_s(\rho,t-t';\rho',0)$.
 Thus,
 $$
   (\Delta_\mathbb{H}+3/2)^{-1} h_1 )(\rho,t)= \int F_1(t') \left(-\partial_{\rho'} k_s(\rho,t-t',\rho',0)\right)|_{\rho'=0} dt'.
 $$
 Now by the chain rule
 $$
  \left(-\partial_{\rho'} k_s(\rho,t-t',\rho',0)\right)|_{\rho'=0}  = (-\frac{\partial u}{\partial \rho} F'_s(u))(\rho,t-t',0,0)=
  \frac{ \sinh{\rho} }{2} F'_s(u(\rho,t-t',0,0)).
 $$
 Again, by the generalized Young inequality 
 $$
  \int |((\Delta_\mathbb{H}+3/2)^{-1} h_2 )(\rho,t)|^2 dt = \|F_1\|_{L^2}^2 (\frac{\sinh{\rho}}{2})^2 \left( \int | F'_s(u(\rho,t,0,0)) | dt \right)^2.
 $$
 Thus,
 $$
  \|(\Delta_\mathbb{H}+1)^{-1} h_2 \|_{L^2(\mathbb{H})}^2 \leq C_2 \| F_2 \|_{L^2(0,\ell)},
 $$
 where
 \begin{gather} \label{const2}
  C_2 = \left( \int \left( \int | F'_s(u(\rho,t,0,0)) | dt \right)^2 (\frac{\sinh{\rho}}{2})^2 \cosh{\rho} d\rho \right)^{\frac{1}{2}}
 \end{gather}
 Splitting the integral into a region $\cosh{\rho}>3$ and $\cosh{\rho}<3$ and using the estimates \ref{lem1}
 and \ref{lem2} one obtains
 \begin{gather*}
  C_2 \leq 
  \frac{\sqrt{3}}{16} \left(\left(27-16 \sqrt{2}\right) \left(5 \sqrt{2}-6 \pi +12
   \arctan\left(\sqrt{2}\right)\right)^2+48 \left(2 \sqrt{2}+\arccosh(3)\right)\right)^{1/2}. 
 \end{gather*}
 The numerical value of the right hand side of this inequality is about $1.6086$.
 \end{proof}

Both constants $C_1$ and $C_2$ can be computed using numerical integration and their values are about
$$
 C_1 \approx 0.313, \quad C_2 \approx 0.343,
$$
where we believe that at least two digits are correct. The corresponding estimates are non-rigorous however.

\section{Estimates on derivatives of the resolvent}

We will need the following bounds on the derivative of $k_{s}$ with respect to $s$.
\begin{lemma} \label{lemc1}
 Let $F_s(u)$ be the function defined in the previous section. Then, for any $\sigma>0$ we have the estimates
 \begin{gather*}
  |\frac{\partial}{\partial s} F_s(u)| \leq \tilde C_1(\sigma) \log(1+u) u^{-\sigma} + \tilde C_2(\sigma) u^{-\sigma},\\
  |\frac{\partial^2}{\partial u \partial s} F_s(u)| \leq  \tilde C_2(\sigma) u^{-\sigma-1} + |s| \left( \tilde C_1(\sigma) \log(1+u) +  \tilde C_2(\sigma) \right) u^{-\sigma-1},\\
  |\frac{\partial^3}{\partial u^2 \partial s} F_s(u)| \leq |2s+1| \tilde C_1(\sigma) u^{-\sigma-2} + 
  |s(s+1)| \left( \tilde C_1(\sigma) \log(1+u)+ \tilde C_2(\sigma) \right) u^{-\sigma-2},
 \end{gather*}
 where
 \begin{gather*}
  \tilde C_1(\sigma)= \frac{1}{4\pi} \int_0^1 (\xi(1-\xi))^{\sigma-1} d \xi,\\
  \tilde C_2(\sigma)= -\frac{1}{4\pi} \int_0^1 \log(\xi(1-\xi)) (\xi(1-\xi))^{\sigma-1} d \xi,
 \end{gather*}
 and $s=\sigma+\I r$.
 In particular $\tilde C_1(1/2)=\frac{1}{4}$ and $\tilde C_2(1/2)=\frac{\log(4)}{2}$.
\end{lemma}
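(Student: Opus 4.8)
The plan is to differentiate the integral representation
$$F_s(u)=\frac{1}{4\pi}\int_0^1 (\xi(1-\xi))^{s-1}(\xi+u)^{-s}\,d\xi$$
under the integral sign, first justifying this step: for fixed $u>0$ and $s$ ranging over a compact subset of $\{\Re s>0\}$ with $\Re s\ge\sigma_0>0$, every integrand produced after applying $\partial_s$, $\partial_u$, $\partial_u^2$ is, near the endpoints $\xi=0,1$, dominated by a constant multiple of $(\xi(1-\xi))^{\sigma_0-1}\bigl(1+|\log(\xi(1-\xi))|\bigr)$, which is integrable, and is smooth in the interior, so dominated convergence applies. Using $\partial_s(\xi+u)^{-s}=-(\xi+u)^{-s}\log(\xi+u)$ and $\partial_s(\xi(1-\xi))^{s-1}=(\xi(1-\xi))^{s-1}\log(\xi(1-\xi))$ one computes that $\partial_s F_s(u)$ equals $\frac{1}{4\pi}\int_0^1(\xi(1-\xi))^{s-1}(\xi+u)^{-s}\log\frac{\xi(1-\xi)}{\xi+u}\,d\xi$, and that $\partial_u\partial_s F_s(u)$ and $\partial_u^2\partial_s F_s(u)$ are each the sum of a logarithm-free integral (with $(\xi+u)^{-s-1}$, resp. $(\xi+u)^{-s-2}$, and with prefactor of modulus $1$, resp. $|2s+1|$, coming from differentiating the polynomials $-s$ and $s(s+1)$ in $s$) and an integral containing the factor $\log\frac{\xi(1-\xi)}{\xi+u}$ (same power of $\xi+u$, with prefactor $|s|$, resp. $|s(s+1)|$).

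The estimates then reduce to three elementary bounds inside the integrals. With $s=\sigma+\I r$ one has $|(\xi(1-\xi))^{s-1}|=(\xi(1-\xi))^{\sigma-1}$ and, since $\xi+u\ge u$ and $\sigma>0$, $|(\xi+u)^{-s-k}|=(\xi+u)^{-\sigma-k}\le u^{-\sigma-k}$ for $k=0,1,2$. The one point requiring care is the logarithmic factor, and the key observation is that $\xi+u-\xi(1-\xi)=u+\xi^2>0$, so the quotient $\xi(1-\xi)/(\xi+u)$ always lies in $(0,1]$; hence
$$\Bigl|\log\frac{\xi(1-\xi)}{\xi+u}\Bigr|=\log(\xi+u)-\log(\xi(1-\xi))\le\log(1+u)+\bigl|\log(\xi(1-\xi))\bigr|,$$
using $\xi+u\le1+u$ and $\xi(1-\xi)\le\tfrac14<1$. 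This is precisely what produces the clean factor $\log(1+u)$ while leaving no $|\log u|$ behind.

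Substituting these bounds and using $\int_0^1(\xi(1-\xi))^{\sigma-1}d\xi=4\pi\,\tilde C_1(\sigma)$ together with $\int_0^1(\xi(1-\xi))^{\sigma-1}|\log(\xi(1-\xi))|\,d\xi=4\pi\,\tilde C_2(\sigma)$ gives the three inequalities at once; the only cosmetic adjustment is in the second one, where the logarithm-free term naturally carries $\tilde C_1(\sigma)$, which I would replace by the larger $\tilde C_2(\sigma)$ using that $-\log(\xi(1-\xi))\ge\log4\ge1$ on $(0,1)$, so $\tilde C_1(\sigma)\le\tilde C_2(\sigma)$. For the values at $\sigma=\tfrac12$ I would evaluate $\tilde C_1(1/2)=B(\tfrac12,\tfrac12)/(4\pi)=\tfrac14$ and obtain $\tilde C_2(1/2)=\tfrac{\log4}{2}$ by differentiating $B(a,\tfrac12)$ in $a$ at $a=\tfrac12$ (equivalently via $\psi(\tfrac12)-\psi(1)=-2\log2$). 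I do not foresee any genuine obstacle; the main things to watch are the dominated-convergence justification for differentiating under the integral near $\xi=0,1$ and holding on to the inequality $\xi+u\ge\xi(1-\xi)$ that lets the absolute value around the logarithm be removed.
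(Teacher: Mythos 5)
Your proposal is correct and follows essentially the same route as the paper: differentiate the integral representation of $F_s(u)$ under the integral sign, bound $|(\xi+u)^{-s-k}|\le u^{-\sigma-k}$, and split $\bigl|\log\tfrac{\xi(1-\xi)}{\xi+u}\bigr|\le\log(1+u)+|\log(\xi(1-\xi))|$ to produce the constants $\tilde C_1(\sigma)$ and $\tilde C_2(\sigma)$. Your explicit justification of why no $|\log u|$ term survives (via $\xi(1-\xi)\le\xi+u$) and your remark that the logarithm-free term in the second estimate naturally yields $\tilde C_1(\sigma)\le\tilde C_2(\sigma)$ are correct points that the paper leaves implicit.
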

\begin{proof}
 Simple differentiation shows that with $s=\sigma + \I r$:
 \begin{gather*}
  |\frac{\partial}{\partial s} F_{s }(u)| = |\frac{1}{4\pi} \int_0^1 \log{\frac{\xi(1-\xi)}{\xi+u}} (\xi (1-\xi))^{s-1} (\xi+u)^{-s} d \xi| \leq\\
  \leq -\frac{1}{4\pi} \int_0^1 \log(\xi(1-\xi)) (\xi(1-\xi))^{\sigma-1} d \xi \; u^{-\sigma} + \frac{1}{4\pi} \int_0^1 (\xi(1-\xi))^{\sigma-1} d \xi \; u^{-\sigma} \log(1+u), \end{gather*}
  which yields the first estimate. The second estimate and the third estimate are obtained in a similar manner from
   \begin{gather*}
  |\frac{\partial^2}{\partial u\partial s} F_{s}(u)| \leq \frac{1}{4 \pi}  
  \int_0^1  (\xi(1-\xi))^{\sigma-1} (\xi+u)^{-\sigma-1} d\xi \\ -\frac{|s|}{4\pi} \int_0^1 \log(\frac{\xi(1-\xi)}{\xi+u}) (\xi(1-\xi))^{\sigma-1} (\xi+u)^{-\sigma-1} d \xi.
  \end{gather*}
 and
 \begin{gather*}
  |\frac{\partial^3}{\partial u^2 \partial s} F_{s}(u)| \leq \frac{|2s+1|}{4 \pi}  
  \int_0^1  (\xi(1-\xi))^{\sigma-2} (\xi+u)^{-\sigma-1} d\xi \\ -\frac{|s(s+1)|}{4\pi} \int_0^1 \log(\frac{\xi(1-\xi)}{\xi+u}) 
  (\xi(1-\xi))^{\sigma-1} (\xi+u)^{-\sigma-2} d \xi.
  \end{gather*}
\end{proof}

Near zero one obtains similarly.

\begin{lemma} \label{lemc2}
 Let $F_s(u)$ be the function defined in the previous section. Then, for any $\sigma>0$ we have the estimates
 \begin{gather*}
  |\frac{\partial}{\partial s} F_{s}(u)| \leq \tilde C_3(\sigma),\\
  |u \frac{\partial^2}{\partial u \partial s} F_{s}(u)| \leq  \tilde C_4(\sigma) + \tilde C_3(\sigma)(r^2+\sigma^2)^{1/2},\\
  |u^2 \frac{\partial^3}{\partial u^2 \partial s} F_{s}(u)| \leq  | 2s+1 | \tilde C_4(\sigma) + \tilde C_3(\sigma)|s(s+1)|,
 \end{gather*}
 where $s=\sigma+\I r$.
\end{lemma}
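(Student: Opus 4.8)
The plan is to imitate the proof of Lemma~\ref{lemc1}: differentiate the integral representation
\[
 F_s(u)=\frac{1}{4\pi}\int_0^1 (\xi(1-\xi))^{s-1}(\xi+u)^{-s}\,d\xi
\]
under the integral sign (legitimate for $u>0$, since the extra logarithmic factors are integrable on $[0,1]$), but then estimate the resulting integrals by constants depending only on $\sigma$ instead of by multiples of $u^{-\sigma}$. One obtains $\partial_s F_s(u)=\frac{1}{4\pi}\int_0^1 \log\bigl(\tfrac{\xi(1-\xi)}{\xi+u}\bigr)(\xi(1-\xi))^{s-1}(\xi+u)^{-s}\,d\xi$, and, pulling out of $\partial_u$ and $\partial_u^2$ the factors $-s$ and $s(s+1)$ produced by $(\xi+u)^{-s}$,
\[
 \partial_u\partial_s F_s(u)=-g_1-s\,h_1,\qquad \partial_u^2\partial_s F_s(u)=(2s+1)g_2+s(s+1)h_2,
\]
where $g_j=\frac{1}{4\pi}\int_0^1(\xi(1-\xi))^{s-1}(\xi+u)^{-s-j}\,d\xi$ and $h_j$ is the same integral with an extra factor $\log\bigl(\tfrac{\xi(1-\xi)}{\xi+u}\bigr)$. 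Moving absolute values inside, using $|(\xi(1-\xi))^{s-1}(\xi+u)^{-s}|=(\xi(1-\xi))^{\sigma-1}(\xi+u)^{-\sigma}$ and the elementary identity $\tfrac{\xi(1-\xi)}{\xi+u}=\tfrac{\xi}{\xi+u}(1-\xi)\le1$, which yields $\bigl|\log\tfrac{\xi(1-\xi)}{\xi+u}\bigr|=-\log\tfrac{\xi(1-\xi)}{\xi+u}=\log(1+u/\xi)+|\log(1-\xi)|\ge0$, the three assertions (recall $(r^2+\sigma^2)^{1/2}=|s|$) reduce to showing that the quantities $\sup_{u>0}\int_0^1 u^{\,j}(\xi(1-\xi))^{\sigma-1}(\xi+u)^{-\sigma-j}\,d\xi$ for $j=1,2$ and $\sup_{u>0}\int_0^1 u^{\,j}\bigl(\log(1+u/\xi)+|\log(1-\xi)|\bigr)(\xi(1-\xi))^{\sigma-1}(\xi+u)^{-\sigma-j}\,d\xi$ for $j=0,1,2$ are finite; one then defines $4\pi\tilde C_4(\sigma)$ and $4\pi\tilde C_3(\sigma)$ to be these suprema, and the prefactors $1$, $|s|$, $|2s+1|$, $|s(s+1)|$ reproduce the right-hand sides in the statement.

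To prove finiteness I would split $[0,1]$ into $[0,\tfrac12]$ and $[\tfrac12,1]$. On $[\tfrac12,1]$ one has $u^{\,j}(\xi+u)^{-\sigma-j}\le(\tfrac12+u)^{-\sigma}$ and $\log(1+u/\xi)\le\log(1+2u)$, while $|\log(1-\xi)|(\xi(1-\xi))^{\sigma-1}$ is integrable near $\xi=1$, so this part is bounded by a $\sigma$-dependent constant times $(1+\log(1+2u))(\tfrac12+u)^{-\sigma}$, which is bounded over $u>0$ because $\sigma>0$. On $[0,\tfrac12]$ the $|\log(1-\xi)|$-contribution is harmless: via $u^{\,j}(\xi+u)^{-\sigma-j}\le(\xi+u)^{-\sigma}\le\xi^{-\sigma}$ it is dominated by $\int_0^{1/2}|\log(1-\xi)|\,\xi^{-1}(1-\xi)^{\sigma-1}\,d\xi$, which is finite since $|\log(1-\xi)|\sim\xi$ at $\xi=0$. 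The genuinely singular contributions are the term $\log(1+u/\xi)$ (for all $j$) and the log-free integrand (for $j\ge1$); here the crude bound $(\xi+u)^{-\sigma}\le\xi^{-\sigma}$ produces a divergent $\xi$-integral and must be avoided. Instead I would substitute $\xi=u\eta$ on $[0,\tfrac12]$, which turns $u^{\,j}(\xi(1-\xi))^{\sigma-1}(\xi+u)^{-\sigma-j}\,d\xi$ into $(1-u\eta)^{\sigma-1}\eta^{\sigma-1}(1+\eta)^{-\sigma-j}\,d\eta$ and $\log(1+u/\xi)$ into $\log(1+1/\eta)$; after bounding $(1-u\eta)^{\sigma-1}\le\max(1,2^{1-\sigma})$ on the range $u\eta\le\tfrac12$, everything is dominated by the convergent Beta-type integrals $\int_0^\infty\eta^{\sigma-1}(1+\eta)^{-\sigma-j}\,d\eta$ ($j\ge1$) and $\int_0^\infty\log(1+1/\eta)\,\eta^{\sigma-1}(1+\eta)^{-\sigma-j}\,d\eta$ ($j\ge0$), which converge for every $\sigma>0$ because the integrands are $O(\eta^{\sigma-1}\log(1/\eta))$ at $\eta\to0$ and $O(\eta^{-1-j})$ (resp.\ $O(\eta^{-2-j})$) at $\eta\to\infty$. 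This produces explicit $\tilde C_3(\sigma),\tilde C_4(\sigma)$; for $\sigma=\tfrac12$ the one-dimensional integrals can be written in closed form.

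The main obstacle is exactly the behaviour near $\xi=0$: the estimate $(\xi+u)^{-\sigma}\le\xi^{-\sigma}$ that works in Lemma~\ref{lemc1} (where $u^{-\sigma}$ is allowed on the right) is precisely what one must not use here, and it has to be replaced by the rescaling $\xi=u\eta$ that trades the $u$-singularity for a convergent integral over $[0,\infty)$. Everything else—the differentiation under the integral, the extraction of the polynomial-in-$s$ prefactors, and the estimates away from $\xi=0$—is routine bookkeeping that parallels the proof of Lemma~\ref{lemc1}.
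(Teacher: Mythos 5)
Your proposal is correct and follows the same route the paper intends: the paper's entire ``proof'' of this lemma is the phrase ``Near zero one obtains similarly'', meaning one differentiates the integral representation exactly as in Lemma~\ref{lemc1} (your formulas $-g_1-sh_1$ and $(2s+1)g_2+s(s+1)h_2$ match the displays there) and defines $\tilde C_3(\sigma)$, $\tilde C_4(\sigma)$ as suprema over $u>0$ of the resulting one-dimensional integrals, as the paper's explicit expression for $\tilde C_4(1/2)$ confirms. Your substitution $\xi=u\eta$ correctly establishes the uniform-in-$u$ finiteness of these suprema near $\xi=0$, which is the only nontrivial point and exactly the detail the paper leaves out.
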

In the important case $\sigma=1/2$ one obtains
\begin{gather*}
 \tilde C_3(1/2)=\frac{\pi}{4},\\
 \tilde C_4(1/2)=\sup_{u>0} \frac{1}{4 \pi}\int_{0}^1 \frac{u}{(\xi+u)^{3/2}} \frac{1}{\sqrt{\xi(1-\xi)}} d\xi.
\end{gather*}
The latter constant can numerically be calculated and is approximately given by
$$
 \tilde C_4(1/2) \approx 0.167878.
$$
%

\section{Resolvent estimates on hyperbolic cylinders} \label{resestcly}

Let  $Z$ be the hyperbolic cylinder obtained 
by factorizing the upper half space by the subgroup
of $SL(2,\mathbb{R})$ generated by the element $\left(\begin{matrix} \mathrm{e}^{\ell/2} & 0 \\ 0 & \mathrm{e}^{-\ell/2}\end{matrix}\right)$.
We use Fermi-coordinates $(\rho,t)$ which identifies the cylinder with $\mathbb{R} \times \mathbb{R}/ (\ell \mathbb{Z})$.
The integral kernel of the resolvent of the Laplace operator $\Delta_Z$ can be constructed from the resolvent
of $\Delta_{\mathbb{H}}$ by averaging over the group:
Let $x=(\rho,t)$ and $x'=(\rho',t')$ be distinct points on the hyperbolic cylinder and denote by $k_s(x,x')$
the integral kernel of $(\Delta_\mathbb{H}-s(1-s))^{-1}$. Then the integral kernel $k_s^Z(x,x')$
of $(\Delta_Z-s(1-s))^{-1}$ is given by
$$
 k_s^Z((\rho,t),(\rho',t')) = \sum_{n \in \mathbb{Z}} k_s((\rho,t),(\rho',t'+n \ell)) =\sum_{n \in \mathbb{Z}} k_s((\rho,t-t'),(\rho',n \ell))
$$
The sum converges absolutely whenever $\Re(s)>0$ because of the estimate in Lemma \ref{lem1}
and the fact that the distance between the points $(\rho,t)$ and $(\rho',t'+n \ell)$ grows like
$n \ell$ as $|n| \to \infty$.

Combining the estimates in Lemma \ref{lemc1} with Lemma \ref{lem3} we obtain

\begin{lemma} \label{greenestcyl}
 For any compact subset $M \subset Z$ we have on $M \times M$ the following estimates
 \begin{gather*}
  | k_s^Z(x,x')|   \leq \frac{1}{2 \pi} | \log{\rho(x,x')} | + C_1^M(\sigma),\\
  |\mathrm{grad}_{x'} k_s^Z(x,x')| \leq \frac{C_2^M(s)}{2 \pi} \frac{1}{\rho(x,x')} + C_3^M(s),
 \end{gather*}
 where $\rho(x,x') = 2 \,\mathrm{arcsinh}(u(x,x')^{1/2})$ is the distance between $x$ and $x'$
 and $\sigma=\Re(s)$.
\end{lemma}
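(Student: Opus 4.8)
The plan is to use the defining series $k_s^Z((\rho,t),(\rho',t'))=\sum_{n\in\mathbb{Z}}k_s((\rho,t-t'),(\rho',n\ell))$, where $k_s=F_s\circ u$, and to isolate the single term $n=0$, which carries the logarithmic diagonal singularity, from the tail $\sum_{n\neq0}$, which I will show is bounded uniformly on $M\times M$.

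First I would record the elementary dictionary between the point-pair invariant and the distance. From $u=\tfrac12(\cosh\mathrm{dist}-1)=\sinh^2(\tfrac12\mathrm{dist})$ one gets $\sqrt u=\sinh(\rho/2)$ with $\rho=\mathrm{dist}(x,x')$ (so $\rho=2\,\mathrm{arcsinh}(\sqrt u)$, as in the statement) and, by the chain rule, $|\mathrm{grad}_{x'}u|=\tfrac12\sinh(\mathrm{dist}(x,x'))\,|\mathrm{grad}_{x'}\mathrm{dist}|=\sqrt{u(1+u)}$. Since $\sinh$ is convex with $\sinh0=0$ we have $\sinh(\rho/2)\ge\rho/2$, hence $u^{-1/2}\le 2/\rho$, and on the compact set $M\times M$ the quantity $u(x,x')$ is bounded, say by $u_M$.

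For the $n=0$ term $k_s(x,x')=F_s(u(x,x'))$ I would apply Lemma~\ref{lem3}. Writing $\log\tfrac{1+u}{u}=\log(1+u)-2\log\sinh(\rho/2)$, the first summand is bounded by $\log(1+u_M)$ on $M\times M$, while $-2\log\sinh(\rho/2)\le 2\log2-2\log\rho\le 2|\log\rho|+2\log2$ when $\rho\le1$, and for $\rho\ge1$ both $-2\log\sinh(\rho/2)$ and $\log\tfrac{1+u}{u}$ are bounded on the compact set while $|\log\rho|\ge0$, so the desired inequality is automatic there. This yields $|k_s(x,x')|\le\tfrac1{2\pi}|\log\rho(x,x')|+C$ with $C$ depending only on $M$ and $\sigma=\Re(s)$, which in particular pins down the constant $\tfrac1{2\pi}$. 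For the gradient I would use the second inequality of Lemma~\ref{lem3}, multiply $|F_s'(u)|\le\tfrac{C_2(s)}{4\pi u}+C_3(\sigma)$ by $|\mathrm{grad}_{x'}u|=\sqrt{u(1+u)}$ to get $|\mathrm{grad}_{x'}k_s|\le\tfrac{C_2(s)}{4\pi}\sqrt{\tfrac{1+u}{u}}+C_3(\sigma)\sqrt{u(1+u)}$, and then bound $\sqrt{(1+u)/u}\le 2\sqrt{1+u_M}\,/\rho$ and $\sqrt{u(1+u)}\le\sqrt{u_M(1+u_M)}$ on $M\times M$; this is exactly the claimed shape $\tfrac{C_2^M(s)}{2\pi\rho}+C_3^M(s)$, with e.g. $C_2^M(s)=C_2(s)\sqrt{1+u_M}$.

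Finally, for the tail I would observe that the distance from $x$ to the $n$-th translate of $x'$ is bounded below by a quantity growing linearly in $|n|$, uniformly for $x,x'\in M$ (by compactness of $M$ and the explicit form of the generating hyperbolic element $\mathrm{diag}(e^{\ell/2},e^{-\ell/2})$), so the corresponding invariants $u_n(x,x')$ grow exponentially in $|n|$. By the first inequality of Lemma~\ref{lem1}, $|F_s(u_n)|\le\tfrac1{4\pi}\tfrac{\Gamma(\sigma)^2}{\Gamma(2\sigma)}u_n^{-\sigma}$, and, using $|\mathrm{grad}_{x'}u_n|=\sqrt{u_n(1+u_n)}\le\sqrt2\,u_n$ once $u_n\ge1$, the second inequality of Lemma~\ref{lem1} gives $|\mathrm{grad}_{x'}F_s(u_n)|\le\tfrac{\sqrt2|s|}{4\pi}\tfrac{\Gamma(\sigma)^2}{\Gamma(2\sigma)}u_n^{-\sigma}$. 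Summing the series $\sum_{n\neq0}u_n^{-\sigma}$ then produces finite constants, uniform on $M\times M$, which get absorbed into $C_1^M(\sigma)$ and $C_3^M(s)$; this same uniform convergence justifies differentiating the periodization term by term. I expect the only real work to be the bookkeeping of constants in the $u\leftrightarrow\rho$ dictionary and the verification of the uniform exponential lower bound on the $u_n$; the analytic substance is entirely contained in Lemmata~\ref{lem1} and~\ref{lem3}, so I do not anticipate a genuine obstacle.
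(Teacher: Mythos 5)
Your proposal is correct and follows exactly the route the paper intends: the paper gives no detailed proof, merely asserting that the lemma follows by combining the near-diagonal estimates of Lemma~\ref{lem3} (for the $n=0$ term) with the decay estimates of Lemma~\ref{lem1} (for the periodization tail, using the linear growth of the translated distances). Your write-up correctly supplies the $u\leftrightarrow\rho$ dictionary and the constant bookkeeping that the paper leaves implicit.
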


Similarly Lemma \ref{lemc1} and Lemma \ref{lemc2}  imply the following estimates
\begin{lemma} \label{greenestcylder}
 Let $Z^L \subset Z$ be a truncated hyperbolic cylinder and let $d>0$. Then for
 all points $x \in Z^L$ and $x' \in \partial Z^L$ with $\rho(x,x')\geq d$ the following estimates hold.
 \begin{gather*}
  | \frac{\partial}{\partial s} k_{s}^Z(x,x') | \leq C_1^{M,d}(\sigma),\\
  | \frac{\partial}{\partial s}\mathrm{grad}_{x'} k_{s}^Z(x,x')| 
  \leq C_2^{M,d}(s),\\
   | \frac{\partial}{\partial s}\mathrm{grad}_{x'} \mathrm{grad}_{x} k_{s}^Z(x,x')| \leq C_3^{M,d}(s).
 \end{gather*}
\end{lemma}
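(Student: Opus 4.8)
The plan is to differentiate the averaged resolvent kernel
\[
 k_s^Z(x,x') = \sum_{n\in\mathbb{Z}} k_s\bigl((\rho,t-t'),(\rho',n\ell)\bigr) = \sum_{n\in\mathbb{Z}} F_s\bigl(u_n(x,x')\bigr)
\]
term by term, where $u_n(x,x')$ denotes the point pair invariant between $x$ and the $n$-th translate $x'_n$ of $x'$. There are two ingredients. The first is the explicit form of $u$ in Fermi coordinates, which shows that on the compact set $Z^L$ the functions $u_0$, $\mathrm{grad}\,u_0$ and $\mathrm{grad}\,\mathrm{grad}\,u_0$ are bounded, that $\rho(x,x')\geq d$ forces $u_0(x,x')\geq\sinh^2(d/2)=:u_d>0$, and that for $|n|\geq 1$ one has $\rho(x,x'_n)\geq c_0|n|$ and hence $u_n(x,x')\geq c_1\,\mathrm{e}^{c_0|n|}$ with $c_0,c_1>0$ depending only on $Z^L$. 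The second is the pair of Lemmata \ref{lemc1} and \ref{lemc2}, which bound $\partial_s F_s$, $\partial_u\partial_s F_s$ and $\partial_u^2\partial_s F_s$ for large and for small $u$ respectively.

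By the chain rule $\frac{\partial}{\partial s}F_s(u_n)=(\partial_s F_s)(u_n)$, $\mathrm{grad}_{x'}\frac{\partial}{\partial s}F_s(u_n)=(\partial_u\partial_s F_s)(u_n)\,\mathrm{grad}_{x'}u_n$, and $\mathrm{grad}_{x'}\mathrm{grad}_x\frac{\partial}{\partial s}F_s(u_n)=(\partial_u^2\partial_s F_s)(u_n)\,(\mathrm{grad}_{x'}u_n)\otimes(\mathrm{grad}_x u_n)+(\partial_u\partial_s F_s)(u_n)\,\mathrm{grad}_{x'}\mathrm{grad}_x u_n$. For the tail $|n|\geq 1$ I would substitute the bounds of Lemma \ref{lemc1}, which have the form $P(|s|)\bigl(\log(1+u)+1\bigr)u^{-\sigma-j}$ with $P$ polynomial and $j\in\{0,1,2\}$, together with the elementary estimates $|\mathrm{grad}\,u_n|=\sqrt{u_n(1+u_n)}$ and $|\mathrm{grad}\,\mathrm{grad}\,u_n|\leq A(1+u_n)$ (the latter from $\partial_\rho^2 u=\tfrac{1+2u}{2}$ and the boundedness of the Hessian of the distance function along the relevant geodesics); since $u_n\geq c_1\mathrm{e}^{c_0|n|}$, each resulting term is dominated by $Q(|n|,|s|)\,\mathrm{e}^{-c_0\sigma|n|}$ for a polynomial $Q$, so the differentiated series converges absolutely and uniformly — which simultaneously justifies the term-by-term differentiation — and sums to an explicit constant of the claimed type. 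For the single term $n=0$ I would instead use Lemma \ref{lemc2}: its right-hand sides become bounded once multiplied by $u_0^{j}$, so dividing back by $u_0^{j}\geq u_d^{j}$ and multiplying by $\sup_{Z^L}|\mathrm{grad}\,u_0|$ and $\sup_{Z^L}|\mathrm{grad}\,\mathrm{grad}\,u_0|$ gives bounds depending only on $d$, $Z^L$, and $\sigma$ (resp. $s$). Combining the $n=0$ term with the tail produces the three asserted estimates with constants $C_1^{M,d}(\sigma)$, $C_2^{M,d}(s)$, $C_3^{M,d}(s)$.

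The main obstacle is the bookkeeping in the third estimate, where one must control the mixed second spatial derivative $\mathrm{grad}_{x'}\mathrm{grad}_x u_n$ uniformly in $n$ and over $Z^L$; once the bound $|\mathrm{grad}_{x'}\mathrm{grad}_x u_n|\leq A(1+u_n)$ is in place, everything else reduces to the routine summation of geometric-type series described above. A minor point worth checking separately is the interchange of $\partial_s$ with the spatial gradients and with the sum over $n$, which is justified by the same uniform majorant.
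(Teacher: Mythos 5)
Your proposal is correct and follows essentially the same route as the paper, which justifies the lemma in a single sentence by combining the group-averaged kernel $k_s^Z=\sum_n k_s(\cdot,\cdot+n\ell)$ with Lemmata \ref{lemc1} (for the tail $|n|\geq 1$, where $u_n$ grows exponentially) and \ref{lemc2} (for the $n=0$ term, where $\rho(x,x')\geq d$ keeps $u_0$ bounded below). Your write-up simply makes explicit the chain-rule bookkeeping, the bounds on $\mathrm{grad}\,u$ and the mixed Hessian of $u$, and the uniform majorant justifying term-by-term differentiation, all of which the paper leaves implicit.
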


Suppose now that $M \subset Z$ is a subset of the hyperbolic cylinder $Z$ whose boundary $\partial M$
is a finite union of geodesic segments. If $\psi$ is a function on $M$, smooth up the the boundary,
such that
$$
 (\Delta_Z-s(1-s)) \psi = 0
$$
in the interior of $M$.
Let $\phi_1$ and $\phi_2$ be the restriction of $\psi$ to the boundary and its outward normal
derivative respectively.
Of course, as functions on $Z$ we have
$$
 (\Delta_Z-s(1-s)) (\chi_M \psi) = h
$$ 
in the sense of distributions, where $h$ if the distribution defined as
$$
 h(f) := \int_{\partial M} (\partial_n f)(x) \phi_1(x) + f(x) \phi_2(x) dx.
$$
From this it follows that $\psi$ in the interior of $M$ can be expressed as
$$
 \psi=(\Delta_Z-s(1-s))^{-1} h.
$$
This is sometimes written in integral form
$$
 \psi(x) = \int_{\partial M} \partial_{n,x'} k_s^Z(x,x') \phi_1(x') + k_s^Z(x,x') \phi_2(x') dx'. 
$$
Using, Lemma \ref{greenestcyl} and applying the generalized Young inequality we obtain the
following estimate.

\begin{gather} \label{l2bound}
 \| \psi \|_{L^2(M)} \leq C_{M}(s) \left( \| \phi_1 \|_{L^2(\partial M)}^2 + \| \phi_2 \|_{L^2(\partial M)}^2 \right)^{1/2}.
\end{gather}

\section{Appendix-Estimates on the Counting Function}

Let $\mathrm{H}$ be the Heaviside step function defined by
$$
 \mathrm{H}(y) = \left \{ \begin{matrix} 1 & y>0 \\ 1/2  & y=0 \\ 0 & y<0 \end{matrix} \right.
$$
Suppose that $X$ is a compact connected oriented hyperbolic surface and 
let $(\phi_i)_{i \in \nz_0}$ be an orthonormal basis of eigenfunctions for the Laplace
operator on $X$. Let $$0=\lambda_0<\lambda_1\leq \lambda_2 \leq \ldots$$ be
the corresponding sequence of eigenvalues. Let $e(x,y,\tau)$ be integral kernel
of the operator $\mathrm{sign}(\tau) \mathrm{H}(\tau^2 - \Delta)$ and denote by $N_x(\tau)=e(x,x,\tau)$ be its restriction to the diagonal.
Of course, by the spectral theorem,
$$
 N_x(\tau)=\mathrm{sign}(\tau) \sum_{i} \mathrm{H}(\tau^2 - \lambda_i) | \phi_i(x) |^2.
$$
Integrating this over $X$ yields the eigenvalue counting function $N(\tau)$:
$$
 N(\tau)=  \mathrm{sign}(\tau) \sum_{i} \mathrm{H}(\tau^2 - \lambda_i).
$$
By construction $N_x$ and $N$ are non-decreasing odd functions on $\rz$.
Let $d(x)$ be twice the injectivity radius at the point $x \in X$. If
$L$ is the length of a systole we have the estimate $r(x) \leq L$.
Using Selberg's pretrace trace formula and finite propagation 
speed one can easily see that the cosine transform
of $N'_x(r)$ coincides on the interval $(-d(x),d(x))$ with the cosine transform of the function
$F'(r)$, where $F(t)$ is defined by
\begin{gather*}
 F(0)=0,\\
 F'(t)=2 \frac{1}{4 \pi} \mathrm{H}(t^2-1/4) | t | \tanh(\pi \sqrt{t^2-1/4}).
\end{gather*}
Note that since
$$
 \int_{1/2}^\infty 2  t (\tanh(\pi \sqrt{t^2-1/4})-1) dt=-\frac{1}{12}
$$
and $\tanh(\pi \sqrt{t^2-1/4})-1$ is non-positive on $[1/2,\infty)$
we obtain the estimate
$$
 -\frac{1}{3} \frac{1}{4 \pi} \leq \mathrm{sign}(t) G(t) \leq 0,
$$
where
 $G(t)=F(t) - \mathrm{sign}(t) \frac{1}{4 \pi} t^2$.
Let $\nu$ be the first Dirichlet eigenvalue of the operator $\frac{d^4}{dx^4}$ on the
interval $[-1/2,1/2]$ and let $\phi(x)$ be the corresponding normalized eigenfunction.
We think of $\phi$ as a function on $\rz$ by extending by zero.
It can be easily worked out that $\nu$ is the smallest non-zero solution to the equation
$\cosh(\lambda) \cos(\lambda)=1$ for $\lambda>0$. Its numerical value is 
$\nu \approx 4.73004074$. The estimate $\nu<5$ is an immediate consequence if the
intermediate value theorem.
As a test function in the Fourier Tauberian theorem we use the
function $\rho=(\hat \phi)^2$. Let $\rho_\delta$ and $\rho_{\delta,0}$
be defined as in \cite{MR1853753} as
\begin{gather*}
 \rho_\delta(r)=\delta \rho(\delta r),\\
 \rho_{\delta,0}(r)=\delta \rho_{1,0}(\delta r),
\end{gather*}
where
$$
 \rho_{1,0}(r) = \int_r^\infty t \rho(t) dt.
$$
Using $\rho_{\delta,0}(r)>0$ and $\|\rho_{\delta}\|_{L^1}=1$ we obtain
\begin{gather} \label{esteem1}
 (\rho_\delta * G)(r) \leq \| G \|_\infty = \frac{1}{12 \pi}\\  \label{esteem2}
  (\rho_{\delta,0} * G')(r) \leq 0.
\end{gather}
The Fourier Tauberian Theorem 1.3 in \cite{MR1853753} together with the estimates
Equations (2.9) and (2.10) in \cite{MR1853753} and our estimates (\ref{esteem1}) and (\ref{esteem2})
imply the following bounds of $N$.
\begin{theorem} \label{esth1}
 The counting function on a hyperbolic surface satisfies.
\begin{gather*}
 N_x(r) \leq \frac{1}{4\pi}\left( r^2 + \frac{4 \nu^2 +2 \nu \pi}{\pi d(x)} (r + \frac{\nu}{d(x)}) + \frac{1}{3}\right),\\
 N_x(r) \geq \frac{1}{4 \pi} \left( r^2 - \frac{4 \nu^2}{ \pi d(x)} (r+ \frac{\nu}{d(x)}) -\frac{1}{3}\right).
 \end{gather*}
\end{theorem}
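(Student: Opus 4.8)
The plan is to obtain both bounds directly from the Fourier (cosine) Tauberian theorem, Theorem~1.3 of \cite{MR1853753}, applied to the non-decreasing odd function $N_x$ and the explicit model function $F$. The single geometric ingredient is already in place: by Selberg's pretrace formula together with finite propagation speed for the wave equation, the distributional cosine transform of $N_x'$ coincides with that of $F'$ on $(-d(x),d(x))$, because within a ball of that radius about $x$ the surface is isometric to $\mathbb{H}$ and $F$ is exactly the restriction to the diagonal of the spectral function of the hyperbolic plane. Consequently, for any even $\rho\ge 0$ with $\int\rho=1$ and $\widehat\rho$ supported in $[-1,1]$, writing $\rho_\delta(r)=\delta\,\rho(\delta r)$ with $\delta=d(x)$, one gets the \emph{exact} identities $\rho_\delta * N_x' = \rho_\delta * F'$ and $\rho_{\delta,0}*N_x' = \rho_{\delta,0}*F'$, so the right-hand sides are computable from the known function $F$.

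The test function is the one already introduced: $\phi$ the $L^2$-normalized clamped-beam eigenfunction on $[-1/2,1/2]$ associated with $\nu$, extended by zero, and $\rho=(\widehat\phi)^2\ge 0$. Then $\widehat\rho$ is the convolution of two functions supported in $[-1/2,1/2]$, hence supported in $[-1,1]$; Plancherel gives $\int\rho=\|\widehat\phi\|_2^2=1$, and — this is how $\nu$ enters the final constants — the moments of $\rho$ that measure the width of $\rho_\delta$ are controlled by $\nu$ (together with the elementary bound $\nu<5$). After rescaling, $\widehat{\rho_\delta}$ is supported in $[-\delta,\delta]$ with $\delta=d(x)$, which is exactly what the matching step of the previous paragraph requires, while the associated primitive $\rho_{\delta,0}$ remains non-negative, so it can be used to control $N_x$ itself and not only $N_x'$.

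Feeding this into Theorem~1.3 of \cite{MR1853753} through its inequalities (2.9)--(2.10), one bounds $N_x(r)$ above and below by the mollified model plus Tauberian remainders which are of order $\delta^{-1}$, resp.\ $\delta^{-2}$, times moments of $\rho$ — and these moments, e.g.\ $\int r^2\rho = \|\phi'\|_2^2$, are bounded by powers of $\nu$. On the model side, convolution with the even, mass-one kernel $\rho_\delta$ reproduces the Weyl term $\tfrac{1}{4\pi}r^2$ up to a constant of this same order, which accounts for the $\tfrac{\nu}{d(x)}$-corrections, while the bounded part $G(t)=F(t)-\mathrm{sign}(t)\tfrac{1}{4\pi}t^2$ contributes exactly the $\pm\tfrac13$ in the two bounds via the estimates $(\rho_\delta*G)(r)\le\|G\|_\infty=\tfrac{1}{12\pi}$ and $(\rho_{\delta,0}*G')(r)\le 0$, i.e.\ (\ref{esteem1}) and (\ref{esteem2}). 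The asymmetry between the upper coefficient $4\nu^2+2\nu\pi$ and the lower coefficient $4\nu^2$ reflects precisely the one-sidedness of (\ref{esteem2}): the upper estimate picks up an extra $\tfrac{1}{12\pi}$-type contribution that the lower one does not.

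The part I expect to require care is not conceptual but arithmetic: matching the constants. One must track (i) the Fourier normalization so that $\widehat{\rho_\delta}$ is supported in $(-d(x),d(x))$ with $\delta=d(x)$ exactly, (ii) the precise moment inequalities for the clamped-beam eigenfunction $\phi$ in terms of $\nu$, and (iii) the exact form of (2.9)--(2.10) in \cite{MR1853753}, so that the remainders assemble into precisely $\tfrac{1}{4\pi}\bigl(\tfrac{4\nu^2+2\nu\pi}{\pi d(x)}(r+\tfrac{\nu}{d(x)})+\tfrac13\bigr)$ and its lower-bound counterpart. That an estimate of this shape holds at all is immediate from the cited Tauberian theorem once the cosine transforms are known to agree on $(-d(x),d(x))$; extracting the explicit coefficients is the only genuine work.
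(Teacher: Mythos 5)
Your proposal follows exactly the route of the paper: matching the cosine transform of $N_x'$ with that of $F'$ on $(-d(x),d(x))$ via the pretrace formula and finite propagation speed, bounding $G=F-\mathrm{sign}(t)\tfrac{1}{4\pi}t^2$ by $\tfrac{1}{12\pi}$ with $\mathrm{sign}(t)G(t)\le 0$, taking $\rho=(\hat\phi)^2$ for the clamped-beam eigenfunction associated with $\nu$, and invoking Theorem~1.3 and inequalities (2.9)--(2.10) of \cite{MR1853753} together with (\ref{esteem1}) and (\ref{esteem2}). This is precisely the paper's argument (which the paper likewise leaves at the level of assembling these ingredients), so the proposal is correct and not a different approach.
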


Define the function $\tilde N_x(\lambda)=\mathrm{H}(\lambda) N_x(\sqrt{|\lambda|})$.
The local heat kernel trace $k_t(x)$ for $t>0$ may be defined as follows
$$
 k_t(x) = \sum_{\lambda_j \geq 0} \mathrm{e}^{-\lambda_j t} | \phi_i(x) |^2.
$$
If this sum is cut off at a point $c>0$ the remainder is given by
$$
 R^c_t(x) = \sum_{\lambda_j \geq c} \mathrm{e}^{-\lambda_j t} | \phi_i(x) |^2.
$$

Then of course integration by parts gives the following formula
$$
 R^c_t(x) =- \lim_{\epsilon \to 0^+} \tilde N_x(c + \epsilon) \mathrm{e}^{-c t} + t \int_c^\infty \tilde N_x(\lambda) \mathrm{e}^{-\lambda t} d \lambda.
$$
Combining this with the estimate from theorem \ref{esth1} we obtain
\begin{lemma} \label{heattrb}
The remainder $R^c_t(x)$ satisfies the estimate
\begin{gather*}
 4 \pi R^c_t(x) \leq \mathrm{e}^{-c t} \left( B_u(x) +B_l(x) + (A_u(x)+A_l(x)) \sqrt{c} +\frac{1}{t} \right)+\\+
 A_u(x)\sqrt{\frac{\pi}{4 t}} (1-\mathrm{erf}(\sqrt{c t})),
\end{gather*}
where
\begin{gather*}
 A_u(x)=\frac{4 \nu^2 + 2 \nu \pi}{\pi d(x)}, \quad
 B_u(x)=\frac{4 \nu^3 + 2 \nu^2 \pi}{\pi d(x)^2}+\frac{1}{3},\\
 A_l(x)=\frac{4 \nu^2}{\pi d(x)},\quad
 B_l(x)=\frac{4 \nu^3}{\pi d(x)^2}+\frac{1}{3}.
\end{gather*}
\end{lemma}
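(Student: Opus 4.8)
\section*{Proof proposal for Lemma \ref{heattrb}}

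The plan is to feed the two-sided bounds of Theorem \ref{esth1} into the integration-by-parts identity recorded just above the statement,
$$
 R^c_t(x) = - \lim_{\epsilon \to 0^+} \tilde N_x(c + \epsilon)\, \mathrm{e}^{-c t} + t \int_c^\infty \tilde N_x(\lambda)\, \mathrm{e}^{-\lambda t}\, d\lambda .
$$
Since $\tilde N_x(\lambda)=\mathrm{H}(\lambda)N_x(\sqrt{|\lambda|})$, substituting $r=\sqrt{\lambda}$ in Theorem \ref{esth1} gives, for $\lambda>0$,
$$
 \frac{1}{4\pi}\bigl(\lambda - A_l(x)\sqrt{\lambda} - B_l(x)\bigr) \;\leq\; \tilde N_x(\lambda) \;\leq\; \frac{1}{4\pi}\bigl(\lambda + A_u(x)\sqrt{\lambda} + B_u(x)\bigr),
$$
where the $\tfrac13$ of Theorem \ref{esth1} together with the $\tfrac{\nu}{d(x)}$ term has been absorbed into $B_u(x)$, resp. $B_l(x)$, exactly as in the statement. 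The boundary term enters with a minus sign, so I would estimate $\tilde N_x(c+\epsilon)$ from \emph{below}, using monotonicity of $\tilde N_x$ and the lower bound above, and estimate $\tilde N_x(\lambda)$ inside the integral from \emph{above}. Letting $\epsilon\to 0^+$ this yields
$$
 4\pi R^c_t(x) \;\leq\; \bigl(-c + A_l(x)\sqrt{c} + B_l(x)\bigr)\mathrm{e}^{-ct} + t\int_c^\infty \bigl(\lambda + A_u(x)\sqrt{\lambda} + B_u(x)\bigr)\mathrm{e}^{-\lambda t}\, d\lambda .
$$

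It then remains to evaluate three elementary integrals. One has $t\int_c^\infty \lambda\,\mathrm{e}^{-\lambda t}\,d\lambda = (c+\tfrac1t)\mathrm{e}^{-ct}$ and $t\int_c^\infty \mathrm{e}^{-\lambda t}\,d\lambda = \mathrm{e}^{-ct}$; the first of these produces the $+c$ that cancels the $-c$ coming from the boundary term, and leaves behind $\tfrac1t\mathrm{e}^{-ct}$ and $B_u(x)\mathrm{e}^{-ct}$. For the half-integer power I would substitute $u=\sqrt{\lambda t}$ to reduce $t\int_c^\infty \sqrt{\lambda}\,\mathrm{e}^{-\lambda t}\,d\lambda$ to $\tfrac{2}{\sqrt t}\int_{\sqrt{ct}}^\infty u^2\mathrm{e}^{-u^2}\,du$ and then use $\int_a^\infty u^2\mathrm{e}^{-u^2}\,du = \tfrac{a}{2}\mathrm{e}^{-a^2} + \tfrac{\sqrt\pi}{4}\bigl(1-\mathrm{erf}(a)\bigr)$, obtaining $\sqrt{c}\,\mathrm{e}^{-ct} + \sqrt{\tfrac{\pi}{4t}}\bigl(1-\mathrm{erf}(\sqrt{ct})\bigr)$. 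Multiplying by $A_u(x)$, adding the $A_l(x)\sqrt{c}\,\mathrm{e}^{-ct}$ from the boundary term, and collecting all $\mathrm{e}^{-ct}$ contributions gives precisely
$$
 4\pi R^c_t(x) \leq \mathrm{e}^{-ct}\Bigl(B_u(x)+B_l(x) + (A_u(x)+A_l(x))\sqrt{c} + \tfrac1t\Bigr) + A_u(x)\sqrt{\tfrac{\pi}{4t}}\bigl(1-\mathrm{erf}(\sqrt{ct})\bigr),
$$
which is the assertion.

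There is no deep obstacle: the lemma is a bookkeeping consequence of Theorem \ref{esth1}. The two points that need care are (i) selecting the correct one-sided bound at each occurrence of $\tilde N_x$ — the lower bound for the subtracted boundary term and the upper bound inside the integral — so that the inequality points the right way, and (ii) the substitution handling the $\sqrt{\lambda}$ term, which is the sole origin of the error-function contribution. I would also note in passing that $1-\mathrm{erf}(\sqrt{ct})\ge 0$, so the last summand is a genuine (and, for $ct$ large, small) correction and the whole right-hand side decays like $\mathrm{e}^{-ct}$ up to the $t^{-1/2}$ factor; this decay is exactly what makes the bound usable for truncating the heat trace in Section \ref{zeta}.
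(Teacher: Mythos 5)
Your proposal is correct and is exactly the argument the paper intends: it simply combines the integration-by-parts identity for $R^c_t(x)$ with the two-sided bounds of Theorem \ref{esth1} (lower bound on the subtracted boundary term, upper bound under the integral) and evaluates the three elementary integrals, with the $\sqrt{\lambda}$ term producing the error-function contribution. All the constants check out, so nothing is missing.
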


Setting $c=0$ one obtains a bound for the local heat kernel.
The heat trace
$$
 \tr(e^{-\Delta t}) = \sum_{\lambda_j\geq 0} \mathrm{e}^{-\lambda_j t}
$$
is obtained by integrating the local heat trace. Lemma \ref{heattrb}
therefore yields bounds on the heat trace in case $c=0$
and on
$$
 \sum_{\lambda_j \geq c} e^{-\lambda_j t} = \int_X R^c_t(x) dx.
$$
in general.

Theorem \ref{esth1} also immediately gives bounds on the eigenfunctions
\begin{corollary} \label{lbound}
 Let $\psi(x)$ be an eigenfunction of the Laplace operator with eigenvalue $\lambda$
 such that $\| \psi \|_{L^2}=1$.
 Then,
 $$
   | \psi(x) |^2 \leq \frac{1}{4 \pi} \left(\frac{8 \nu^2 +2 \nu \pi}{\pi d(x)} (\lambda^{1/2} + \frac{\nu}{d(x)}) + \frac{2}{3} \right).
 $$
\end{corollary}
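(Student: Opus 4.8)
The plan is to read the corollary off the local Weyl bounds of Theorem~\ref{esth1}, by interpreting $|\psi(x)|^2$ as being controlled by the size of a jump of the local counting function $N_x$. First I would reduce to a statement about the whole eigenspace. Since $\psi$ lies in the finite-dimensional eigenspace $E_\lambda$ of $\Delta$ for the eigenvalue $\lambda$ and is $L^2$-normalized, one may choose an orthonormal basis $\{\phi_i\}_{i\in\nz_0}$ of $L^2(X)$ consisting of eigenfunctions with $\phi_{i_0}=\psi$ for some index $i_0$. Then
$$
 |\psi(x)|^2=|\phi_{i_0}(x)|^2\le \sum_{i\,:\,\lambda_i=\lambda}|\phi_i(x)|^2=:m_\lambda(x).
$$
The number $m_\lambda(x)$ is the value on the diagonal of the integral kernel of the spectral projector onto $E_\lambda$ and hence is independent of the choice of orthonormal basis, so it coincides with the analogous sum formed from whatever orthonormal basis is implicit in Theorem~\ref{esth1}.

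Next I would express $m_\lambda(x)$ through $N_x$. From the definition $N_x(\tau)=\mathrm{sign}(\tau)\sum_i \mathrm{H}(\tau^2-\lambda_i)|\phi_i(x)|^2$ it is clear that for $\tau>0$ the function $\tau\mapsto N_x(\tau)$ is non-decreasing and that its jump at $\tau=\lambda^{1/2}$ equals precisely $m_\lambda(x)$. In particular, for every $\epsilon\in(0,\lambda^{1/2})$ monotonicity gives
$$
 m_\lambda(x)\le N_x(\lambda^{1/2}+\epsilon)-N_x(\lambda^{1/2}-\epsilon).
$$
Now I would apply the upper bound of Theorem~\ref{esth1} to the first term and the lower bound to the second. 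Both envelopes appearing there are continuous (indeed quadratic polynomials in $r$ with coefficients depending only on $d(x)$), so letting $\epsilon\to 0^+$ yields $m_\lambda(x)\le U(\lambda^{1/2})-L(\lambda^{1/2})$, where $U$ and $L$ denote the upper and lower bounds of Theorem~\ref{esth1}. A direct subtraction gives
$$
 U(\lambda^{1/2})-L(\lambda^{1/2})=\frac1{4\pi}\Bigl(\frac{8\nu^2+2\nu\pi}{\pi d(x)}\bigl(\lambda^{1/2}+\tfrac{\nu}{d(x)}\bigr)+\frac23\Bigr),
$$
since the $r^2$ terms cancel and the two $\tfrac13$ terms add; combined with the first step this is exactly the claimed estimate.

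There is no essential difficulty here beyond Theorem~\ref{esth1} itself; the two points that need a little care are the basis-independence of $m_\lambda(x)$ (so that a bound proved for one orthonormal basis may be invoked for our $\psi$) and the correct handling of the one-sided limits of the step function $N_x$ at $\lambda^{1/2}$ --- specifically, that the jump is dominated by $U(\lambda^{1/2})-L(\lambda^{1/2})$ by continuity of the envelopes, so that one need not adapt $\epsilon$ to the spectral gaps around $\lambda$.
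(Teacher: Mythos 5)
Your proposal is correct and is essentially the paper's own (implicit) argument: the paper derives Corollary~\ref{lbound} directly from Theorem~\ref{esth1} by bounding $|\psi(x)|^2$ through the jump of $N_x$ at $\tau=\lambda^{1/2}$, which is exactly what you do, and your subtraction $U(\lambda^{1/2})-L(\lambda^{1/2})$ reproduces the stated constant. The two points you flag for care (basis-independence of the projector diagonal and the $\epsilon\to 0^+$ limit using continuity of the envelopes) are handled correctly and merely make explicit what the paper calls ``immediate.''
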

For numerical purposes we will use the bound
$$
   | \psi(x) | \leq \sqrt{ \frac{11}{2 L}(\lambda^{1/2} + \frac{5}{L})+\frac{2}{3}}
 $$
 where $L$ is twice the radius of injectivity (which equals to the length of the shortest closed geodesic).

Similarly, one gets a bound on the derivative of the eigenfunctions in the following way. Fix a point $x \in X$
and a unit vector $\mathbf{n}$. As above we can define
$$
 N^1_x(\tau)=\mathrm{sign}(\tau) \sum_{i} \mathrm{H}(\tau^2 - \lambda_i) | \mathbf{n} \phi_i(x) |^2.
$$
Using the pre-trace formula and finite propagation speed it can easily worked out that
cosine transform
of $(N^1_x)'(r)$ coincides on the interval $(-d(x),d(x))$ with the cosine transform of the function
$\tilde F'(r)$, where $\tilde F(t)$ is defined by
\begin{gather*}
 \tilde F(0)=0,\\
 \tilde F'(t)= \frac{1}{4 \pi} \mathrm{H}(t^2-1/4) | t |^3  \tanh(\pi \sqrt{t^2-1/4}).
\end{gather*}
Using 
$$
 \int_{1/2}^\infty  t^3 (\tanh(\pi \sqrt{t^2-1/4})-1) dt=-\frac{17}{960}
$$
we obtain in the same way as before
$$
 -\frac{1}{30} \frac{1}{4 \pi} \leq \mathrm{sign}(t) \tilde G(t) \leq 0,
$$
where
 $\tilde G(t)=\tilde F(t) - \mathrm{sign}(t) \frac{1}{16 \pi} t^4$.

Again the Fourier Tauberian theorem in \cite{MR1853753} implies

\begin{theorem} \label{esth2}
The function $N^1_x$ satisfies
\begin{gather*}
 N^1_x(r) \leq \frac{1}{16\pi}  r^4 + \frac{2 \tilde \nu^2 +  \pi \tilde \nu}{4 \pi^2 d(x)} \left(r + \frac{\tilde \nu}{d(x)} \right)^3+ \frac{1}{8\pi}\frac{1}{15},\\
 N^1_x(r) \geq \frac{1}{16\pi} r^4 -  \frac{\tilde \nu^2}{2 \pi^2 d(x)} \left(r + \frac{\tilde \nu}{d(x)} \right)^3- \frac{1}{8\pi}\frac{1}{15},\\
 \end{gather*}
\end{theorem}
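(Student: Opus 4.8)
The plan is to rerun the proof of Theorem~\ref{esth1} essentially verbatim, with the scalar local counting function $N_x$ replaced by the normal-derivative-weighted counting function $N^1_x$ and the model density $F'$ replaced by $\tilde F'$. Almost all of the needed structure has already been assembled in the discussion preceding the theorem: $N^1_x$ is an odd, non-decreasing function on $\rz$ of at most polynomial growth; by Selberg's pre-trace formula applied to $\mathbf{n}_x\mathbf{n}_y e(x,y,\tau)$ together with unit-speed finite propagation for $\cos(s\sqrt{\Delta})$, the cosine transform of $(N^1_x)'$ coincides on $(-d(x),d(x))$ with that of $\tilde F'$; and the remainder $\tilde G(t)=\tilde F(t)-\mathrm{sign}(t)\frac{1}{16\pi}t^4$ obeys the two-sided bound $-\frac{1}{30}\frac{1}{4\pi}\le\mathrm{sign}(t)\,\tilde G(t)\le 0$. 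Given these inputs, the proof reduces, exactly as for Theorem~\ref{esth1}, to one application of the Fourier Tauberian theorem (Theorem~1.3 of \cite{MR1853753}) plus bookkeeping of constants.

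First I would record the crude a priori bound $N^1_x(r)=O(r^4)$, which the Tauberian theorem requires as a hypothesis and which follows from the same pre-trace identity with any fixed smooth cutoff (or from any standard local Weyl-law estimate for $|\mathbf{n}\phi_i|^2$ on the diagonal). Next I would fix the test function. Because $N^1_x$ grows like $r^4$ rather than $r^2$, the mollifier must decay faster than the one used for $N_x$, so I would take $\rho=(\hat\phi)^2$ with $\phi$ the normalized first Dirichlet eigenfunction of $\frac{d^{2m}}{dx^{2m}}$ on $[-1/2,1/2]$ for a suitably large $m$, extended by zero to $\rz$, and let $\tilde\nu$ be the corresponding eigenvalue (the analogue of $\nu$). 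As in the proof of Theorem~\ref{esth1}, one forms $\rho_\delta$ and $\rho_{\delta,0}$, uses $\rho_{\delta,0}\ge 0$, $\|\rho_\delta\|_{L^1}=1$, and the one-sided estimates $(\rho_\delta*\tilde G)(r)\le\|\tilde G\|_\infty=\frac{1}{8\pi}\frac{1}{15}$ and $(\rho_{\delta,0}*\tilde G')(r)\le 0$, which are the gradient analogues of (\ref{esteem1})--(\ref{esteem2}).

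Feeding these into Theorem~1.3 of \cite{MR1853753}, together with the norm estimates playing the role of (2.9)--(2.10) there, with $T=d(x)$ and $\delta=\tilde\nu/d(x)$, yields the asserted bounds: the smoothed main term reproduces $\frac{1}{16\pi}r^4$ up to the $\tilde G$-correction, which contributes at most $\frac{1}{8\pi}\frac{1}{15}$; and the Tauberian error, which for degree-$4$ growth is of order $r^3/T$ plus lower powers of $r$, collects into $\frac{2\tilde\nu^2+\pi\tilde\nu}{4\pi^2 d(x)}\bigl(r+\frac{\tilde\nu}{d(x)}\bigr)^3$ on the upper side and $\frac{\tilde\nu^2}{2\pi^2 d(x)}\bigl(r+\frac{\tilde\nu}{d(x)}\bigr)^3$ on the lower side, with the same asymmetry (the extra $\pi\tilde\nu$ in the upper bound coming from the positive part of the one-sided estimate) that is already present in Theorem~\ref{esth1}.

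The step I expect to be the main obstacle is the pre-trace identity for $(N^1_x)'$: one must compute the local density of states on $\mathbb{H}$ for $|\mathbf{n}\phi|^2$ on the diagonal and verify that it is exactly $\tilde F'$, i.e.\ that differentiating the spherical-function resolvent kernel once in each variable and restricting to the diagonal multiplies the Plancherel density so as to turn $|t|\tanh(\pi\sqrt{t^2-1/4})$ into $|t|^3\tanh(\pi\sqrt{t^2-1/4})$ and to fix the leading constant $\frac{1}{16\pi}$; a normalization slip here would corrupt everything downstream. A secondary, purely technical point is checking that the form of the Fourier Tauberian theorem used in the degree-$2$ case still applies, with the chosen higher-order mollifier, to a degree-$4$ monotone function, and tracking the constants carefully enough to land precisely on the coefficients $\frac{2\tilde\nu^2+\pi\tilde\nu}{4\pi^2 d(x)}$ and $\frac{\tilde\nu^2}{2\pi^2 d(x)}$; everything else is routine.
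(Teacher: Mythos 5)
Your proposal matches the paper's argument: the paper proves Theorem \ref{esth2} exactly as you describe, by assembling the pre-trace/finite-propagation identity for the cosine transform of $(N^1_x)'$, the two-sided bound on $\tilde G$ obtained from $\int_{1/2}^\infty t^3(\tanh(\pi\sqrt{t^2-1/4})-1)\,dt=-\tfrac{17}{960}$, and then invoking Theorem 1.3 of \cite{MR1853753} with the higher-order mollifier (the paper takes $\tilde\nu=\nu_3$ from that reference, consistent with your choice of a larger $m$). The paper in fact gives no more detail than you do — it states the ingredients before the theorem and concludes with a one-line appeal to the Fourier Tauberian theorem — so your identification of the pre-trace normalization and the constant bookkeeping as the only nontrivial steps is accurate.
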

where $\tilde \nu$ is the same as $\nu_3$ in \cite{MR1853753} and satisfies the esimtate $\nu_3 \leq 6 \sqrt[6]{3} \leq 8$.

This results in the bound for the derivative of the eigenvalues
\begin{corollary} \label{lboundzwei}
 Let $\psi(x)$ be an eigenfunction of the Laplace operator with eigenvalue $\lambda$
 such that $\| \psi \|_{L^2}=1$ and let $n_x$ a unit tangent vector at $x \in X$.
 Then,
 $$
   |\mathbf{n} \psi(x) |^2 \leq \frac{4 \tilde \nu^2 +  \pi \tilde \nu}{4 \pi^2 d(x)} \left(r + \frac{\tilde \nu}{d(x)} \right)^3+ \frac{1}{4\pi}\frac{1}{15}.
 $$
\end{corollary}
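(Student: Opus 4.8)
The plan is to obtain this pointwise bound on $\mathbf{n}\psi(x)$ from Theorem \ref{esth2} in exactly the same way Corollary \ref{lbound} is obtained from Theorem \ref{esth1}. First I would complete the $L^2$-normalized eigenfunction $\psi$ to an orthonormal basis $(\phi_i)_{i\in\nz_0}$ of eigenfunctions of $\Delta$ on $X$, with $\phi_{i_0}=\psi$ for some index $i_0$ and $\lambda_{i_0}=\lambda$. Writing $r=\lambda^{1/2}$ and taking $\mathbf{n}=n_x$ as the fixed unit vector in the definition of $N^1_x$ preceding Theorem \ref{esth2}, the single summand $|\mathbf{n}\psi(x)|^2$ is one of the non-negative terms contributing to the jump of the monotone function $N^1_x$ at $\tau=r$. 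Hence, for every $\epsilon>0$ small enough that the interval $(\lambda-\epsilon,\lambda+\epsilon]$ contains no eigenvalue other than those equal to $\lambda$,
\[
 |\mathbf{n}\psi(x)|^2 \;\le\; N^1_x\big((\lambda+\epsilon)^{1/2}\big) - N^1_x\big((\lambda-\epsilon)^{1/2}\big).
\]

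Next I would estimate the right-hand side by applying the \emph{upper} bound of Theorem \ref{esth2} at $\tau=(\lambda+\epsilon)^{1/2}$ and the \emph{lower} bound at $\tau=(\lambda-\epsilon)^{1/2}$, and then let $\epsilon\to 0^+$. The key point is that the leading Weyl terms $\tfrac{1}{16\pi}\tau^4$ from the two bounds cancel in the limit, so only the correction terms survive. Carrying out this cancellation, the coefficient of $(r+\tilde\nu/d(x))^3$ becomes $\tfrac{2\tilde\nu^2+\pi\tilde\nu}{4\pi^2 d(x)} + \tfrac{\tilde\nu^2}{2\pi^2 d(x)} = \tfrac{4\tilde\nu^2+\pi\tilde\nu}{4\pi^2 d(x)}$, and the two constant contributions $\tfrac{1}{8\pi}\tfrac{1}{15}$ add to $\tfrac{1}{4\pi}\tfrac{1}{15}$, which is precisely the asserted inequality (with $r=\lambda^{1/2}$). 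If a fully numerical bound is wanted, one can then substitute the explicit estimate $\tilde\nu=\nu_3\le 6\sqrt[6]{3}\le 8$ from \cite{MR1853753} together with the lower bound $d(x)\ge L$, where $L$ is the length of a shortest closed geodesic, to obtain an estimate analogous to the one recorded after Corollary \ref{lbound}.

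I do not expect a genuine obstacle here: the substantive work is already contained in Theorem \ref{esth2} (hence in the Fourier Tauberian machinery of \cite{MR1853753} together with the finite-propagation-speed identification of the cosine transform of $(N^1_x)'$ on $(-d(x),d(x))$). The only points needing a little care are (i) justifying that $\psi$ may be realised as a member of an orthonormal eigenbasis and that the jump of $N^1_x$ dominates $|\mathbf{n}\psi(x)|^2$, which is immediate from the spectral decomposition and the non-negativity of all summands; and (ii) performing the $\epsilon\to0$ limit via the \emph{difference} of the upper and lower bounds, rather than bounding $N^1_x(\lambda^{1/2})$ directly, the latter of which would leave an uncontrolled $\tfrac{1}{16\pi}\lambda^2$ term.
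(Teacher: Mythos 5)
Your proposal is correct and is exactly the argument the paper intends (the paper gives no explicit proof, only the remark that Theorem \ref{esth2} "results in" the bound, in parallel with how Corollary \ref{lbound} follows from Theorem \ref{esth1}): bound the jump of $N^1_x$ at $r=\lambda^{1/2}$ by the difference of the upper and lower bounds, so the Weyl terms $\frac{1}{16\pi}r^4$ cancel and the coefficients combine to $\frac{2\tilde\nu^2+\pi\tilde\nu}{4\pi^2 d(x)}+\frac{\tilde\nu^2}{2\pi^2 d(x)}=\frac{4\tilde\nu^2+\pi\tilde\nu}{4\pi^2 d(x)}$ and $\frac{1}{8\pi}\frac{1}{15}+\frac{1}{8\pi}\frac{1}{15}=\frac{1}{4\pi}\frac{1}{15}$. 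The arithmetic and the limiting argument via $\lambda\pm\epsilon$ are both sound.
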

Again, for numerical purposes we will simply use the bound
$$
   |\mathbf{n} \psi(x) | \leq \sqrt{
   \frac{6}{L} \left(r + \frac{8}{L} \right)^3+ \frac{1}{190} }.
$$

\noindent{\bf Acknowledgements.} 
We would like to thank Peter Buser for very interesting discussions and also for helping us
to compute the Fenchel Nielsen coordinates of surfaces given in various parametrizations. We are very
grateful to Ralf Aurich for providing the eigenvalues of surfaces for testing purposes. Our thanks also go to
Alex Barnett, Frank Steiner and Andreas Str\"ombergsson for useful comments and stimulating discussions.

\bibliographystyle{amsalpha}


\newcommand{\etalchar}[1]{$^{#1}$}
\providecommand{\bysame}{\leavevmode\hbox to3em{\hrulefill}\thinspace}
\providecommand{\MR}{\relax\ifhmode\unskip\space\fi MR }
\providecommand{\MRhref}[2]{%
  \href{http://www.ams.org/mathscinet-getitem?mr=#1}{#2}
}
\providecommand{\href}[2]{#2}

\end{document}